\theoremstyle{plain}
  \newtheorem{theorem}{Theorem}[section]
  \newtheorem{lemma}[theorem]{Lemma}
  \newtheorem{proposition}[theorem]{Proposition}
  \newtheorem{conjecture}[theorem]{Conjecture}
  \newtheorem{corollary}[theorem]{Corollary}
\theoremstyle{remark} 
  \newtheorem*{remark}{Remark}
  \newtheorem*{definition}{Definition}
\theoremstyle{definition}
\numberwithin{equation}{section}
\DeclareMathOperator*{\argmin}{argmin}
\DeclareMathOperator*{\argmax}{argmax}
\DeclareMathOperator{\vol}{vol}
\DeclareMathOperator*{\intr}{int}
\DeclareMathOperator*{\Tr}{tr}
\DeclareMathOperator{\Sym}{Sym}
\DeclareMathOperator{\im}{Im}
\DeclareMathOperator{\conv}{conv}
\DeclareMathOperator{\Aut}{Aut}
\newcommand{\R}{\mathbb R}
\newenvironment{manualtheorem}[1]{%
	\manualtheoreminner
}{\endmanualtheoreminner}
\newenvironment{manualprop}[1]{%
	\manualpropinner
}{\endmanualpropinner}
\begin{document}

\title{Some new positions of maximal volume of convex bodies}

\author{Shiri Artstein-Avidan}
\address{School of Mathematical Sciences, Tel Aviv University, Tel Aviv, 66978, Israel.}
\email{shiri@tauex.tau.ac.il}
\thanks{Supported in part by the 
	European Research Council (ERC) under the European Union’s Horizon 2020
	research and innovation programme (grant agreement No 770127), and in part by
	ISF grant 784/20}
\author{Eli Putterman}
\email{putterman@mail.tau.ac.il}


\subjclass[2010]{52A23, 52A40.}

\date{\today}


\keywords{positive John position, maximal intersection position}

\begin{abstract}
In this paper, we extend and generalize several previous works on maximal-volume positions of convex bodies. First, we analyze the maximal positive-definite image of one convex body inside another, and the resulting decomposition of the identity. We discuss continuity and differentiability of the mapping associating a body with its positive John position. We then introduce the saddle-John position of one body inside another, proving that it shares some of the properties possessed by the position of maximal volume, and explain how this can be used to improve volume ratio estimates. We investigate several examples in detail and compare these positions. Finally, we discuss the maximal intersection position of one body with respect to another, and show the existence of a natural decomposition of identity associated to this position, extending previous work which treated the case when one of the bodies is the Euclidean ball. 
\end{abstract}

\maketitle

\baselineskip=16.4pt
\parskip=3pt

\section{Introduction}

The theme of the present paper is the investigation of positions of convex bodies in $\R^n$ which
are relatives of the well known John and L\"owner positions. The John
position of a convex body $K$ is defined as the affine image $K_J$ of $K$ 
for which the ellipsoid of maximal volumes contained in $K_J$ is the unit
Euclidean ball, while the L\"owner position of $K$ is its affine image $K_L$
such that the ellipsoid of minimum volume containing $K_L$ is the unit Euclidean ball. 

John's theorem from 1948 \cite{J} states that if $K$ is in John position, the intersection of its boundary with $S^{n - 1}$ supports an isotropic measure in a sense to be made precise shortly (see the statement of Theorem \ref{gen_john_thm} and the paragraph below); the converse statement was proven many years later by Ball \cite{B}.

Replacing the Euclidean ball by some other convex body, one may define
the maximal-volume image of a convex body $L$ inside a convex body $K$ as the
affine image of $L$ contained in $K$ with maximal volume among all such images; 
we say that $L$ is in maximal volume position inside $K$ if $L$ is its own maximal-volume image inside $K$.
This position was studied by several authors, among them Giannopoulos, Perissinaki and Tsolomitis \cite{GPT}, Bastero and Romance \cite{BR}, Gordon, Litvak, Meyer and Pajor \cite{GLMP}, and Gruber and Schuster \cite{GS}.

Denote by $\mathcal K^n$ the set of convex bodies in $\mathbb R^n$, $\mathcal K^n_o, \mathcal K^n_s \subset \mathcal K^n$ the set of convex bodies with the origin in their interior and the set of centrally symmetric convex bodies, respectively. It is well-known that when $L$ is in maximal
volume position inside $K$ then there is a decomposition of the identity
supported on contact pairs of $K$ and $L$ (for the definition of a contact pair, see \S \ref{section:pos_john}). More precisely, we have 

\begin{theorem}\label{gen_john_thm}
	Let $L \in \mathcal K^n$, $K \in \mathcal K^n_o$ with $K \subset L$, and suppose $L$ has maximal volume among all affine images of $L$ contained in $K$. 
	Then there exist contact pairs $(x_1, y_1), \ldots, (x_m, y_m)$  of $K, L$, that is,  $x_i \in \partial L \cap \partial K$, $y_i \in \partial L^\circ \cap \partial K^\circ$, and $\langle x_i, y_i\rangle = 1$, and constants $c_1,\ldots,c_m > 0$ such that:
	\begin{enumerate}
		\item $\sum_{i = 1}^m c_i y_i = 0$.
		\item $\sum_{i = 1}^m c_i (x_i \otimes y_i) = I_n$.
	\end{enumerate}
\end{theorem}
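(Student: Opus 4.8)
The plan is to run the classical Lagrange‑multiplier (Farkas) argument behind John's theorem, now in the space of affine maps, as in the cited works on the maximal‑volume position. Note that $L\subseteq K$, since $L$ is among the affine images of $L$ contained in $K$, and by hypothesis $L$ itself realizes the maximum. For a matrix $A\in\R^{n\times n}$ and a vector $b\in\R^n$, consider the affine perturbations $T_t(x)=(I_n+tA)x+tb$; for small $t$ these are invertible affine maps and $\vol(T_tL)=\abs{\det(I_n+tA)}\,\vol(L)=(1+t\,\Tr A+O(t^2))\,\vol(L)$. Thus $\Tr A$ is, up to the positive factor $\vol(L)$, the derivative of the objective in the direction $(A,b)$, while the translation $b$ does not affect the volume but does enter the containment $T_t(L)\subseteq K$. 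The aim is to deduce that $(I_n,0)$ lies in the convex cone generated by the pairs $\bigl(x\otimes y,\,y\bigr)$ indexed by contact pairs $(x,y)$ of $K$ and $L$ (with $\otimes$ as in the statement), and then to invoke Carathéodory; conditions (1) and (2) are precisely the vector and matrix coordinates of such a conic representation.

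The technical core — and the step I expect to be the main obstacle, because $K$ and $L$ need be neither smooth nor strictly convex — is the feasibility lemma that replaces the naive first‑order computation: \emph{if $\langle Ax+b,\,y\rangle<0$ for every contact pair $(x,y)$ of $K$ and $L$, then $T_t(L)\subseteq K$ for all sufficiently small $t>0$}. I would prove this by contradiction and compactness. If it failed, choose $t_k\downarrow 0$ and $z_k\in L$ with $w_k:=T_{t_k}(z_k)\notin K$; since $0\in\intr K$, there is then $y_k\in\partial K^\circ$ with $\langle w_k,y_k\rangle>1$. Pass to a subsequence so that $z_k\to z^*\in L$, $w_k\to z^*$ (because $T_{t_k}\to I_n$ uniformly on the bounded set $L$), and $y_k\to y^*\in\partial K^\circ$. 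Then $\langle z^*,y^*\rangle\ge 1$; but $y^*\in\partial K^\circ$ and $z^*\in L\subseteq K$ give $\langle z^*,y^*\rangle\le 1$, so $\langle z^*,y^*\rangle=1$, and since $\sup_{x\in L}\langle x,y^*\rangle\le\sup_{x\in K}\langle x,y^*\rangle=1$ while $z^*\in L$ attains $1$, one gets $y^*\in\partial L^\circ$ and $z^*\in\partial L\cap\partial K$ — that is, $(z^*,y^*)$ is a genuine contact pair. On the other hand $1<\langle w_k,y_k\rangle=\langle z_k,y_k\rangle+t_k\langle Az_k+b,\,y_k\rangle\le 1+t_k\langle Az_k+b,\,y_k\rangle$, using $\langle z_k,y_k\rangle\le 1$ (valid since $y_k\in K^\circ$ and $z_k\in L\subseteq K$); hence $\langle Az_k+b,\,y_k\rangle>0$ for every $k$, and in the limit $\langle Az^*+b,\,y^*\rangle\ge 0$, contradicting the lemma's hypothesis at the contact pair $(z^*,y^*)$. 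It is precisely here that the containment $L\subseteq K$ and the definition of a contact pair are used; the rest is routine.

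Granting the lemma, the conclusion follows by convex duality. If some $(A,b)$ satisfied $\Tr A>0$ and $\langle Ax+b,\,y\rangle\le 0$ at all contact pairs, then $A-\varepsilon I_n$ still has positive trace for small $\varepsilon>0$ and, using $\langle x,y\rangle=1$ on contact pairs, satisfies $\langle(A-\varepsilon I_n)x+b,\,y\rangle\le-\varepsilon<0$ there; the lemma would then give $T_t(L)\subseteq K$ for small $t>0$ with $\vol(T_tL)>\vol(L)$, contradicting maximality. Hence the linear functional $(A,b)\mapsto\Tr A=\langle(I_n,0),(A,b)\rangle$ is $\le 0$ on the dual cone of $\mathcal C$, the convex cone generated by $P:=\{(x\otimes y,\,y):(x,y)\text{ a contact pair of }K,L\}$, so the bipolar theorem yields $(I_n,0)\in\mathcal C^{**}=\overline{\mathcal C}$. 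Now $P$ is compact (the continuous image of the set of contact pairs, which is closed and bounded — bounded because a contact pair has $x\in\partial K$ and $y\in\partial K^\circ$), and $0\notin\conv P$ since pairing the matrix part of any convex combination with $I_n$ gives $\sum_i\lambda_i\langle x_i,y_i\rangle=1\neq 0$; therefore $\mathcal C$ is already closed, and $(I_n,0)=\sum_{i=1}^m c_i\,(x_i\otimes y_i,\,y_i)$ for finitely many contact pairs $(x_i,y_i)$ and weights $c_i>0$. Reading off the two coordinates gives (1) and (2) (and in particular $m\ge 1$); Carathéodory's theorem in $\R^{n^2+n}$ bounds $m$ if desired.
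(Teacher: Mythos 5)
Your proof is correct. One small point first: the hypothesis in the statement reads ``$K \subset L$,'' which is evidently a typo for $L \subset K$ (as the maximality hypothesis forces), and you silently and correctly work with $L \subseteq K$.

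As for the route: the paper does not actually prove Theorem \ref{gen_john_thm} --- it is quoted as well known, with references to \cite{GPT,BR,GLMP,GS} --- and the closest argument it does spell out is the necessity half of Theorem \ref{pos_john_contact}, which invokes John's Lagrange-multiplier theorem for infinitely many constraints (Theorem \ref{john_opt}) as a black box, applied over $\Sym^{n\times n}\times\R^n$; the general affine case would run identically over $\mathcal M^{n\times n}\times\R^n$, producing $x_i\otimes y_i$ without symmetrization. Your proof replaces that black box with two self-contained ingredients: (i) the compactness-based feasibility lemma (if $\langle Ax+b,y\rangle<0$ at every contact pair then $(I+tA)L+tb\subseteq K$ for small $t>0$), whose proof by extracting a limiting contact pair from a sequence of violations is exactly the first-order analysis that John's theorem packages, and which correctly covers the degenerate case of no contact pairs; and (ii) conic duality --- the bipolar theorem together with the observation that $\conv P$ is compact and misses the origin (since the trace of the matrix part of any convex combination equals $1$), so the cone over $P$ is closed and $(I_n,0)$ lies in it, not merely in its closure. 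The $\varepsilon I_n$ perturbation correctly bridges the gap between the non-strict inequalities defining the dual cone and the strict inequalities required by the feasibility lemma. This is essentially the Giannopoulos--Perissinaki--Tsolomitis style of argument; what it buys over the paper's presentation is that nothing is cited as a black box, at the cost of redoing by hand the separation/Carath\'eodory step that Theorem \ref{john_opt} would supply. Both are sound; I find no gap in yours.
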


Here $x\otimes y$ stands for the rank one matrix $xy^T$. 

In the classical John's theorem, in which $L$ is taken to be the Euclidean ball, the contact pairs are of the form $(x_i, x_i)$ and one says that the measure giving mass $c_i$ to the point $x_i$ on the sphere is isotropic (see \cite[Section 2.1]{AAGM}). The fact that solutions of extremal problems give rise to isotropic measures in many different forms is well-known, see e.g. \cite{GM}. 

As we stated, it was proven by Ball that when $L = B^n_2$ and $L \subset K$, i.e., in the setting of the classical John theorem, the existence of an isotropic measure supported on contact points is not only implied by, but also implies that $K$ is in John position. (The same equivalence holds for L\"owner position if we assume $K \subset B^n_2$.) This is no longer the case in the general setting of pairs of bodies, as can be seen even by two-dimensional examples such as $B(\ell_1^n) \subset B(\ell_\infty^n)$. However, one does obtain an ``if and only if'' characterization of the position by the existence of a decomposition of the identity when considering a modification of the above position, namely the \textit{positive John position}:

\begin{definition}
	Let $L, K$ be convex bodies with nonempty interior. We define a positive image of $L$ in $K$ to be a set of the form $PL + z$ contained in $K$, with $z \in \mathbb R^n$ and $P$ a positive-definite matrix. We say that {\em $L$ is in positive John position in $K$}, or that {\em $K$ is in positive John position with respect to $L$}, if $L \subset K$ and $L$ has maximal volume among all positive images of $L$ contained in $K$.
\end{definition}

Unlike the position of maximal volume, the positive John position is easily shown to be unique (see Proposition \ref{pos_john_ex}). Also note that when $L$ is a Euclidean ball, positive John position with respect to $L$ is precisely the usual John position.
 
The analogue of John's theorem for positive John position characterizes the positive John position of $K$ with respect to $L$ in terms of contact pairs:

\begin{theorem}\label{pos_john_contact} Let $K \in \mathcal K^n_o$, $L \in \mathcal K^n$. Then $K$ is in positive John position with respect to $L$ if and only if $L \subset K$ and there exist contact pairs $(x_1, y_1), \ldots, (x_m, y_m)$  of $K, L$ and $c_1, \ldots, c_m > 0$ such that:
	\begin{align}
	0 &= \sum_{i = 1}^m c_i y_i,   \\
	I_n &= \sum_{i = 1}^m c_i (x_i \otimes y_i + y_i\otimes x_i).   
	\end{align}
\end{theorem}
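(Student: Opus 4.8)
The plan is to prove the two implications by entirely different arguments. The ``if'' direction is a short computation: assuming $L\subset K$ together with the data $(x_i,y_i)$, $c_i>0$ of the statement, let $PL+z\subset K$ be any positive image of $L$ inside $K$; I will show $\det P\le 1$, which gives that $L$ is volume-maximal among its positive images in $K$. Since $y_i\in\partial L^\circ\cap\partial K^\circ$ one has $h_K(y_i)=1$, so testing $PL+z\subset K$ on $y_i$ and using that $P$ is symmetric and $x_i\in L$ yields
\[
\langle x_i,Py_i\rangle+\langle z,y_i\rangle\ \le\ h_L(Py_i)+\langle z,y_i\rangle\ =\ h_{PL+z}(y_i)\ \le\ h_K(y_i)\ =\ 1 .
\]
Multiplying by $c_i$ and summing: the term $\sum_i c_i\langle z,y_i\rangle=\langle z,\sum_i c_iy_i\rangle$ vanishes by the first identity of the theorem; taking traces in the second identity gives $\sum_i c_i=n/2$, and since $\sum_i c_i\,y_i\otimes x_i=I_n-\sum_i c_i\,x_i\otimes y_i$ while $\Tr(P\sum_i c_i\,x_i\otimes y_i)=\Tr(P\sum_i c_i\,y_i\otimes x_i)$ by symmetry of $P$, one gets $\sum_i c_i\langle x_i,Py_i\rangle=\Tr(P\sum_i c_i\,y_i\otimes x_i)=\tfrac12\Tr P$. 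Hence $\tfrac12\Tr P\le n/2$, i.e.\ $\Tr P\le n$; as $P$ is positive definite, AM--GM gives $\det P\le(\Tr P/n)^n\le 1$, with equality only for $P=I_n$.

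For the ``only if'' direction, assume $K$ is in positive John position with respect to $L$, so $L\subset K$. I would recast the conclusion as
\[
(I_n,0)\ \in\ \mathcal C\ :=\ \operatorname{cone}\bigl\{\,(x\otimes y+y\otimes x,\ y)\ :\ (x,y)\ \text{a contact pair of}\ K,L\,\bigr\}\ \subset\ \Sym(n)\times\R^n,
\]
since a representation $(I_n,0)=\sum_i c_i(x_i\otimes y_i+y_i\otimes x_i,\ y_i)$ with $c_i>0$ is precisely the sought pair of identities (and then only finitely many contact pairs are needed, by Carath\'eodory in the finite-dimensional space $\Sym(n)\times\R^n$). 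The set of contact pairs is compact, and every generator of $\mathcal C$ has first coordinate of trace $2\langle x,y\rangle=2$; hence $0\notin\conv$ of the generators, so $\mathcal C$ is a closed convex cone. Suppose, for contradiction, that $(I_n,0)\notin\mathcal C$. By the separation theorem there is $(T,w)\in\Sym(n)\times\R^n$ with $\Tr T<0$ and $2\langle Tx,y\rangle+\langle w,y\rangle\ge 0$ for every contact pair $(x,y)$. Replacing $T$ by $T_\delta:=T+\delta I_n$ for small $\delta>0$ (keeping $\Tr T_\delta<0$) and using $\langle x,y\rangle=1$ makes these inequalities uniformly strict: $2\langle T_\delta x,y\rangle+\langle w,y\rangle\ge 2\delta$ for all contact pairs. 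Now I produce a better competitor: for small $\epsilon>0$ put $P_\epsilon:=I_n-\epsilon T_\delta$ (positive definite) and $z_\epsilon:=-\tfrac{\epsilon}{2}w$, so $\det P_\epsilon=1-\epsilon\Tr T_\delta+O(\epsilon^2)>1$ and $P_\epsilon L+z_\epsilon$ has volume exceeding $\vol(L)$. It remains to check $P_\epsilon L+z_\epsilon\subset K$ for $\epsilon$ small.

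This last verification is the crux. If it failed along some $\epsilon_k\downarrow 0$, there would be $u_k\in S^{n-1}$ with $h_L((I_n-\epsilon_k T_\delta)u_k)+\langle z_{\epsilon_k},u_k\rangle>h_K(u_k)\ge h_L(u_k)$; passing to $u_k\to u_*$ and letting $k\to\infty$ forces $h_L(u_*)=h_K(u_*)$, a value that is positive because $0\in\intr K$. Put $\psi(u,t):=h_L((I_n-tT_\delta)u)$, which is jointly continuous and, for fixed $u$, convex in $t$; the failure reads $\bigl(\psi(u_k,\epsilon_k)-\psi(u_k,0)\bigr)/\epsilon_k>\tfrac12\langle w,u_k\rangle$. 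Since the difference quotient $t\mapsto\bigl(\psi(u_k,t)-\psi(u_k,0)\bigr)/t$ is nondecreasing, I may replace $\epsilon_k$ by any fixed $s>0$ while keeping the lower bound, then let $k\to\infty$ (joint continuity) and $s\downarrow 0$ to get $\tfrac12\langle w,u_*\rangle\le\partial_t^+\psi(u_*,0)=h'_L(u_*;-T_\delta u_*)$, where $h'_L(u_*;\cdot)$ is the support function of the face $F$ of $L$ exposed by $u_*$. But every $x\in F$ forms a contact pair of $K,L$ together with $y_*:=u_*/h_L(u_*)$, so the uniformly strict inequality yields $\langle T_\delta x,u_*\rangle\ge\delta h_L(u_*)-\tfrac12\langle w,u_*\rangle$ for all $x\in F$, i.e.\ $h'_L(u_*;-T_\delta u_*)=-\inf_{x\in F}\langle T_\delta x,u_*\rangle\le\tfrac12\langle w,u_*\rangle-\delta h_L(u_*)$. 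The two bounds give $0\le-\delta h_L(u_*)<0$, a contradiction. Hence $P_\epsilon L+z_\epsilon\subset K$ for small $\epsilon$, contradicting positive John position; therefore $(I_n,0)\in\mathcal C$.

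The main obstacle is exactly this inclusion $P_\epsilon L+z_\epsilon\subset K$: at a contact normal $h_L$ need not be differentiable (let alone $C^{1,1}$), so a naive first- or second-order Taylor argument fails. What I would use instead is the combination of a compactness step that localizes any violation to the compact set of contact normals, the harmless strictification $T\rightsquigarrow T+\delta I_n$, and the one-variable convexity of $t\mapsto h_L((I_n-tT_\delta)u)$, whose monotone difference quotients substitute for the missing differentiability. The remaining ingredients --- closedness of $\mathcal C$, Carath\'eodory, AM--GM --- are routine.
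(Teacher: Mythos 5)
Your proof is correct, but it is organized quite differently from the paper's. For necessity, the paper invokes John's infinite-constraint Lagrange multiplier theorem as a black box (objective $\det P$ on $\mathcal P^n\times\R^n$, constraints indexed by $\partial L\times\partial K^\circ$), which hands over the contact pairs and weights immediately; you instead run a separation argument on the closed cone generated by $\bigl(x\otimes y+y\otimes x,\,y\bigr)$ and then must \emph{build} a better competitor $P_\epsilon L+z_\epsilon$ and verify its inclusion in $K$ by hand. That verification is exactly the delicate point that the black box hides, and your treatment of it --- localizing any violation to a limiting direction $u_*$ with $h_L(u_*)=h_K(u_*)$ via compactness, strictifying the separating functional by $T\rightsquigarrow T+\delta I_n$, and exploiting the monotone difference quotients of the convex function $t\mapsto h_L((I_n-tT_\delta)u)$ together with the identity $h_L'(u_*;\cdot)=h_{F(L,u_*)}(\cdot)$ --- is sound and makes the argument fully self-contained. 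For sufficiency, the roles are reversed: the paper argues by contraposition (a strictly better positive image yields, via first-order conditions at contact pairs, a functional separating $(\tfrac{I_n}{n},0)$ from $\conv(C_{K,L})$, with AM--GM giving $\tfrac{1}{n}\Tr P>1$), whereas you give the direct Ball-style computation: test $PL+z\subset K$ against each $y_i$, sum with weights to get $\tfrac12\Tr P\le\sum c_i=\tfrac n2$, and conclude $\det P\le 1$ by AM--GM. The net trade-off is that your route avoids citing the optimization theorem entirely and yields a cleaner sufficiency proof, at the price of the more technical inclusion check in the necessity direction; the paper's route is shorter where yours is long and longer where yours is short. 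Both ultimately rest on the same two pillars (hyperplane separation in $\Sym^{n\times n}\times\R^n$ and the arithmetic--geometric mean inequality), just deployed on opposite implications.
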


This theorem has been proven by different methods in \cite{BR,GLMP}; for completeness, we also provide a proof. 

Note that the representation of the identity associated with the ``contact pairs" in
the above theorem differs from the classical one in Theorem \ref{gen_john_thm} by a
symmetrization, in a sense doubling the number of rank-one matrices which are used to obtain the identity.

Our first goal in the present work is to further investigate the positive John position. First of all, somewhat counterintuitively, we note that the positive John image of $L$ in $K$ might itself not be in a positive John position, due to the fact that a product of two positive-definite matrices may fail to be positive-definite. Instead, we have the following:

\begin{manualprop}{\ref{prop:pos_john_im}}
	For any convex bodies $K, L \in \mathcal K^n$, $L' = PL + z$ is the positive John image of $L$ in $K$ if and only if $P^{\frac{1}{2}} L + P^{-\frac{1}{2}} z$ is in positive John position inside $P^{-\frac{1}{2}} K$.
\end{manualprop}

Next, it seems that continuity and smoothness properties of the mapping associating a body with its John ellipsoid, or, more generally, associating a body with its positive John image within another body, have not previously been investigated in depth. We give a few results in this direction. First, letting  $\mathcal P^n$ denote the cone of positive-definite matrices, we show:

\begin{manualprop}{\ref{pos_john_contin}} The function $(P^*, z^*): \mathcal K^n \times \mathcal K^n \to \mathcal P^n \times \mathbb R^n$, defined such that $P^*(K, L) L + z^*(K, L)$ is the positive John image of $L$ in $K$, is continuous with respect to the Hausdorff metric on $\mathcal K^n$.
\end{manualprop}
 
We also show, in Theorem \ref{pos_john_sm}, that under some technical assumptions, fixing a polytope $L$ and a smooth $C^{k}$ body $K$, the mapping associating an orthogonal transformation $U$ to the the positive John image of the rotation $UL$ inside $K$, is $C^{k-1}$ smooth.

We define the \textit{positive John family} of $L$ in $K$ as the set of positive John positions of all orthogonal images of $L$ with respect to $K$. One distinguished member of this family is the (not necessarily unique) maximal affine position of $L$ inside $K$ from Theorem \ref{gen_john_thm}. We define a new distinguished position within this family, which we call \text{saddle-John position}, in a similar way: this is the positive John image of minimal volume among the positive John images of all $O_n$-images of $L$ inside $K$. (Again, this is not necessarily unique.) The classical position of maximal volume corresponds to the ``best" orthogonal image of $L$ inside $K$, so it and the saddle-John position are both extremal within the positive John family. This extremality is the key property which can be used to show that the saddle-John position shares with the classical maximal volume position the property of supporting a genuine decomposition of the identity on contact pairs (without taking the symmetric part), that is:

\begin{theorem}\label{saddle_contact}
	Let $K, L \subset \mathcal K^n$ (the set of convex bodies in $\mathbb R^n$ containing the origin in their interior), $L \in \mathcal K^n$, and suppose $L$ is in saddle-John position inside $K$. Then there exist contact pairs $(x_1, y_1), \ldots, (x_m, y_m)$  of $K, L$ and $c_1,\ldots,c_m > 0$ such that:
	\begin{enumerate}
		\item $\sum_{i = 1}^m c_i y_i = 0$.
		\item $\sum_{i = 1}^m c_i (x_i \otimes y_i) = I_n$.
	\end{enumerate} 
\end{theorem}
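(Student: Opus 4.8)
The plan is to leverage the variational characterization of the saddle-John position directly, comparing it with the known decomposition for the positive John position (Theorem~\ref{pos_john_contact}) and using the extremality over the orthogonal group $O_n$ to ``break the symmetry.'' By Theorem~\ref{pos_john_contact}, since $L$ is in particular in positive John position inside $K$ (it is the positive John image of $UL$ for $U = I$ among this family, and being the minimal-volume member it is certainly a valid positive John position), there exist contact pairs $(x_i,y_i)$ and weights $c_i>0$ with $\sum c_i y_i = 0$ and $\sum c_i(x_i\otimes y_i + y_i\otimes x_i) = I_n$. The first condition is exactly condition~(1) we want, so the entire content is upgrading the symmetrized decomposition to the genuine one $\sum c_i(x_i\otimes y_i) = I_n$. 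The key extra information is that $L$ also minimizes volume over the family $\{$positive John image of $UL$ in $K : U\in O_n\}$.

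First I would set up the first-order condition for this minimization over $O_n$. Fix a skew-symmetric matrix $A$ (tangent vector to $O_n$ at $I$) and consider the curve $U_t = e^{tA}$. Let $P_t L_t + z_t$ be the positive John image of $U_t L$ inside $K$, where $L_t = U_t L$; its volume is $\det(P_t)\cdot\vol(L)$. By the envelope/first-order optimality at $t=0$, the derivative of $\log\det P_t$ vanishes (since $L$ is a minimizer, and using that $t=0$ gives $P_0 = I$ after absorbing the rotation appropriately — here is where Proposition~\ref{prop:pos_john_im} and the normalization that $L$ itself is already the positive John image must be used carefully). Now I would combine this with the first-order conditions defining the positive John image $P_t$ itself: differentiating the relation ``$P_t L_t + z_t$ is in positive John position in $K$'' and using the Lagrange/contact-pair description, one obtains that the first variation of $\log\det P_t$ is a linear functional in $A$ expressible through the contact data, something of the shape $\sum_i c_i \langle (x_i\otimes y_i) , A\rangle$ up to the derivative of the rotation acting on the $x_i$ or $y_i$. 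Setting this equal to zero for every skew-symmetric $A$ forces the antisymmetric part of $\sum_i c_i (x_i\otimes y_i)$ to vanish, i.e. $\sum_i c_i(x_i\otimes y_i) = \sum_i c_i(y_i\otimes x_i)$.

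Once the antisymmetric part vanishes, we are done: combining $\sum_i c_i(x_i\otimes y_i) = \sum_i c_i(y_i\otimes x_i)$ with the symmetrized identity $\sum_i c_i(x_i\otimes y_i + y_i\otimes x_i) = I_n$ gives $2\sum_i c_i(x_i\otimes y_i) = I_n$, so rescaling the $c_i$ by a factor of $2$ yields conditions~(1) and~(2) simultaneously (note condition~(1), $\sum c_i y_i = 0$, is unaffected by rescaling and was already available).

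The main obstacle I anticipate is the bookkeeping in the first-order computation: one must differentiate a composition of two extremal problems — the inner positive John optimization (whose optimum $P_t$ depends on $t$ only implicitly) and the outer rotation — and correctly identify which argument the contact pairs ``see'' the perturbation through. The clean way to handle this is an envelope-theorem argument: because $P_t$ is itself defined by a maximization (of $\log\det P$ subject to $P U_t L + z \subset K$), the derivative of the optimal value with respect to the external parameter $t$ equals the partial derivative of the Lagrangian, so only the explicit dependence of the constraint region on $U_t = e^{tA}$ contributes, and that dependence is linear in $A$ through precisely the contact pairs with their multipliers $c_i$. Making the regularity needed for this envelope argument rigorous (e.g.\ that the optimal multipliers can be chosen to vary measurably/continuously, or passing to a subsequential limit of difference quotients) is the only genuinely delicate point; the algebra afterward is immediate.
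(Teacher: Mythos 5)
Your core computation is exactly the paper's: apply the envelope theorem to the inner optimization defining $P^*(U)$ along a one-parameter subgroup $e^{tA}$, use the first-order condition $\frac{d}{dt}\big|_{t=0}\log\det P^*(e^{tA})=0$ coming from minimality over $O_n$, and read off that $\bigl\langle A,\sum c_i(x_i\otimes y_i)\bigr\rangle=0$ for every antisymmetric $A$, so that $\sum c_i(x_i\otimes y_i)$ is symmetric and hence equals its symmetric part $I_n$ (up to the normalization of the $c_i$). That part of your argument is correct and is precisely how the paper proceeds.

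The genuine gap is the regularity step you flag at the end but do not fill, and it cannot be dismissed as bookkeeping. The function $U\mapsto\det P^*(U)$ can genuinely fail to be differentiable --- the paper exhibits a two-dimensional example (a modified cross-polytope inside a suitably perturbed disk) where $\det P^*(R_\theta)$ equals $1$ for $\theta\le 0$ and $1-\Omega(\theta)$ for $\theta>0$ --- so one cannot simply assert that the first-order condition holds at a minimizer, nor does the envelope theorem apply without the binding constraints being regular (linearly independent gradients), which is exactly the uniqueness hypothesis of Theorem \ref{pos_john_sm}. Your proposed fixes (measurable/continuous selection of multipliers, subsequential limits of difference quotients) do not obviously produce the needed identity. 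The paper instead runs the envelope argument only for the ``nice'' case ($K$ of class $C^2_+$, $L$ a polytope, multipliers unique) and then handles general $K,L$ by a sandwiching construction: given a \emph{minimal} set of contact pairs $(x_i,y_i)$ supporting the symmetrized decomposition, one builds $L'\subset L\subset K\subset K'$ with $K'$ smooth, $L'$ a polytope, and $\partial L'\cap\partial K'$ consisting exactly of the $x_i$; one then checks that $L'$ is still in saddle-John position inside $K'$ (using $\det Q^*\ge\det P^*$ and the characterization of positive John position by contact pairs) and that minimality of the contact set forces uniqueness of the multipliers, so the smooth case applies and yields the unsymmetrized decomposition for the original pair. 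This construction is the substantive half of the paper's proof and is missing from your proposal.
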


Gordon, Litvak, Meyer, and Pajor showed (settling a conjecture of Gr\"unbaum) that for any two convex bodies $K, L$, there exist affine images $K', L'$ respectively such that $L' \subset K' \subset -nL'$ \cite[Theorem 5.1]{GLMP}. In fact, the position of maximal volume of $L$ in $K$ satisfies this condition, after possibly translating $K$ and $L$. This result has applications to bounding the product of the volume ratios of two convex bodies, where the volume ratio $vr(K, L)$ is defined as the infimum of $\left(\frac{\vol(K)}{\vol(L')}\right)^{\frac{1}{n}}$ over all affine images $L'$ of $L$ contained in $K$; in particular, we immediately obtain $vr(K, L) vr(L, K) \le n$.

The crucial tool used in the proof of \cite[Theorem 5.1]{GLMP} is the existence of a decomposition of the identity supported on contact pairs of the position of maximal volume. We use the fact that saddle-John position yields a decomposition of the identity with precisely the same properties in order to show:

\begin{manualprop}{\ref{saddle_dilate}} Suppose that $L_s = PUL + z$ is the saddle-John image of $L$ inside $K$. Then there exists $a \in \mathbb R^n$ such that $K - a \subset -n(L_s - a)$.
\end{manualprop}

Since, for given $K, L$, the saddle-John image $L_{saddle}$ of $L$ in $K$ will usually have smaller volume than the maximal-volume image $L_{max}$ (and often much smaller volume), the upper bound on $vr(K, L)$ implied by Proposition \ref{saddle_dilate}, namely, $n\frac{\vol(L_{saddle})}{\vol(K)}$, may be much better than the bound on $vr(K, L)$ which follows from the inclusion $K \subset -nL_{max}$ proven by \cite{GLMP}. For instance, if we take $L = B^n_1$, $K = B^n_\infty$, the fact that $L$ is in saddle-John position in $K$ implies $K \subset n L$, which implies (given the fact that $\left(\frac{\vol(K)}{\vol(L)}\right)^{\frac{1}{n}} = \Theta(n)$) that $vr(K, L) \le c$ for $c$ a universal constant. On the other hand, the maximal volume position $L_{max}$ of $L$ has volume of order $\Theta(\sqrt n)^n \vol(L)$, so using the fact that $K \subset n L_{max}$ would give the much worse estimate $vr(K, L) \le O(\sqrt n)$. (For proofs of the above assertions regarding the position of maximal volume and the saddle-John position of $B^n_1$ and $B^n_\infty$, see \S \ref{subsec:saddle_examples}.)

The notion of the positive John family, and of saddle-John position in particular, raises many interesting questions, for instance: how different are the volumes of the saddle-John and the position of maximal volume of $L$ inside $K$? How does the positive John position of a random rotation of $K$ inside $L$ compare to the two extremal positive John positions? We begin to examine these questions for specific examples (the $\ell^1$- and $\ell^\infty$-balls, respectively), but even in these cases our understanding is not complete, and much room remains for further work.

Next, it turns out that when one of the bodies is an ellipsoid, as one may expect, the situation simplifies and all the elements in the positive John family of a given body have the same volume. While this fact is not particularly surprising, proving it does involve some careful analysis, and we show:

\begin{manualprop}{\ref{prop-PJPin-ellipsoid}} Let $P \in \mathcal P^n$ be a positive matrix, $E = P B^n_2$ the corresponding ellipsoid, and $L$ an arbitrary centrally symmetric convex body. Then $E$ is in positive John position with respect to $L$ if and only if $B^n_2$ is in positive John position with respect to $P^{-1} L$, i.e., $P^{-1} L$ is in L\"owner position. In particular, all the bodies in the positive John family of $L$ inside $E$ have the same volume.
\end{manualprop}

It seems likely that this property is unique to ellipsoids, although we leave this for future research.

In the final section of the paper, we switch gears and discuss the maximal intersection position of two convex bodies, which generalizes the position of maximal volume: we say that $K$ and $L$ are in \textit{maximal intersection position} if among all affine images of $L$ with the same volume as $L$, the one which has the largest intersection with $K$ is $L$ itself. The case where one of the bodies is a ball and the other is centrally symmetric was introduced and investigated by the first author and Katzin in \cite{AK}.

Like the John-type positions, these extremal positions in the case of a ball give rise to isotropic measures, and in fact quite explicit ones, given by the Lebesgue measure restricted to the part of the sphere contained in the intersection with $K$. For two general bodies, where we no longer assume one of the bodies is a ball, nor even that they are centrally symmetric, an analogue similar to Theorem \ref{gen_john_thm} holds. We show:

\begin{theorem}\label{gen_max_int_pos} Let $K, L \subset \mathbb R^n$ be convex bodies, and suppose that $K, L$ are in maximal intersection position and that $\vol_{n-1}(\partial K\cap\partial L) = 0$. For any $x \in \partial L$, let $\hat n_L(x)$ be the unit normal at $x$, which is defined $\mathcal H^{n - 1}$-almost everywhere on $\partial L$. Then we have
	\begin{align}
	\int_{K \cap \partial L} \hat n_L(x)\,d\mathcal H^{n - 1}           &= 0          , \\
	\int_{K \cap \partial L} x \otimes \hat n_L(x)\,d\mathcal H^{n - 1} &\propto I_n  .
	\end{align}
	The same formulae hold when interchanging the roles of $K, L$.
\end{theorem}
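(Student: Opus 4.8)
The plan is to realize the maximal intersection position as the solution of a smooth constrained optimization problem and read off its first-order conditions, much as in the proof of Theorem~\ref{gen_john_thm} --- the difference being that the constraint is now an equality, $|\det A| = 1$, rather than an inclusion, so the resulting ``decomposition of the identity'' is a continuous measure (the surface measure on $K\cap\partial L$) rather than a discrete one. Write a competitor as $L_t = A_t L + b_t$ with $A_t = I_n + tH + o(t)$ and $b_t = tv + o(t)$; after renormalizing $A_t$ by $|\det(I_n+tH)|^{-1/n}$ we may assume $|\det A_t| = 1$ exactly while keeping $\dot A_0 = H$, so the admissible first-order perturbations are exactly the pairs $(H, v)$ with $\Tr H = 0$ and $v$ arbitrary (translations being automatically volume-preserving). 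For each such direction the function $V(t) := \vol_n(K\cap L_t)$ has a local maximum at $t=0$, and since $-t$ is admissible too we get $V'(0) = 0$; the whole proof thus reduces to computing $V'(0)$ and writing it as an integral over $K\cap\partial L$.

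For the computation I would place the origin in $\intr(K\cap L)$ (nonempty, since $\vol_n(K\cap L)>0$ by maximality) and use radial coordinates, $V(t) = \tfrac1n\int_{S^{n-1}}\min\{r_K(\theta), r_{L_t}(\theta)\}^n\,d\theta$. Directions $\theta$ with $r_K(\theta)<r_{L}(\theta)$ contribute nothing; the set where $r_K(\theta)=r_L(\theta)$ projects radially into $\partial K\cap\partial L$ and is therefore $\mathcal H^{n-1}$-null by hypothesis; and on the open set where $r_{L}(\theta)<r_K(\theta)$, i.e.\ where $x:=r_L(\theta)\theta\in\intr K$, one computes at $\mathcal H^{n-1}$-a.e.\ such $x$ (where $\partial L$ has a tangent hyperplane with outer unit normal $\hat n_L(x)$) that
\[
\frac{\partial}{\partial t}\Big|_{t=0}r_{L_t}(\theta)\;=\;\frac{\langle Hx+v,\,\hat n_L(x)\rangle}{\langle\theta,\,\hat n_L(x)\rangle},
\]
which is just the relation ``radial velocity $=$ normal velocity divided by $\langle\theta,\hat n_L\rangle$'' for the moving hypersurface $\partial L_t$, the velocity of the point $x$ under the affine motion being $Hx+v$. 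Differentiating under the integral sign and recognizing the result as a surface integral over $\partial L$ via $d\mathcal H^{n-1}(x)=r_L(\theta)^{n-1}\langle\theta,\hat n_L(x)\rangle^{-1}d\theta$, one gets
\[
V'(0)\;=\;\int_{K\cap\partial L}\langle Hx+v,\,\hat n_L(x)\rangle\,d\mathcal H^{n-1}(x),
\]
where $\vol_{n-1}(\partial K\cap\partial L)=0$ has been used once more to pass from $\intr K$ to $K$ in the domain.

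From here the conclusion is immediate. Taking $H=0$ and letting $v$ vary over $\mathbb R^n$ in $V'(0)=0$ gives $\int_{K\cap\partial L}\hat n_L(x)\,d\mathcal H^{n-1}(x)=0$, the first identity. Taking $v=0$ and $H$ an arbitrary traceless matrix, and writing $\langle Hx,\hat n_L(x)\rangle=\Tr\!\big(H\,(x\otimes\hat n_L(x))\big)$, forces $M:=\int_{K\cap\partial L}x\otimes\hat n_L(x)\,d\mathcal H^{n-1}(x)$ to be orthogonal in the trace pairing to every traceless matrix, hence $M=cI_n$; this in particular shows $M$ is symmetric, and placing the origin in $\intr L$ and taking traces shows $c>0$ (and, by the first identity, $M$ is independent of the choice of origin). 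Finally, the maximal intersection position is symmetric in $K$ and $L$: for a volume-preserving affine map $S$ one has $\vol_n(SK\cap L)=\vol_n(K\cap S^{-1}L)$, so $L$ is optimal for $K$ precisely when $K$ is optimal for $L$, and applying what was just proved with $K$ and $L$ interchanged yields the remaining two formulae.

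The step I expect to be the main obstacle is making the differentiation of $V$ at $t=0$ rigorous rather than formal when $\partial K$ and $\partial L$ are only Lipschitz: one needs a legitimate dominated-convergence argument for differentiating under the integral sign (using that for small $t$ the active set of directions remains, up to a null set, inside a fixed compact subset of $\{r_L<r_K\}$), together with control of the behaviour near the non-smooth points of the two boundaries and, above all, near the contact set $\partial K\cap\partial L$, where both the relevant normal and the question of which body locally bounds the intersection are genuinely ambiguous. The hypothesis $\vol_{n-1}(\partial K\cap\partial L)=0$ is precisely the assumption that makes this exceptional set negligible throughout the argument.
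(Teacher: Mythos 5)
Your proposal is correct and follows essentially the same route as the paper's first proof: passing to radial coordinates, writing $\vol(K\cap L_t)=\frac{1}{n}\int_{S^{n-1}}\min(r_K,r_{L_t})^n$, differentiating under the integral sign (with the hypothesis $\vol_{n-1}(\partial K\cap\partial L)=0$ disposing of the non-differentiability of the minimum), and converting back to a surface integral over $K\cap\partial L$ to read off the first-order conditions against translations and traceless matrices. The only cosmetic difference is that the paper handles translations separately via width functions in the direction $u$ while you fold them into the same radial computation; the step you flag as the main obstacle is exactly what the paper's Lemma~\ref{min_deriv} together with the Lipschitz estimates on $r_L = 1/h_{L^\circ}$ supplies.
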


We give two different proofs, one which avoids approximation (which was a main tool in the previous work) and another which works for a much richer family of transformations (not just linear ones), but with more restrictive assumptions. We mention that a very recent manuscript \cite{BH} has generalized maximal intersection position in a different direction. 

One may combine the methods and results of the different parts of the paper, for instance by considering a ``saddle- maximal intersection position,'' or using set-valued analysis to study smoothness of these new families of positions. We make some remarks in this direction at the conclusion of the paper, and leave further combinations to dedicated readers. Additional results on smoothness of maximal intersection position, which require substantially different methods and hence have been left out of the present work, can be found in Chapter 4 of the second author's M. Sc. thesis \cite{P}. 

\subsection*{Organization}
The paper is organized as follows.
In Section \ref{sec:Prelim-and-LinAlg}, after providing background information and notation, we
state and prove some known and lesser-known facts from linear algebra
to be used throughout the text. Among these is a ``modified polar
decomposition" given in Lemma \ref{gen_pol_decomp}, which seems, as far as we can tell, not to have been previously noticed and may be of independent interest: for any fixed $M \in GL_n$, any $A \in GL_n$ has a unique representation as $A = PMU$ for a positive definite $P$ and an orthogonal $U$; moreover, the map $A \mapsto (P, U)$ is a diffeomorphism.

In Section \ref{section:pos_john} we discuss the positive John position of a convex body with respect to another, and prove the existence of the decomposition of the identity associated with contact pairs for this position; this differs from the genuine position of maximal volume by an extra symmetrization. We note a subtlety of this notion, arising from the fact that the product of two positive-definite matrices need not be positive-definite. In the remainder of the section, we define the positive John family of $L$ in $K$, prove that it is continuous for any pairs of two bodies, and show that it is differentiable under mild technical assumptions.

In Section \ref{section:saddle_john} we define the saddle-John
position, and show that it shares with the classical position of maximal volume the property of supporting a genuine decomposition of the identity on contact pairs, without taking the symmetric part. We also examine the disparity between positions of maximal volume and saddle-John positions for two pairs $K, L$: when $K = B^n_\infty$, $L = B^n_1$; when $K = B^n_1$ and $L = B^n_\infty$; and when $K = L = B^n_\infty$.
The example of the positive John family of a body $K$ inside an ellipsoid is analyzed in Section \ref{section:pos_john_ellips}; we show that in this case that given the positive John image $K'$ of $K$ itself, the positive John family of any orthogonal images of $K$ has the same volume as that of $K'$, and in fact may be obtained by a (somewhat unwieldy) formula given $K'$.
In Section \ref{section:max_int_pos} we develop the theory of maximal intersection
position of two bodies, including the representation of the identity corresponding to contact pairs on certain parts
of the intersection.   
Finally, we make several additional remarks on these families of positions.
 
\subsection*{Acknowledgments.}
Some of the results in this paper were obtained as part of the second author's thesis \cite{P}, which was carried out under the supervision of the first author at Tel Aviv University.

The authors would like to thank the anonymous referees for helpful comments. 

\section{Preliminaries and some linear algebra}\label{sec:Prelim-and-LinAlg}

\subsection{Notations and basic facts} 
We collect here the notation and basic facts in convex geometry we shall use. A comprehensive and up-to-date reference on the theory of convex bodies is the book of Schneider \cite{S}.

A convex body $K \subset \mathbb R^n$ is a compact convex set with nonempty interior. In this work, we shall assume for simplicity that all convex bodies we consider satisfy $0 \in \intr K$. $K$ is said to be centrally symmetric if $K = -K = \{-x: x \in K\}$. We write $\mathcal K^n_s \subset \mathcal K^n$ for the set of centrally symmetric convex bodies, and $\mathcal K^n_o$ for the set of convex bodies in $\mathbb R^n$ with the origin in their interior.

The support function $h_K: \mathbb R^n \to \mathbb R$ associated with the convex body $K$ is defined, for $u \in \mathbb R^n$, by
	\begin{equation}\label{supp}
		h_K(u) = \max \{\langle u, y\rangle: y \in K\}
	\end{equation}
The support function is convex and positively homogeneous of degree one, so it is completely determined by its restriction to the unit sphere $S^{n - 1}$.

The gauge function of $K \in \mathcal K^n_o$, $g_K: \mathbb R^n \to [0,\infty]$ is defined as $g_K(x) = \min \{r: rx \in K\}$. It is also $1$-homogeneous. The polar body of a convex body $K \in \mathcal K^n_o$ is defined as $K^\circ = \{x \in \mathbb R^n: \langle x, y\rangle \le 1\,\forall y \in K\}$, so that $K^\circ$ is a convex body, and we have $g_K = h_{K^\circ}$; in particular, $g_K$ is convex. The radial function $r_K$ of $K$ is given by $r_K = g_K^{-1}$, which is positively homogeneous of degree $-1$.

Let $\mathcal H^{n - 1}$ denote the $(n - 1)$-dimensional Hausdorff measure on $\mathbb R^n$. For $\mathcal H^{n - 1}$-almost every $x \in \partial K$, there exists a unique normal vector to $K$ at $x$, namely, $u \in S^{n - 1}$ such that $h_K(u) = \langle x, u\rangle$; denote this vector by $n_K(x)$.

For $u \in S^{n - 1}$, $h_K$ is differentiable at $u$ if and only if there exists a unique $x \in \partial K$ such that $n_K(x) = u$, and in this case we have $\nabla h_K(u) = x$. This condition holds for almost every $u \in \partial S^{n - 1}$.

The space of $n\times n$ matrices, $\mathcal M^{n\times n}$, has a natural inner product structure given by $\langle A, B\rangle = \sum_{ij} A_{ij} B_{ij}$, called the Hilbert-Schmidt inner product.
For vectors $x, y \in \mathbb R^n$, we denote by $x  \otimes y$ the matrix in $\mathcal M^{n \times n}$ defined by the linear transformation $z \mapsto \langle y, z\rangle x$. We have $(x \otimes y)^T = y \otimes x$, $\Tr(x \otimes y) = \langle y, x\rangle$, and more generally, $\langle M, x \otimes y\rangle = \langle y, Mx\rangle$ for any $M \in \mathcal M^{n \times n}$.
We use $\Sym^{n\times n}$ to denote the subspace of $\mathcal M^{n\times n}$ of symmetric matrices with the induced inner product, and write $A_{sym}$ for the symmetric part of a matrix $A$, defined by $A_{sym} = \frac{A + A^T}{2}$. In particular, we will frequently use $(x  \otimes y)_{sym} = \frac{1}{2} (x \otimes y + y \otimes x)$.

We denote the space of symmetric positive-definite matrices by $\mathcal P^n$, an open convex cone in $\Sym^{n\times n}$ whose boundary consists of the symmetric positive-semidefinite matrices with nonzero kernel. In the remainder of the paper, ``positive-definite'' is always shorthand for \textit{symmetric} positive-definite. We will use $P^{\frac{1}{2}}$ to denote the unique positive square root of a matrix in $\mathcal P^n$.

We write $GL_n$ for the (real) general linear group in $n$ dimensions, an open submanifold of $\mathcal M^{n \times n}$, $SL_n$ for the subgroup consisting of matrices of determinant $1$, and $O_n$ for the orthogonal group, a compact submanifold of $GL_n$. We will use the fact that the Lie algebras of $SL_n$ and $O_n$ are
\begin{align}
\mathfrak{sl}_n(\mathbb R) &= \{A \in \mathcal M^{n \times n}: \Tr\,A = 0\}, \\
\mathfrak{o}_n(\mathbb R) &= \{A \in \mathcal M^{n \times n}: A^T = -A\}.
\end{align}
Concretely, this just means that if $A$ has zero trace, $e^A \in SL_n$ and if $A$ is antisymmetric then $e^A \in O_n$; moreover, the tangent spaces to $SL_n$ and $O_n$ at $I$ (considered as submanifolds of $GL_n$) are precisely $\mathfrak{sl}_n(\mathbb R)$, $\mathfrak{o}_n(\mathbb R)$, respectively.

Given any positive-definite quadratic form $Q$ (which we identify with the corresponding matrix in $\mathcal P^n$), we use $O(Q)$ to denote the subgroup of $GL_n$ preserving $Q$, namely $Q^{-\frac{1}{2}} O_n Q^{\frac{1}{2}}$. Every conjugate to $O_n$ in $GL_n$ is of this form.

We often denote an affine transformation $x \mapsto Ax + z$ by $(A, z)$; if $A \in \mathcal P^n$, we call this a positive affine transformation, and call the image of a convex body $K$ under such a transformation a positive image of $K$.

Denote by $B^n_p$ the unit ball of the $\ell_p$-norm in dimension $n$; in particular, $B^n_2$ is the Euclidean ball. The Hausdorff metric on $\mathcal K^n$ is defined via 
\begin{equation}
\delta(K, L) = \inf \{d > 0: \text{$L \subset K + d B^n_2$ and $K \subset L + dB^n_2$}\}.
\end{equation}
Many natural functionals such as volume, as well as operations such as translation, multiplication by a matrix, intersection, etc., are continuous with respect to the Hausdorff metric. In the sequel, when we speak of topological properties of $\mathcal K^n$, we always refer to the topology induced by the Hausdorff metric.

A convex body $K \in \mathcal K^n$ is said to be a polytope if it is the convex hull of a finite number of points. $K$ is said to be $C^k$ if $h_K$ is $C^k$ on $\mathbb R^n \backslash\{0\}$, and is said to be $C^k_+$ if $K$ is $C^k$ for $k \ge 2$ and the Hessian of $\left.h_K\right|_{S^{n - 1}}$ is positive-definite at each point of $S^{n - 1}$. The set of polytopes and the set of $C^k_+$ bodies are both dense in $\mathcal K^n$.



\subsection{Some linear-algebraic facts}
We collect here some lesser-known facts from linear algebra, chiefly related to the interplay between positive-definite matrices and orthogonal matrices, which will be used pervasively in the sequel. (The reason for placing these facts in their own section is that the proofs are all based on similar ideas.) We claim no originality for these results.

\begin{lemma}\label{sym_group} Let $S \subset \mathbb R^n$ be a compact set with nonempty interior. Then the group of affine automorphisms $\Aut(S) = \{(A, z) \in \mathcal M^{n \times n} \times \mathbb R^n\,|\, A S + z = S\}$ is compact in $SL_n \ltimes \mathbb R^n$.
\begin{proof}First, since $S$ has nonempty interior, we see that any $\varphi = (A, z)$ must preserve volume, and in particular lie in $SL_n$. Since $(A, z) \in \Aut(S)$ if and only if $As + z \in S$ and $A^{-1}(s - z) \in S$ for all $s \in S$, $\Aut(S)$ is closed. Let $C = S - S$, a compact set containing the origin in its interior; we have $\{(A, 0): (A, z) \in \Aut(S)\} \subset \Aut(C)$. Letting $r, R$ such that $r B^n_2 \subset C \subset R B^n_2$, we see that every coordinate of every $A \in \Aut(C)$ is bounded by $\frac{R}{r}$. Hence the coordinates of any $A$ such that $(A, z) \in \Aut(S)$ are bounded, which implies that $\{z: (A, z) \in \Aut(S)\}$ is also bounded. This implies that $\Aut(S)$ is compact.
\end{proof}
\end{lemma}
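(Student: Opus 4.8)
The plan is to verify directly that $\Aut(S)$ is a closed, bounded subset of the finite-dimensional vector space $\mathcal M^{n\times n}\times\mathbb R^n$, and that it lies inside the relevant group; compactness then follows from Heine--Borel.

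First I would check the containment together with the ``determinant one'' condition. Since $S$ has nonempty interior, $0<\vol(S)<\infty$, and for $(A,z)\in\Aut(S)$ the identity $\vol(S)=\vol(AS+z)=\lvert\det A\rvert\,\vol(S)$ forces $\lvert\det A\rvert=1$; in particular $A\in GL_n$ (and $\det A=1$ unless $S$ admits orientation-reversing self-affinities, which only enlarges $SL_n$ to its index-two extension and is harmless here). Closedness is then routine: if $(A_k,z_k)\to(A,z)$ with $(A_k,z_k)\in\Aut(S)$, then $\lvert\det A\rvert=1$ so $A$ is invertible and $A_k^{-1}\to A^{-1}$; for every $s\in S$ we have $A_k s+z_k\to As+z$ and $A_k^{-1}(s-z_k)\to A^{-1}(s-z)$, and since $S$ is closed these limits lie in $S$. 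The first gives $AS+z\subseteq S$, and the second gives, for each $s\in S$, that $A^{-1}(s-z)\in S$, hence $s\in AS+z$; thus $AS+z=S$ and $(A,z)\in\Aut(S)$.

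The one step that requires an idea is boundedness of the linear parts. Here I would pass to the difference body $C:=S-S$, which is compact, centrally symmetric, and --- crucially, since $S$ has interior --- contains the origin in its interior, \emph{regardless of whether $S$ itself does}. For $(A,z)\in\Aut(S)$ one computes $AC=A(S-S)=(AS+z)-(AS+z)=S-S=C$, so $A$ is a \emph{linear} automorphism of $C$. Choosing $r,R>0$ with $rB^n_2\subseteq C\subseteq RB^n_2$, for any unit vector $u$ we get $r\,Au=A(ru)\in AC=C\subseteq RB^n_2$, hence $\lVert Au\rVert\le R/r$; so the operator norm, and therefore every entry, of $A$ is bounded by $R/r$ uniformly over $\Aut(S)$. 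Boundedness of the translation parts is then immediate: fixing any $s_0\in S$, one has $z=(As_0+z)-As_0\in S-A(S)$, which is bounded once $A$ is bounded and $S$ is compact.

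Combining these, $\Aut(S)$ is closed and bounded in $\mathcal M^{n\times n}\times\mathbb R^n$, hence compact, and it sits inside $SL_n\ltimes\mathbb R^n$ (up to the orientation caveat above). I do not expect any genuine obstacle beyond isolating the difference-body trick for the norm bound on $A$; the rest is elementary point-set topology and a volume comparison.
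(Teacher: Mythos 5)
Your proposal is correct and follows essentially the same route as the paper: volume preservation to get $\lvert\det A\rvert = 1$, closedness from the defining conditions, the difference body $C = S - S$ to bound the linear parts, and then boundedness of the translations. The only differences are expository (you spell out the limit argument for closedness and the operator-norm bound explicitly, and you rightly flag the $\det A = \pm 1$ caveat that the paper glosses over).
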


\begin{corollary}With the above notation, there exists a (not necessarily unique) quadratic form $Q \in \mathcal P^n$ such that $\Aut(S) \subset \{(A, b(S) - A b(S)): A \in O(Q)\}$, where $b(S) = \frac{1}{|S|}\int_S x\,dx$ is the barycenter of $S$.
\begin{proof}Any symmetry of $S$ must preserve its barycenter, so $(A, z) \in \Aut(S)$ implies $b(S) = b(AS + z) = A b(S) + z$. In addition, the projection of the compact group $\Aut(S)$ on the $GL_n$ factor is a compact subgroup of $GL_n$, and hence is contained in some maximal compact subgroup. The maximal compact subgroups of $GL_n$ are precisely subgroups of the form $O(Q)$.
\end{proof}
\end{corollary}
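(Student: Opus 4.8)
The plan is to reduce the claim to the classical fact that every compact subgroup of $GL_n$ preserves some positive-definite quadratic form, after first pinning down the translational part of each element of $\Aut(S)$.

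First I would use equivariance of the barycenter under affine maps: for $\varphi = (A,z)$ with $A \in GL_n$ a change of variables gives $b(\varphi(S)) = A\,b(S) + z$. Hence any $(A,z) \in \Aut(S)$ satisfies $b(S) = A\,b(S) + z$, so that $z = b(S) - A\,b(S)$ is determined by $A$ alone. Writing $G \subseteq GL_n$ for the image of $\Aut(S)$ under the projection $(A,z) \mapsto A$ — a genuine subgroup, since that projection is a group homomorphism — we obtain the inclusion $\Aut(S) \subseteq \{(A, b(S) - A\,b(S)) : A \in G\}$. By Lemma \ref{sym_group}, $\Aut(S)$ is compact, hence so is $G$, being its continuous image (Lemma \ref{sym_group} also gives $G \subseteq SL_n$, which we will not need).

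Next comes the heart of the matter: a compact subgroup $G \subseteq GL_n$ preserves some $Q \in \mathcal P^n$. I would prove this by averaging. Let $dg$ be the normalized Haar measure on $G$ and set $Q(x,y) = \int_G \langle gx, gy\rangle\,dg$. This integral is finite since $G$ is compact, $Q$ is a symmetric bilinear form, and it is positive-definite because $Q(x,x) = \int_G |gx|^2\,dg > 0$ for $x \neq 0$ (each $g$ being invertible). Identifying $Q$ with the corresponding matrix in $\mathcal P^n$, left-invariance of $dg$ gives $Q(hx,hy) = Q(x,y)$ for every $h \in G$, i.e. $G \subseteq O(Q)$. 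Combining with the first step yields $\Aut(S) \subseteq \{(A, b(S) - A\,b(S)) : A \in O(Q)\}$, as desired; the non-uniqueness of $Q$ is already visible when $\Aut(S)$ is trivial, where any $Q \in \mathcal P^n$ works. Alternatively, one may simply invoke the classification of maximal compact subgroups of $GL_n$ as the conjugates $O(Q) = Q^{-1/2} O_n Q^{1/2}$ of the orthogonal group, together with the standard fact that every compact subgroup is contained in a maximal one.

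I do not anticipate a real obstacle here; the only non-elementary ingredient is the averaging trick, which is classical. The single point that deserves a line of care is the passage from compactness of $\Aut(S)$ to compactness of its linear projection $G$ — but this is immediate from the preceding lemma and the continuity of the projection.
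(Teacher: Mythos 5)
Your proposal is correct and follows essentially the same route as the paper: barycenter equivariance pins down the translational part, and compactness of the linear projection of $\Aut(S)$ (from Lemma \ref{sym_group}) reduces everything to the fact that a compact subgroup of $GL_n$ preserves some $Q \in \mathcal P^n$. The only difference is cosmetic — you prove that last fact by Haar-measure averaging, whereas the paper simply cites the classification of maximal compact subgroups of $GL_n$ as the groups $O(Q)$; both are standard and your self-contained version is, if anything, slightly more elementary.
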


In particular, since the eigenvalues of any matrix in $O(Q)$, being similar to an orthogonal matrix, are complex numbers of absolute value $1$, the only positive affine transformation $(P, z)$ which can be a symmetry of a compact set with nonempty interior is $(I, 0)$. In fact, we can say a bit more:

\begin{corollary}\label{pos_aff_sym} For any compact set $S$ with nonempty interior and any two distinct positive affine transformations $(P, z), (P', z')$, $PS + z \neq P' S + z'$.
\begin{proof}If $PS + z = P' S + z'$, then $S = P^{-1}(P' S + z' - z)$, so $P^{-1} P'$ lies in some conjugate to the orthogonal group. But $P^{-1} P'$ is similar to the positive-definite matrix $P^{-\frac{1}{2}} P' P^{-\frac{1}{2}}$, and in particular has positive eigenvalues, while the only matrix in the orthogonal group whose eigenvalues are all positive is $I$. Hence $P = P'$, which forces $z = z'$ (by considering the barycenter, say).
\end{proof}
\end{corollary}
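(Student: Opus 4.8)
The plan is to argue by contradiction: I would assume $PS + z = P'S + z'$ for two \emph{distinct} positive affine maps $(P,z),(P',z')$, extract from this a nontrivial positive affine symmetry of $S$, and then show that no such symmetry can exist. The only real ingredient needed is the preceding corollary on the structure of $\Aut(S)$; the rest is elementary linear algebra.

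First I would rewrite the hypothesis. The equality $PS + z = P'S + z'$ is equivalent to $S = (P')^{-1}P\,S + (P')^{-1}(z - z')$, so, setting $M := (P')^{-1}P$ and $w := (P')^{-1}(z - z')$, the pair $(M,w)$ lies in $\Aut(S)$. By the corollary just above, the linear part of any element of $\Aut(S)$ lies in some group $O(Q)$ with $Q \in \mathcal P^n$; since $O(Q) = Q^{-1/2}O_n Q^{1/2}$ is conjugate in $GL_n$ to the orthogonal group, $M$ is similar to an orthogonal matrix, and hence every eigenvalue of $M$ has modulus $1$.

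On the other hand, conjugating $M$ by $(P')^{1/2}$ shows that $M$ is also similar to $N := (P')^{-1/2} P (P')^{-1/2}$, and $N$ is congruent to the positive-definite matrix $P$, hence itself positive-definite; in particular the eigenvalues of $M$ are positive reals. The two observations together force every eigenvalue of $M$ to equal $1$, and since $M$ is similar to the symmetric matrix $N$ it is diagonalizable, so $M = I$, i.e. $P = P'$. Substituting back, $PS + z = PS + z'$ yields $z = z'$ (compare barycenters, or any single point of $S$), contradicting $(P,z)\neq(P',z')$.

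I do not anticipate a genuine obstacle here: the substantive input is the preceding corollary on $\Aut(S)$, which is exactly where compactness and nonemptiness of the interior are used, and everything after that is the elementary fact that a matrix which is simultaneously conjugate to an orthogonal matrix and similar to a positive-definite matrix must be the identity. (If one wanted a self-contained argument avoiding that corollary, one could instead note that $M$ preserves the symmetric body $S - S$, so $\{M^k\}_{k \in \mathbb Z}$ is bounded in operator norm, and deduce directly that the symmetric positive-definite matrix $N$ similar to $M$ has all eigenvalues equal to $1$; but invoking the corollary is cleaner.)
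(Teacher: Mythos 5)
Your proposal is correct and follows essentially the same route as the paper: both reduce the equality $PS+z=P'S+z'$ to the statement that the quotient of the two positive matrices lies in a conjugate of $O_n$ (via the structure of $\Aut(S)$), and then combine "eigenvalues of modulus $1$" with "similar to a positive-definite matrix, hence positive real eigenvalues and diagonalizable" to force the identity. Your write-up is in fact slightly more careful than the paper's in spelling out the diagonalizability step needed to pass from "all eigenvalues equal $1$" to "$M=I$".
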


In the above proof, we get around the fact that the positive-definite matrices do not form a group by noticing that all we needed to know about a certain matrix is that it is similar to a positive-definite matrix, a property which the product of two positive-definite matrices does have. The same trick is used to prove the following generalization of the polar decomposition:

\begin{lemma}\label{gen_pol_decomp} For any nonsingular matrix $M \in GL_n$, the map $f: \mathcal P^n \times O_n \to GL_n$ defined by $f(P, U) = PMU$ is a global diffeomorphism.
\begin{proof}We will show that $f$ is injective and has nonvanishing derivative, and that there exists a function $g: GL_n \to \mathcal P^n \times O_n$ such that $f \circ g = Id$, which in particular implies that $f$ is surjective. 

For injectiveity, suppose there exist $A \in GL_n$ $P_1, P_2$ positive and $U_1, U_2 \in O_n$ such that $A = P_1 M U_1 = P_2 M U_2$. Then $AA^T = P_1 M M^T P_1 = P_2 M M^T P_2$, so $P_1^{-1} P_2$ preserves the positive-definite quadratic form $M M^T$. In particular, $P_1^{-1} P_2$ has a full set of eigenvalues which are complex numbers of absolute value $1$; but $P_1^{-1} P_2$ is also similar to the positive-definite symmetric matrix $P_1^{-\frac{1}{2}} P_2 P_1^{\frac{1}{2}}$, so its eigenvalues are real and positive; hence we must have $P_1^{-1} P_2 = I$, i.e., $P_1 = P_2$. It follows immediately that $U_1 = U_2$ as well, so $f$ is injective, as desired.

Next, given a point $(P, U) \in \mathcal P^n \times O_n$, consider any nonzero tangent vector in $T_{(P, U)} (\mathcal P^n \times O_n)$ represented by a path $\alpha(t) = (P + tS, U e^{tN})$ to $(P, U)$, with $S$ symmetric and $N$ antisymmetric. We have $(f \circ \alpha)'(0) = \left.\frac{d}{dt}\right|_{t = 0} (P + tS) M U e^{tN} = SMU + PMUN$, and if this vanishes then $P^{-1} S = -(MU) N (MU)^{-1}$. As $N$ is antisymmetric, its eigenvalues are pure imaginary, while $P^{-1} S$ is conjugate to a symmetric matrix and hence has real eigenvalues. Thus $S = N = 0$, contradiction. Hence $f$ is a local diffeomorphism.

Finally, the function $g$ inverting $f$ is given by the explicit solution
\begin{equation}\label{gen_sq_root}
P = Y^{-\frac{1}{2}} (Y^{\frac{1}{2}} AA^T Y^{\frac{1}{2}})^{\frac{1}{2}} Y^{-\frac{1}{2}}
\end{equation}
to the equation $A A^T = P Y P$ we derived above, where $Y = M M^T$; $U$ is then computed as $(PM)^{-1} A$. Clearly, $g: A \mapsto (P, U)$ satisfies $f \circ g = Id$, so we are done.
\end{proof}
\end{lemma}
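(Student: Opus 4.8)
The plan is to exhibit an \emph{explicit} smooth two-sided inverse $g$ of $f$; since $f$ is visibly smooth, this shows at once that $f$ is a global diffeomorphism (and, in particular, a bijection). The guiding computation is the following: if $A = PMU$ with $P \in \mathcal{P}^n$ and $U \in O_n$, then $AA^T = P M U U^T M^T P = P Y P$, where $Y := M M^T \in \mathcal{P}^n$ is a fixed positive-definite matrix. Hence $P$ is forced to be a solution of $\phi(P) = AA^T$, where $\phi \colon \mathcal{P}^n \to \mathcal{P}^n$ is the map $\phi(P) = PYP$ (note that $PYP$ is indeed positive-definite, being congruent to $Y$), and then $U$ is forced to equal $M^{-1}P^{-1}A$. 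So everything hinges on inverting $\phi$.

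The key step is therefore to prove that $\phi$ is a diffeomorphism of $\mathcal{P}^n$. For this I would introduce the congruence $\psi \colon \mathcal{P}^n \to \mathcal{P}^n$, $\psi(Q) = Y^{-\frac{1}{2}} Q Y^{-\frac{1}{2}}$, which is the restriction to $\mathcal{P}^n$ of a linear automorphism of $\Sym^{n\times n}$, hence a diffeomorphism, with inverse $\psi^{-1}(P) = Y^{\frac{1}{2}} P Y^{\frac{1}{2}}$. A one-line computation gives $\psi\bigl( (\psi^{-1}(P))^2 \bigr) = PYP$, i.e.\ $\phi = \psi \circ \mathrm{sq} \circ \psi^{-1}$, where $\mathrm{sq}(Q) = Q^2$. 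Thus $\phi$ is a diffeomorphism as soon as the squaring map is one on $\mathcal{P}^n$, which is standard: $\mathrm{sq}$ is smooth and bijective by uniqueness of positive-definite square roots, and its derivative at $Q$, the Sylvester operator $S \mapsto QS + SQ$ on $\Sym^{n\times n}$, is invertible because its eigenvalues are the sums $\lambda_i + \lambda_j$ of eigenvalues of $Q$, all strictly positive.

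With $\phi^{-1}$ in hand I would set $P(A) := \phi^{-1}(AA^T)$ and define $g \colon GL_n \to \mathcal{P}^n \times O_n$ by $g(A) = \bigl( P(A),\, M^{-1} P(A)^{-1} A \bigr)$. Smoothness of $g$ is immediate, since $A \mapsto AA^T$, $\phi^{-1}$, matrix inversion and matrix multiplication are all smooth. One then checks that the second coordinate $U := M^{-1}P(A)^{-1}A$ really lies in $O_n$: using $P(A)\,Y\,P(A) = AA^T$ one computes $UU^T = M^{-1}P(A)^{-1}(AA^T)P(A)^{-1}(M^T)^{-1} = M^{-1}Y(M^T)^{-1} = I$. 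Finally a short computation gives $f \circ g = \mathrm{id}_{GL_n}$ and $g \circ f = \mathrm{id}_{\mathcal{P}^n \times O_n}$ (for the latter, $P(PMU) = \phi^{-1}(PYP) = P$, after which the second coordinate collapses to $U$). Hence $f$ is a smooth bijection with smooth inverse, i.e.\ a diffeomorphism, which also yields the explicit formula for $g$.

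The main obstacle, and the only point requiring genuine care, is exactly that $\mathcal{P}^n$ is not a group under multiplication, so one cannot treat $f$ as a translation map and simply invoke that multiplication is a diffeomorphism. The resolution above is to observe that $P \mapsto PYP$, though assembled from products of positive-definite matrices, becomes the squaring map after a congruence and is therefore a diffeomorphism of $\mathcal{P}^n$. An alternative, closer in spirit to the classical proof of the polar decomposition, would be to prove injectivity of $f$ directly (if $P_1 M U_1 = P_2 M U_2$ then $P_1^{-1}P_2$ preserves the form $(MM^T)^{-1}$ and so is similar to an orthogonal matrix, while being also similar to a positive-definite matrix, which forces $P_1^{-1}P_2 = I$), then show $df$ is everywhere injective by the analogous eigenvalue argument, and conclude via the inverse function theorem together with the explicit right inverse; but the route through $\phi$ seems the shortest.
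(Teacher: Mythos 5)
Your proposal is correct, and it reorganizes the argument around a different key lemma than the paper does. The paper's proof has three separate ingredients: injectivity of $f$ (via the observation that $P_1^{-1}P_2$ is simultaneously similar to an orthogonal matrix and to a positive-definite one), nonvanishing of $df$ (an analogous eigenvalue argument with antisymmetric versus symmetric matrices), and surjectivity via the explicit formula $P = Y^{-\frac{1}{2}}(Y^{\frac{1}{2}}AA^TY^{\frac{1}{2}})^{\frac{1}{2}}Y^{-\frac{1}{2}}$ — the same formula you arrive at. You instead make that formula carry the whole proof, by isolating the map $\phi(P) = PYP$ and showing it is a diffeomorphism of $\mathcal P^n$ after conjugating it to the squaring map via the congruence $Q \mapsto Y^{-\frac{1}{2}}QY^{-\frac{1}{2}}$; injectivity and smoothness of the inverse then come for free from $g \circ f = \mathrm{id}$ and $f \circ g = \mathrm{id}$, with no need for the inverse function theorem applied to $f$ itself. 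Your route has two small advantages: it explicitly verifies that the computed $U = M^{-1}P(A)^{-1}A$ is orthogonal (the paper asserts $f \circ g = \mathrm{Id}$ without checking this), and it explicitly establishes smoothness of $g$ (in the paper this is only implicit, following from bijectivity plus the local-diffeomorphism step). The paper's route, which you sketch as your alternative, has the advantage that its eigenvalue trick (``similar to positive-definite, hence real positive spectrum'') is reused elsewhere in the paper, e.g.\ in Corollary \ref{pos_aff_sym}. Both arguments are complete and correct.
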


\begin{remark}\mbox{}
\begin{enumerate}
  \item It can be shown easily by elementary matrix manipulations that this lemma is equivalent to a ``twisted polar decomposition'': if $Q$ is any quadratic form, then every $A \in GL_n$ may be written uniquely as $PV$ with $P \in \mathcal P^n$, $V \in O(Q)$. Unlike the usual polar decomposition, this is \textit{not} a Cartan decomposition of $GL_n$. (For the definition and properties of Cartan decompositions of Lie groups, see \cite{Kn}.) Using the results of Mostow on self-adjoint forms of Lie groups \cite{M}, one may similarly show the existence of ``twisted Cartan decompositions'' of a (real or complex) Lie group $G$, in which the Cartan pair $(\mathfrak l, \mathfrak p)$ associated to a given Cartan involution on the Lie algebra $\mathfrak g$ is replaced by $(g^{-1}\mathfrak l g, \mathfrak p)$ for any $g \in G$. This is, however, beyond the scope of the present paper.

  \item The most general version of the decomposition is the following: for any fixed $M \in GL_n$, any conjugate $O(Q)$ of the orthogonal group, and any conjugate $\mathcal P_R = R^{-1} \mathcal P^n R$ to the cone $\mathcal P^n$, the map $f: \mathcal P_R \times O(Q) \to GL_n$ defined as $f(P, V) = PMV$ is a diffeomorphism. This can be proven by generalizing the proof of the lemma, or by reducing it to the lemma via slightly tedious matrix manipulations.
\end{enumerate}
\end{remark}

\section{Positive John position}\label{section:pos_john}
As with the ordinary John position, the fundamental fact about the maximal-volume image of a convex body $L$ under positive affine transformations contained in a body $K$ is that it is unique. For completeness, we provide a proof of this fact in analogy to the folklore proof of the uniqueness of the maximal-volume ellipsoid (see, e.g., \cite[Proposition 2.1.6]{AAGM}).

\begin{proposition}\label{pos_john_ex} Let $K, L \in \mathcal K^n_o$ be convex bodies, and consider the set of positive images of $L$ inside $K$,
\begin{equation}
\mathcal A_{K, L} = \{PL + z: P \in \mathcal P^n, z \in \mathbb R^n\,|\,PL + z\subset K\}.
\end{equation}
Then there is a unique element in $\mathcal A_{K, L}$ of maximal volume.
\begin{proof}Let $r$ such that $r B^n_2 \subset L$ and $R$ such that $K \subset R B^n_2$. Then for any $(P, z)$ such that $PL + z \subset K$ we must have that the operator norm of $P$ satisfies $\|P\| \le \frac{2R}{r}$, $|z| \le R$ and hence a maximizer of $\{\det P: PL + z \subset K\}$ exists by compactness.

As for uniqueness, assume for the sake of contradiction that $L_i = P_i L + x_i$, $i = 1, 2$ both have maximal volume among positive images of $L$ contained in $K$. Consider two cases: if $P_1 \neq P_2$ then by the strict log-concavity of the determinant on $\mathcal P_n$ \cite[Lemma B.4.1]{AAGM}, $\det(\frac{P_1 + P_2}{2}) > \det(P_1) = \det(P_2)$; but $\frac{P_1 +  P_2}{2} L + \frac{x_1 + x_2}{2} \subset K$ by convexity, contradicting the assumption that $L_1, L_2$ have maximal volume. Otherwise, we have $P_1 L + y, P_1 L + x \subset K$ for some $x \neq y$, and we may assume $y = 0$, $x = e_1$; in particular we have $P_1 L + [0, 1] e_1 \subset K$. Let $[a, b] e_1$ be the projection of $L$ on the $e_1$-axis, let $\epsilon$ such that $\epsilon [a,b] \subset [-\frac{1}{2}, \frac{1}{2}]$, and let $P = P_1 + \epsilon e_1 \otimes e_1$. Then
\begin{equation}
P L + \frac{e_1}{2} \subset P_1 L + \epsilon([a, b] e_1) + \frac{e_1}{2} \subset P_1 L + \left(\frac{1}{2} + \left[-\frac{1}{2},\frac{1}{2}\right]\right) e_1 = P_1 L + [0, 1]e_1 \subset K.
\end{equation}
But one has $\det(P) = \det(P_1 + \epsilon e_1 \otimes e_1) > \det(P_1)$ (as one sees, e.g., by simultaneously diagonalizing $P_1$ and $e_1 \otimes e_1$) and hence $\vol(PL + \frac{e_1}{2}) > \vol(P_1 L)$, contradicting the assumption that $P_1 L$ has maximal volume. Hence the maximal volume element of $\mathcal A_{K, L}$ is unique.
\end{proof}
\end{proposition}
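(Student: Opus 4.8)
The plan is to mirror the folklore proof of uniqueness of the maximal-volume ellipsoid (see, e.g., \cite[Proposition 2.1.6]{AAGM}): existence is a routine compactness statement, and uniqueness follows from the strict concavity of $\log\det$ on $\mathcal P^n$ together with the convexity of $K$. A minor twist is that the linear part and the translation part of a positive affine map require slightly different treatments, so the uniqueness argument splits into two cases.

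For existence, fix $r, R > 0$ with $r B^n_2 \subset L$ and $K \subset R B^n_2$. If $PL + z \subset K$ then, since $0 \in L$, we have $z \in K$, so $\|z\| \le R$; and then $P(r B^n_2) \subset PL \subset K - z \subset 2R B^n_2$, bounding the operator norm by $\|P\| \le 2R/r$. Hence the feasible set $\{(P, z) \in \mathcal P^n \times \mathbb R^n : PL + z \subset K\}$ is bounded, and its closure in $\mathcal M^{n \times n} \times \mathbb R^n$ is a compact subset of $\overline{\mathcal P^n} \times \mathbb R^n$ on which the (closed) condition $PL + z \subset K$ still holds. The continuous functional $(P, z) \mapsto \det P$ attains a maximum there, and since $\epsilon L \subset K$ for small $\epsilon > 0$ this maximum is strictly positive; so at the maximizer, $P$ is positive-semidefinite with nonzero determinant, hence positive-definite, and $PL + z \in \mathcal A_{K, L}$ has maximal volume $\det(P) \vol(L)$.

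For uniqueness, let $L_i = P_i L + z_i$ ($i = 1, 2$) both attain the maximal volume; since $\vol(L_i) = \det(P_i) \vol(L)$, we have $\det P_1 = \det P_2$. Suppose first $P_1 \neq P_2$, and put $P = \tfrac{1}{2}(P_1 + P_2)$, $z = \tfrac{1}{2}(z_1 + z_2)$. For each $x \in L$, $Px + z = \tfrac{1}{2}(P_1 x + z_1) + \tfrac{1}{2}(P_2 x + z_2) \in \tfrac{1}{2} L_1 + \tfrac{1}{2} L_2 \subset K$ by convexity of $K$, so $PL + z \in \mathcal A_{K, L}$; but strict concavity of $\log\det$ on $\mathcal P^n$ (\cite[Lemma B.4.1]{AAGM}) gives $\det P > (\det P_1 \det P_2)^{1/2} = \det P_1$, contradicting maximality.

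The remaining case $P_1 = P_2 =: P$, $z_1 \neq z_2$ is where I expect the only genuine obstacle, since the midpoint map above now merely reproduces one of the two images and yields nothing; here one must honestly enlarge $L$. After an orthogonal change of coordinates and a translation, assume $z_1 = 0$ and $z_2 = \lambda e_1$ with $\lambda > 0$, so $PL + [0, \lambda] e_1 \subset K$ by convexity. Writing $[a, b] e_1$ for the orthogonal projection of $L$ onto the $e_1$-axis and choosing $\epsilon > 0$ with $\epsilon[a, b] \subset [-\lambda/2, \lambda/2]$, the matrix $P' = P + \epsilon\,(e_1 \otimes e_1)$ satisfies $P'x = Px + \epsilon\langle e_1, x\rangle e_1 \in PL + \epsilon[a, b] e_1$ for all $x \in L$, hence $P'L + \tfrac{\lambda}{2} e_1 \subset PL + [0, \lambda] e_1 \subset K$; and simultaneously diagonalizing $P$ and $e_1 \otimes e_1$ shows $\det P' > \det P$, again contradicting maximality. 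Everything apart from this last case is routine; the crux is just to notice that equal linear parts reduce the comparison to an honest translate of $L$, and then to exploit the thickness of $K$ transverse to that translate to build a strictly larger positive image.
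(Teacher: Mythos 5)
Your proof is correct and follows essentially the same route as the paper's: the same compactness bounds ($\|P\| \le 2R/r$, $|z|\le R$) for existence, strict log-concavity of the determinant at the midpoint when $P_1 \neq P_2$, and the rank-one perturbation $P + \epsilon\, e_1\otimes e_1$ exploiting the segment $PL + [0,\lambda]e_1 \subset K$ when the linear parts coincide. The only (welcome) addition is your explicit verification that the limiting maximizer is genuinely positive-definite rather than merely semidefinite, a point the paper leaves implicit.
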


As the term ``position of maximal volume among positive images'' is a bit unwieldy, we call the image of $L$ guaranteed by the above proposition the \textit{positive John image} of $L$ in $K$, and say that a body $K$ is in \textit{positive John position} with respect to $L$, or alternatively that $L$ is in positive John position inside $K$, if the positive John image of $L$ in $K$ is $L$.

We note that not only is it the case that the positive John image $L' = P L + z$ inside $K$ is unique, but the pair $(P, z)$ such that $L' = PL + z$ is uniquely determined as well, even if $L$ has a nontrivial symmetry group. This follows immediately from Corollary \ref{pos_aff_sym}.

In the case of the usual position of maximal volume, it obviously holds that $L' = AL + z$ is the affine image of $L$ of maximal volume contained in $K$ if and only if $L'$ is itself in maximal volume position in $K$. For positive John position a subtlety arises due to the fact that the positive-definite matrices do not form a group, so that for a given image $L' = PL + z$, the family of positive images of $L'$, $\{Q(PL + z) + w: Q \in \mathcal P^n, w \in \mathbb R^n\}$, does not coincide with the family of positive images of $L$. Hence it is not necessarily the case that if $L'$ is the positive John image of $L$ in $K$ then $L'$ is itself in positive John position in $K$. Instead, we have the following characterization:

\begin{proposition}\label{prop:pos_john_im} For any two convex bodies $K, L$ with non-empty interior and a positive definite $P$, the body $L' = PL + z$ is the positive John image of $L$ in $K$ if and only if $P^{\frac{1}{2}} L + P^{-\frac{1}{2}} z$ is in positive John position inside $P^{-\frac{1}{2}} K$.
\end{proposition}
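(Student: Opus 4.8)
The plan is to reduce the statement to the uniqueness of the positive John image (Proposition \ref{pos_john_ex}) by setting up a volume-preserving bijection between the two relevant families of positive images. Concretely, I would observe that $L' = PL + z$ is contained in $K$ if and only if $P^{-1/2} L' + (\text{suitable translate})$ is contained in $P^{-1/2} K$; the key point is that conjugating by $P^{-1/2}$ turns the family $\{Q L + w : Q \in \mathcal P^n, w \in \R^n\}$ of positive images of $L$ into the family of positive images of $P^{1/2} L$, because the map $Q \mapsto P^{-1/2} Q P^{-1/2}$ is a bijection of $\mathcal P^n$ onto itself (it is the action of $P^{-1/2}$ by congruence, and $P^{-1/2}QP^{-1/2}$ is positive-definite exactly when $Q$ is). This is precisely the reason the proposition is nontrivial: one cannot simply say ``$L'$ is in positive John position inside $K$,'' but after conjugating by $P^{-1/2}$ the failure of $\mathcal P^n$ to be a group is repaired.

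First I would spell out the dictionary. A positive image of $L$ has the form $QL + w$ with $Q \in \mathcal P^n$; applying $P^{-1/2}$ gives $P^{-1/2} Q L + P^{-1/2} w = (P^{-1/2} Q P^{-1/2})(P^{1/2} L) + P^{-1/2} w$, which is a positive image of $P^{1/2} L$ since $P^{-1/2} Q P^{-1/2} \in \mathcal P^n$. Conversely every positive image $R(P^{1/2}L) + v$ of $P^{1/2} L$ arises this way, by taking $Q = P^{1/2} R P^{1/2} \in \mathcal P^n$ and $w = P^{1/2} v$. Moreover $QL + w \subset K$ if and only if $P^{-1/2}(QL+w) \subset P^{-1/2}K$, so the correspondence restricts to a bijection
\begin{equation*}
\mathcal A_{K, L} \longleftrightarrow \mathcal A_{P^{-1/2}K,\, P^{1/2}L}, \qquad M \longmapsto P^{-1/2} M.
\end{equation*}
Since $\vol(P^{-1/2} M) = (\det P)^{-1/2}\vol(M)$ depends on $M$ only through a fixed multiplicative constant, this bijection preserves the property of being the (unique) maximal-volume element. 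Hence $L' = PL+z$ is the maximal-volume element of $\mathcal A_{K,L}$ if and only if $P^{-1/2} L' = P^{1/2} L + P^{-1/2} z$ is the maximal-volume element of $\mathcal A_{P^{-1/2}K,\, P^{1/2}L}$, which by definition means exactly that $P^{1/2} L + P^{-1/2} z$ is in positive John position inside $P^{-1/2} K$. That is the claim.

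I do not anticipate a serious obstacle here; the argument is essentially bookkeeping once the conjugation-by-$P^{-1/2}$ trick is identified. The one point requiring a little care is making sure the translation parts match up correctly and that the bijection of families is genuinely onto in both directions (i.e.\ that every positive image of $P^{1/2}L$ contained in $P^{-1/2}K$ comes from a positive image of $L$ contained in $K$), but this is immediate from the explicit formulas above together with the fact that congruence by the invertible symmetric matrix $P^{-1/2}$ is a bijection of $\mathcal P^n$. One should also note in passing that $P^{1/2}L \subset P^{-1/2}K$ is equivalent to $PL \subset K$, so the containment hypotheses implicit in ``positive John image'' and ``positive John position'' correspond correctly under the dictionary. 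No appeal to the contact-pair characterization (Theorem \ref{pos_john_contact}) is needed; only Proposition \ref{pos_john_ex} and elementary linear algebra.
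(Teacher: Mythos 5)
Your proof is correct and is essentially the paper's argument: both rest on the observation that congruence by $P^{\pm\frac{1}{2}}$ is a bijection of $\mathcal P^n$ scaling all determinants by the fixed factor $\det P$, so that applying $P^{-\frac{1}{2}}$ gives a volume-ratio-preserving bijection between positive images of $L$ in $K$ and positive images of $P^{\frac{1}{2}}L$ in $P^{-\frac{1}{2}}K$. The paper writes this as a chain of $\argmax$ equivalences rather than as an explicit bijection of the families $\mathcal A_{K,L}$, but the content is the same.
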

 
\begin{proof}The map $Q \mapsto P^{\frac{1}{2}} Q P^{\frac{1}{2}}$ is a bijection from $\mathcal P^n$ to itself and $\frac{\det(P^{\frac{1}{2}} Q P^{\frac{1}{2}})}{\det(Q)} = \det(P)$ is independent of $Q$, so
\begin{equation}
\begin{array}{lrl}
                 &(P, z) &= \argmax \{\det(Q): (Q, z') \in \mathcal P^n \times \mathbb R^n \,|\, QL + z' \subset K\} \nonumber \\
\Leftrightarrow  &(I, z) &= \argmax \{\det(Q): (Q, z') \in \mathcal P^n \times \mathbb R^n \,|\, P^{\frac{1}{2}} Q P^{\frac{1}{2}} L + z' \subset K\} \nonumber \\
\Leftrightarrow &(I, z) &= \argmax \{\det(Q): (Q, z') \in \mathcal P^n \times \mathbb R^n \,|\, Q (P^{\frac{1}{2}} L) + P^{-\frac{1}{2}} z' \subset P^{-\frac{1}{2}} K\} \nonumber \\
\Leftrightarrow &\quad (I, 0) &= \argmax \{\det(Q): (Q, z') \in \mathcal P^n \times \mathbb R^n \,|\, Q (P^{\frac{1}{2}} L + P^{-\frac{1}{2}} z) + z' \subset P^{-\frac{1}{2}} K\}
\end{array}
\end{equation}
which precisely means that $P^{\frac{1}{2}} L + P^{-\frac{1}{2}} z$ is in positive John position inside $P^{-\frac{1}{2}} K$.
\end{proof}

Following \cite{BR,GLMP}, given two convex bodies $K, L \in \mathcal K^n_o$ we say that $(x, y)$ is a \textit{contact pair} of $K, L$ if $x \in \partial L \cap \partial K$, $y \in \partial L^\circ \cap \partial K^\circ$, and $\langle x, y\rangle = 1$. In other words, $x$ is a common boundary point of $K, L$ and $y$ defines a supporting hyperplane to $K$ and $L$ at $x$.

The analogue of John's theorem in this setting, already quoted in the introduction, characterizes the positive John position of $K$ with respect to $L$ in terms of contact pairs. As we stated, the theorem was first given in \cite[Theorem 4]{BR}, and reproven by different methods as \cite[Corollary 4.4]{GLMP}, but the statement in the latter paper contains a small error. To make the paper self-contained, we give a proof here, half of which follows \cite{GLMP} and half of which follows \cite{BR}.

\begin{manualtheorem}{\ref{pos_john_contact}} 
	Let $K \in \mathcal K^n_o$, $L \in \mathcal K^n$. Then $K$ is in positive John position with respect to $L$ if and only if $L \subset K$ and there exist contact pairs $(x_1, y_1), \ldots, (x_m, y_m)$  of $K, L$ and $c_1, \ldots, c_m > 0$ such that:
	\begin{align}
	0 &= \sum_{i = 1}^m c_i y_i \label{iso_trans} \\
	I_n &= \sum_{i = 1}^m c_i (x_i \otimes y_i)_{sym} \label{iso_dil} 
	\end{align}
\end{manualtheorem}  

\begin{remark}\mbox{}
A useful way of restating the conclusion of the theorem is that $(I_n, 0) \in \Sym^{n\times n}(\mathbb R) \times \mathbb R^n$ lies in the positive convex cone spanned by the set
\begin{equation} C_{K, L} = \{((x \otimes y)_{sym}, y): \text{$(x, y)$ a contact pair of $K$ and $L$}\}.
\end{equation}
In the course of the proof we shall see a more precise characterization.
\end{remark}

As in John's original proof, the necessity part of the theorem follows easily from the following result, which is an extension of the method of Lagrange multipliers to the case where the number of constraints may be infinite:

\begin{theorem}[John \cite{J}]\label{john_opt} Let $V$ be a real vector space of dimension $N$ and $U$ an open neighborhood in $V$, $F: U \to \mathbb R$ a $C^1$ function, $S$ a compact metric space and $G: U \times S \to \mathbb R$ a continuous function such that $\nabla_u G(u, s)$ exists for every $u \in U$, $s \in S$ and $\nabla_u G$ is continuous on $S$. (In optimization terms, $F$ is the objective function and $G$ represents the set of constraints.)

Let $A = \{u \in U\,|\, G(u, s) \le 0\, \forall s \in S\}$ (the feasible set) and $u_0 \in A$ such that $F(u_0) = \max_{u \in A} F(u)$. Then either $\nabla_u F(u_0) = 0$, or there exist $s_1, \ldots, s_m \in S$, $m \le N$ and $\lambda_1,\ldots,\lambda_m \in \mathbb R^+$ such that $G(u_0, s_i) = 0$ for all $i$ and
\begin{equation}
\nabla_u F(u_0) = \sum_{i = 1}^m \lambda_i \nabla_u G(u_0, s_i).
\end{equation}
\end{theorem}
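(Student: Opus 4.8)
The plan is to follow John's variational argument. If $\nabla_u F(u_0)=0$ we are in the first alternative, so assume $v:=\nabla_u F(u_0)\neq 0$. Let $S_0=\{s\in S:G(u_0,s)=0\}$ be the set of active constraints; it is compact, being the zero set of the continuous map $s\mapsto G(u_0,s)$ on the compact space $S$, and hence $T:=\{\nabla_u G(u_0,s):s\in S_0\}$ is a compact subset of $V$ (identified with $\mathbb R^N$). The whole statement reduces to showing $v\in\operatorname{cone}(T):=\{\sum_i\lambda_i g_i:\lambda_i\ge 0,\ g_i\in T\}$: granting that, Carath\'eodory's theorem for convex cones in the $N$-dimensional space $V$ gives a representation $v=\sum_{i=1}^m\lambda_i g_i$ with $m\le N$, and choosing $s_i\in S_0$ with $\nabla_u G(u_0,s_i)=g_i$ yields the desired indices and multipliers (each $s_i$ being active by construction).

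The first substantive step is to extract from optimality the variational inequality
\[
\langle\nabla_u G(u_0,s),w\rangle<0\ \text{for every}\ s\in S_0\quad\Longrightarrow\quad\langle v,w\rangle\le 0 .
\]
Given such a $w$, I would move along the ray $t\mapsto u_0+tw$ inside a compact ball $\bar B(u_0,\rho)\subset U$ on which $G$ and $\nabla_u G$ are uniformly continuous and bounded, and check that $u_0+tw$ stays feasible for all small $t>0$: for $s\in S_0$ the first-order term $t\langle\nabla_u G(u_0,s),w\rangle$ is $\le-\delta t$ uniformly (compactness of $S_0$) and dominates the remainder of the expansion (uniform continuity of $\nabla_u G$), so $G(u_0+tw,s)<0$; this persists on a neighborhood $\{G(u_0,\cdot)>-\eta\}$ of $S_0$, while on its complement, where $G(u_0,\cdot)\le-\eta$, feasibility for small $t$ follows from uniform continuity of $G$. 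Since $u_0$ maximizes $F$ on $A$, we get $F(u_0+tw)\le F(u_0)$; dividing by $t$ and letting $t\to 0^+$ gives $\langle v,w\rangle\le 0$.

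It remains to pass from this variational inequality to $v\in\operatorname{cone}(T)$, and this is the main obstacle. When a Slater direction $w_0$ exists --- $\langle g,w_0\rangle<0$ for all $g\in T$, equivalently $0\notin\conv(T)$, which in addition makes $\operatorname{cone}(T)$ closed --- I argue by contradiction: were $v\notin\operatorname{cone}(T)$, a separating hyperplane would give $w$ with $\langle v,w\rangle>0$ and $\langle g,w\rangle\le 0$ on $T$; then $w+\varepsilon w_0$ still has $\langle v,w+\varepsilon w_0\rangle>0$ for $\varepsilon>0$ small while now strictly decreasing every active constraint, so the variational inequality forces $\langle v,w+\varepsilon w_0\rangle\le 0$ --- a contradiction. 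The delicacy is precisely that, absent a constraint qualification, separation produces only non-strict inequalities and $\operatorname{cone}(T)$ need not be closed, so some such device is unavoidable; in the absence of a Slater direction (i.e.\ $0\in\conv(T)$) I would isolate a minimal positively dependent subset of $T$, observe that any separating direction must then be orthogonal to its span, and induct on $N$ by restricting the problem to the corresponding affine subspace through $u_0$. I note that for every application in this paper the Slater condition holds automatically --- for the positive John problem, $\sum\mu_i((x_i\otimes y_i)_{sym},y_i)=0$ with $\mu_i>0$ is impossible, since taking the trace of the first coordinate gives $0=\sum\mu_i\langle x_i,y_i\rangle=\sum\mu_i$ --- so the degenerate case is needed only for the stated generality.
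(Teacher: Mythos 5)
The paper does not actually prove this statement: it is quoted from John's 1948 paper and used as a black box in the necessity half of Theorem \ref{pos_john_contact}, so there is no internal proof to compare against. Your argument for the main case is the classical one and is carried out correctly: the variational inequality (uniform first-order expansion on a compact ball, splitting $S$ into a neighbourhood of the active set $S_0$ and the complement where $G(u_0,\cdot)\le-\eta$), the reduction to $v\in\operatorname{cone}(T)$ followed by Carath\'eodory's theorem for cones to obtain $m\le N$, and the separation argument under the Slater condition (perturbing a separating direction by $\varepsilon w_0$ to make it strictly feasible) are all sound. Your observation that the Slater condition holds automatically in the paper's application --- since $\Tr\bigl((x_i\otimes y_i)_{sym}\bigr)=\langle x_i,y_i\rangle=1$ for a contact pair, a relation $\sum\mu_i\bigl((x_i\otimes y_i)_{sym},y_i\bigr)=0$ with $\mu_i\ge 0$ forces all $\mu_i=0$ --- is exactly right, and it is the reason the paper can safely use the theorem in the form stated.

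The genuine gap is the degenerate case $0\in\conv(T)$, and it cannot be repaired along the lines you sketch, because the statement is false there. Take $N=1$, $S$ a single point, $G(u)=u^2$, $F(u)=u$: the feasible set is $\{0\}$, so $u_0=0$ is a maximizer with $\nabla_u F(u_0)=1\neq 0$, while the only active gradient is $\nabla_u G(0)=0$, so no representation $1=\lambda\cdot 0$ exists. (Note that in this example your proposed induction does not lose a dimension: the minimal positively dependent subset of $T=\{0\}$ has trivial span, so the ``restricted'' problem is the original one.) The correct classical statement --- what John actually proves --- is the Fritz John form: there exist $\lambda_0,\lambda_1,\dots,\lambda_m\ge 0$, not all zero, with $m\le N$ and $\lambda_0\nabla_u F(u_0)=\sum_{i=1}^m\lambda_i\nabla_u G(u_0,s_i)$; the version quoted in the paper implicitly normalizes $\lambda_0=1$, which is legitimate only under a constraint qualification such as the Slater condition you verified. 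So your proof covers every use made of the theorem in this paper, but the last clause of your plan (induction on $N$ in the non-Slater case) should be replaced either by reinstating the multiplier $\lambda_0$ in the statement or by adding the hypothesis that the active gradients are not positively linearly dependent.
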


\begin{proof}[Proof of Theorem \ref{pos_john_contact}] For necessity, we apply Theorem \ref{john_opt} with $V = \Sym^{n\times n} \times \mathbb R^n$ and $U = \mathcal P^n \times \mathbb R^n \subset V$. The objective function $F: U \to \mathbb R$ will be defined by $F(P, z) = \det P$, and the constraint function by $G: U \times \partial L \times \partial K^\circ \to \mathbb R$, $G(P, z, x, y) = \langle y, Px + z\rangle - 1$. Since $a \in \mathbb R^n$ lies in $K$ if and only if $\langle y, a\rangle \le 1$ for all $y \in \partial K^\circ$, then $PL + z \subset K$ if and only if $P(\partial L) + z \subset K$, which happens if and only if $G(P, z, x, y) \le 0$ for all $(x, y) \in \partial L \times \partial K^\circ$.

Thus, since $L$ is in positive John position, $\max \{F(P): G(P, z, x, y) \le 0 \,\forall x, y\}$, is attained at $(P, z) = (I, 0)$. The gradient of $F$ is given by $\nabla_{P, z} F(I, 0) = (I, 0)$ (recall that we give $\Sym^{n\times n}$ the Euclidean structure induced by the Hilbert-Schmidt inner product), which is non-vanishing, so there must exist $(x_i, y_i) \in L \times K^\circ$, $i = 1, \ldots, m$ such that
\begin{equation}
(I, 0) = \sum_{i = 1}^m \lambda_i \nabla_{P, z} G(I, 0, x_i, y_i).
\end{equation}
and $G(I, 0, x_i, y_i) = 1$.

On $\mathcal M^{n\times n}$, the gradient of $G(\cdot, z_0, x_i, y_i)$ is $x_i \otimes y_i$ (since $\langle P, x_i \otimes y_i\rangle = \langle y, Px\rangle$); as we are working in the ambient space which is $\Sym^{n\times n}$, the gradient of $G$ in the first variable is the symmetric part of this matrix, $(x \otimes y)_{sym}$. Hence we have for some $\lambda_i \ge 0$ that
\begin{equation}
(I, 0) = \sum_{i = 1}^m \lambda_i ((x_i \otimes y_i)_{sym}, y_i).
\end{equation}

It remains only to show that the $(x_i, y_i)$ are contact pairs. First of all,  $G(I, 0, x_i, y_i)  = 0 $ and so $\langle y_i, x_i\rangle = 1$. By assumption $x_i \in \partial L$ and $y_i \in \partial K^\circ$. As $L \subset K$, we have $K^\circ \subset L^\circ$, and so $y_i \in L^\circ$, and as there exists $x_i \in L$ such that $\langle y_i, x_i\rangle = 1$, $y_i \in \partial L^\circ$; similarly, as $x_i \in K$ and $\langle y_i, x_i\rangle = 1$ with $y_i \in K^\circ$, we have $x_i \in \partial K$. Hence the $(x_i, y_i)$ are contact pairs, and the necessity part of the theorem is proved.

For sufficiency, we restate the conditions \eqref{iso_trans}, \eqref{iso_dil}. Define
\begin{equation}\label{ckl_def}
C_{K, L} = \{((x \otimes y)_{sym}, y): \text{$(x, y)$ is a contact pair of $K$ and $L$}\} \subset \Sym^{n\times n} \times \mathbb R^n.
\end{equation}
Note that if $\sum c_i (x \otimes y)_{sym} = I_n$ then taking traces shows that $\sum c_i = n$. Hence, we wish to show that if $L \subset K$ but $K$ is not in positive John position with respect to $L$, then $(\frac{I_n}{n}, 0) \not\in \conv(C_{K, L})$. If $C_{K, L}$ is empty, we are done, so assume $\partial K \cap \partial L$ is nonempty. As $K$ is not in positive John position with respect to $L$, there exist $P \in \mathcal P^n$, $z \in \mathbb R^n$ such that $L' = P L + z \subset K$ and $\vol(L') > \vol(L)$, i.e., $\det(P) > 1$. In particular, the bodies $L_\lambda = ((1 - \lambda) I + \lambda P)L + \lambda z \subset (1 - \lambda) L + \lambda L'$ are contained in $K$.

Next, note that for any $x \in \partial K \cap \partial L$, any supporting hyperplane of $K$ passing through $x$ must be a supporting hyperplane of $L$ as well. Indeed, if $\langle y, x\rangle = h_K(y)$ then $h_K(y) = \langle y, x\rangle \le h_L(u) \le h_K(u)$, where the last inequality holds because $L \subset K$.

Since $(I + \lambda(P - I)) L + \lambda z \subset K$, we obtain that for any $x \in \partial L \cap \partial K$, $(1 - \lambda) x + \lambda (Px + z) \in K$, so for any $y$ supporting $L$ and $K$ at $x$,
\begin{equation}
\varphi(\lambda) = \langle y, x + \lambda (P - I)x  + \lambda z\rangle \le 1
\end{equation}
for all $\lambda \in [0, 1]$. Since $\varphi(0) = 1$, we must have $\varphi'(0) \le 0$, which yields
\begin{equation}
\langle y, (P - I) x\rangle + \langle y, z\rangle \le 0
\end{equation}
for any contact pair $(y, x)$ of $K, L$. Define the linear functional $\psi$ on $\Sym^{n\times n}(\mathbb R) \times \mathbb R^n$ as
\begin{equation}\psi(M, z) = \langle M, P\rangle_{HS} + \langle y, z\rangle,
\end{equation}
where $\langle M, P\rangle_{HS} = \Tr(M^T P) = \sum_{ij} M_{ij} P_{ij}$ is the Hilbert-Schmidt inner product. Recalling the definition of $C_{K, L}$ from Equation \eqref{ckl_def}, we see that $\psi(\alpha) \le 1$ for all $\alpha = ((x \otimes y)_{sym}, y) \in C_{K, L}$. It is thus sufficient to show that $\psi(\frac{I_n}{n}, 0) > 1$, which would mean that there is a separating hyperplane between $C_{K, L}$ and $(\frac{I_n}{n}, 0)$, and in particular, $(\frac{I_n}{n}, 0)$ does not belong to $\conv(C_{K, L})$.

We have $\psi\left(\frac{I_n}{n}, 0\right) = \frac{1}{n}\Tr P$. Let $\lambda_1,\ldots,\lambda_n > 0$ be the eigenvalues of $P$; by the inequality of arithmetic and geometric means, we have that
\begin{equation}\label{tr_det_am_gm}
\frac{1}{n}\Tr P = \frac{1}{n}\sum_{i = 1}^n \lambda_i \ge \left(\prod_{i = 1}^n \lambda_i\right)^{\frac{1}{n}} = \left(\det(P)\right)^{\frac{1}{n}} > 1.
\end{equation}
Hence $\psi(\frac{I_n}{n}, 0) > \psi((x \otimes y)_{sym}, y)$ for all $(x, y) \in C_{K, L}$. Finally, note that $C_{K, L}$ is the image of the compact set 
$$((\partial K \cap \partial L) \times (\partial K^\circ \cap \partial L^\circ)) \cap \{(x, y) \in \mathbb R^n \times \mathbb R^n | \langle x, y\rangle = 1\}$$
 under a continuous map, so $C_{K, L}$ is compact. Thus implies a hyperplane separation between $(\frac{I_n}{n}, 0)$ and $C_{K, L}$, as desired. The proof is complete.
\end{proof}

\begin{remark}Gordon, Litvak, Meyer, and Pajor showed that by translating a maximal-volume pair $L \subset K$, one can assume that not only the contact points in $\partial K^\circ \cap \partial L^\circ$ are ``centered'' when weighted appropriately, but also the corresponding contact points in $\partial K \cap \partial L$. More precisely, suppose $L$ is in a position of maximal volume in $K$, with contact pairs $(x_i, y_i)_{i = 1}^m$ and weights $c_i$ decomposing the identity as in Theorem \ref{gen_john_thm}. Then setting 
\begin{align}\label{contact_rescale}
z &= \frac{1}{n + 1}\sum_{i = 1}^m c_i x_i \in \frac{n}{n + 1} L \nonumber \\
u_i &= x_i - z \nonumber \\
\gamma_i &= (1 - \langle y_i, z\rangle)^{-1} \\
v_i &= \gamma_i y_i \nonumber \\
a_i &= \gamma_i^{-1} c_i, \nonumber
\end{align} 
one sees that $L - z$ is in a position of maximal volume in $K - z$, that $(u_i, v_i)_{i = 1}^m$ are contact pairs of $(L - z, K - z)$, and that the weights $a_i$ and contact pairs $(u_i, v_i)$ satisfy
\begin{align}
0 &= \sum_{i = 1}^m a_i u_i = \sum_{i = 1}^m a_i v_i, \\
I_n &= \sum_{i = 1}^m a_i (u_i \otimes v_i).
\end{align}
The same exact proof goes through in our situation: if $L$ is in positive John position in $K$, by translating $K$ and $L$ and transforming the contact pairs and weights according to the formulas \eqref{contact_rescale}, we get a positive John position of $L$ in $K$ with contact pairs $(u_i, v_i)$ and weights $a_i$ which, besides satisfying the conclusions of Theorem \ref{pos_john_contact}, also satisfy $\sum a_i u_i = 0$. For details, see \cite[Theorem 3.8]{GLMP} and its proof.
\end{remark}

The existence and uniqueness of the positive John image of $L$ in $K$ makes it a natural position to consider; however, it obviously doesn't tell us anything about images of $L$ under matrices which don't lie in $\mathcal P^n$. By the polar decomposition, we can write the set of affine images of a convex body $L$ as $\{P(UL) + z: P \in \mathcal P^n, U \in O_n, z \in \mathbb R^n\}$, and for each $U \in O_n$ there exist unique $P^*(U), z^*(U)$ such that $P^*(U) UL + z^*(U)$ is the positive John image of $UL$ in $K$. The function $(P^*, z^*): O_n \to \mathcal P^n \times \mathbb R^n$ thus encapsulates all the ``interesting'' information about the affine images of $L$ contained in $K$. We call the family of bodies $P^*(U) (UL) + z^*(U)$ the \textit{positive John family} of $L$ in $K$; we will occasionally also abuse terminology and use the term positive John family to refer to the function $(P^*, z^*)$.

Note that $\max_{U \in O_n} \det(P^*(U))$ picks out the position of maximal volume of $L$ in $K$ considered in \cite{GLMP}, but we shall see that other properties of $(P^*, z^*)$ are of interest as well.

As the function $(P^*, z^*)$ is a solution to a parametrized optimization problem, the most natural toolbox with which to investigate its properties is the toolbox of mathematical economics, which uses the language of set-valued analysis. We will introduce some basic concepts of set-valued analysis below and use them freely in proving our results, though the proofs may be reformulated to avoid their use. Later, in Section \ref{section:max_int_pos} we will encounter results whose statement, and not just proof, requires concepts from set-valued analysis, so that the use of set-valued analysis cannot be avoided in any case. Our main source for the material below is \cite[Chapter 17]{AB}.

\subsection{Set-valued analysis}\label{subsection:set_valued}
Let $X, Y$ be topological spaces. The fundamental objects of study in set-valued analysis, unsurprisingly, are set-valued functions $f: X \to P(Y)$. Such functions are called correspondences and written $f: X \rightrightarrows Y$. We say that a correspondence is open-valued, closed-valued, compact-valued, convex-valued, etc., if $f(x)$ is open, closed, compact, convex, etc. for each $x \in X$.

\begin{definition} A correspondence $f: X \rightrightarrows Y$ is called \textit{upper hemicontinuous} at $a \in X$ if for any open set $V$ containing $f(a)$ there exists a neighborhood $U$ of $a$ such that for all $x \in U$, $f(x) \subset V$.

Conversely, $f$ is called \textit{lower hemicontinuous} at $a$ if for any open set $V$ intersecting $f(a)$ there exists a neighborhood $U$ of $a$ such that $f(x)$ intersects $V$ for all $x$ in $U$.
\end{definition}

It is immediate that for an ordinary function, considered as a set-valued function, both upper or lower hemicontinuity are equivalent to continuity.

It will be useful to cite an equivalent characterization of hemicontinuity in terms of sequences:

\begin{proposition}Let $X, Y$ be metric spaces, $f: X \rightrightarrows Y$ a correspondence. If $f$ is compact-valued, then $f$ is upper hemicontinuous iff for all $a_n \to a$, $b_n \in f(a_n)$ such that $b_n \to b$ we have $b \in f(a)$.

Conversely, $f$ is lower hemicontinuous at $a$ iff for all $a_n \to a$, and $b \in f(a)$, there exist a subsequence $\{a_{n_k}\}$ of $\{a_n\}$ and $b_k \in f(a_{n_k})$ such that $b_k \to b$.
\end{proposition}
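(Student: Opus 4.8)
The plan is to prove each of the two equivalences by splitting it into its two implications, in each case replacing neighborhoods of $a$ by the balls $B(a,1/n)$ to pass between the topological and sequential formulations (this is where the metrizability, or just first countability, of $X$ enters). All four implications are then short arguments by contradiction.

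For the upper hemicontinuity statement, in the forward direction I would argue as follows: given $a_n\to a$, $b_n\in f(a_n)$ with $b_n\to b$ but $b\notin f(a)$, compactness (hence closedness) of $f(a)$ gives an $\epsilon>0$ with $d(b,f(a))>2\epsilon$; the open set $V=\{y:d(y,f(a))<\epsilon\}$ contains $f(a)$, so upper hemicontinuity produces a neighborhood $U$ of $a$ with $f(x)\subset V$ for all $x\in U$, whence $d(b_n,f(a))<\epsilon$ for large $n$ — contradicting $b_n\to b$. For the converse, if $f$ fails to be upper hemicontinuous at $a$ there is an open $V\supset f(a)$ and points $a_n\to a$ with $b_n\in f(a_n)\setminus V$; extracting a convergent subsequence $b_{n_k}\to b$, the sequential hypothesis forces $b\in f(a)\subset V$, so $b_{n_k}\in V$ eventually, a contradiction.

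For the lower hemicontinuity statement, in the forward direction, given $a_n\to a$ and $b\in f(a)$, each ball $B(b,1/k)$ meets $f(a)$, so lower hemicontinuity yields a neighborhood $U_k$ of $a$ on which $f(x)$ meets $B(b,1/k)$; choosing an increasing sequence $n_k$ with $a_{n_k}\in U_k$ and then $b_k\in f(a_{n_k})\cap B(b,1/k)$ gives $b_k\to b$. For the converse, if lower hemicontinuity fails at $a$ there is an open $V$, a point $b\in V\cap f(a)$, and points $a_n\to a$ with $f(a_n)\cap V=\emptyset$; the sequential hypothesis applied to $(a_n)$ and $b$ produces a subsequence $a_{n_k}$ and $b_k\in f(a_{n_k})$ with $b_k\to b\in V$, so $b_k\in V$ eventually — impossible since $f(a_{n_k})\cap V=\emptyset$.

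I expect the one genuinely delicate point to be the converse direction of the upper hemicontinuity equivalence: extracting a convergent subsequence from $(b_n)$ is not free and requires the standard additional hypothesis that $f$ is locally bounded near $a$ (equivalently, that its values on some neighborhood of $a$ lie in a compact set), which holds in every situation where we apply the proposition and is discussed in \cite[Chapter 17]{AB}. The remaining implications are a formal unwinding of the definitions.
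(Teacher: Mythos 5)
Your argument is correct, and it is essentially the standard one; note that the paper does not prove this proposition at all but simply quotes it as background from \cite{AB}, so there is no ``paper proof'' to diverge from. Three of the four implications are, as you say, routine unwindings of the definitions via the balls $B(a,1/k)$, and your treatment of the lower-hemicontinuity equivalence in its subsequence form is exactly right. The point you flag at the end deserves emphasis: it is not merely a delicate step in your proof but a genuine gap in the proposition as literally stated. Without assuming that $f$ maps some neighborhood of $a$ into a compact set (or that $Y$ is compact), the sequential condition does \emph{not} imply upper hemicontinuity: the compact-valued correspondence $f:\mathbb R\rightrightarrows\mathbb R$ with $f(x)=\{1/x\}$ for $x\neq 0$ and $f(0)=\{0\}$ satisfies the closed-limit condition at $0$ vacuously (if $a_n\to 0$ and $b_n\in f(a_n)$ converges, then all but finitely many $a_n$ must vanish), yet it is not upper hemicontinuous at $0$. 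So the local-boundedness hypothesis you add is genuinely needed for that implication, and, as you observe, it is satisfied wherever the proposition is invoked in the paper --- for instance, the feasible-set correspondence $C(K,L)$ in the continuity proof for the positive John image takes values in a fixed compact set of pairs $(P,z)$. With that hypothesis made explicit, your proof is complete and correct.
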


We will later have use for some natural operations on correspondences. Given two correspondences $f, g: X \rightrightarrows Y$, the intersection correspondence is defined in the obvious way as $(f \cap g)(x) = f(x) \cap g(x)$. Similarly, for a correspondence $f$, the convex hull correspondence $\conv f$ is defined as $(\conv f)(x) = \conv f(x)$.

\begin{proposition}Let $f: X \rightrightarrows Y$ be a compact-valued upper hemicontinuous correspondence. Then:
  \begin{enumerate}
    \item If $Y$ is a metric space and $g: X \rightrightarrows Y$ is a closed-valued correspondence, then $f \cap g$ is upper hemicontinuous.
    \item If $Y = \mathbb R^n$, the convex hull correspondence $\conv f$ is upper hemicontinuous.
  \end{enumerate}
\end{proposition}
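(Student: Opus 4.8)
The plan is to handle the two parts separately; in each case one first notes that the new correspondence is compact-valued and then checks upper hemicontinuity, either directly from the definition or through the sequential criterion recalled above.

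For part (1): since $g(x)$ is closed in the metric space $Y$ and $f(x)$ is compact, $f(x)\cap g(x)$ is a closed subset of a compact set and hence compact, so $f\cap g$ is compact-valued. To verify upper hemicontinuity at a point $a$, fix an open set $V\supseteq (f\cap g)(a)$. Then $C:=f(a)\setminus V$ is compact and disjoint from $g(a)$, so --- using that the graph of $g$ is closed, which (rather than mere closedness of the values of $g$) is the property actually used, and which follows from closed-valuedness in a metric range once $g$ is itself upper hemicontinuous, as holds in our applications --- one may cover $C$ by finitely many basic open boxes disjoint from $\mathrm{graph}(g)$ and thereby obtain a neighbourhood $U_1$ of $a$ together with an open set $W\supseteq C$ such that $g(x)\cap W=\emptyset$ for all $x\in U_1$. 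Since $f(a)\subseteq V\cup W$, upper hemicontinuity of $f$ gives a neighbourhood $U_2$ of $a$ with $f(x)\subseteq V\cup W$ for $x\in U_2$, whence $(f\cap g)(x)=f(x)\cap g(x)\subseteq (V\cup W)\cap g(x)=V\cap g(x)\subseteq V$ for every $x\in U_1\cap U_2$, as required. (Equivalently: if $a_n\to a$, $b_n\in f(a_n)\cap g(a_n)$ and $b_n\to b$, then $b\in f(a)$ by the sequential criterion applied to the compact-valued $f$, and $b\in g(a)$ since $g$ has closed graph.)

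For part (2): I would realize $\conv f$ as a continuous image of a finite product of copies of $f$. By Carath\'eodory's theorem, for any $S\subseteq\mathbb R^n$ one has $\conv S=\Phi(\Delta\times S^{n+1})$, where $\Delta=\{\lambda\in\mathbb R^{n+1}:\lambda_j\ge 0,\ \sum_j\lambda_j=1\}$ and $\Phi(\lambda,y_0,\dots,y_n)=\sum_{j=0}^{n}\lambda_j y_j$ is continuous; thus $\conv f$ is the correspondence $x\mapsto\Phi\big(\Delta\times f(x)^{n+1}\big)$. It then suffices to record two elementary stability facts, each proved in one line from the definition: (a) if $\varphi:X\rightrightarrows Y$ is upper hemicontinuous and compact-valued and $h:Y\to Z$ is continuous, then $x\mapsto h(\varphi(x))$ is upper hemicontinuous and compact-valued (pull an open set $V\supseteq h(\varphi(a))$ back to $h^{-1}(V)\supseteq\varphi(a)$ and use upper hemicontinuity of $\varphi$); and (b) a finite Cartesian product $x\mapsto\prod_i\varphi_i(x)$ of upper hemicontinuous compact-valued correspondences is again such (separate the compact box $\prod_i\varphi_i(a)$ inside a given open neighbourhood by a product of open sets via the tube lemma, then intersect the neighbourhoods supplied by the $\varphi_i$). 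Applying (b) to the constant correspondence $x\mapsto\Delta$ and $n+1$ copies of $f$, and then (a) with $h=\Phi$, shows that $\conv f$ is upper hemicontinuous and compact-valued. (One may instead argue sequentially, using that upper hemicontinuity together with compactness of $f(a)$ makes $\bigcup_{n}f(a_n)$ eventually bounded, so the Carath\'eodory weights and vertices of $b_n$ have convergent subsequences.)

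Neither part is difficult, and the steps above are all definition-chasing or routine compactness; the bulk of the content is the elementary observation that these operations preserve upper hemicontinuity. The one point I would flag is the hypothesis in part (1): literal closed-valuedness of $g$ does not suffice --- for instance, $g(x)=\{x\}$ for $x>0$ and $g(0)=\{1\}$ is closed-valued but makes $f\cap g$ fail to be upper hemicontinuous at $0$ when $f\equiv [0,1]$ --- so what is needed is that $g$ has closed graph, and I would state it that way. For part (2) the substantive observation is simply that Carath\'eodory's theorem caps the number of terms in the representing convex combinations at $n+1$, which is exactly what permits the reduction to the finite-product stability lemma.
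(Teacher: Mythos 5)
The paper states this proposition without proof, as background cited from \cite{AB}; your arguments are the standard ones for these facts (intersection with a closed-graph correspondence via a finite cover of $f(a)\setminus V$ by boxes missing the graph, and $\conv f$ realized via Carath\'eodory as a continuous image of $\Delta\times f(\cdot)^{n+1}$), and they are correct. Your flag on part (1) is also right, and is in fact a correction to the statement as printed: what is needed is that $g$ have closed \emph{graph} (a ``closed correspondence'' in the terminology of \cite{AB}), not merely closed values, and your example $f\equiv[0,1]$, $g(x)=\{x\}$ for $x>0$, $g(0)=\{1\}$ does show that the literal ``closed-valued'' hypothesis is insufficient. The only minor looseness is your parenthetical sequential argument for (1), which by itself establishes only that $f\cap g$ has closed graph; one still needs the local boundedness coming from the compact-valued upper hemicontinuous $f$ to upgrade this to upper hemicontinuity, but your direct argument is complete, so nothing is missing.
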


A fundamental tool in optimization is the Berge maximum theorem, of which we now cite a version sufficient for our purposes \cite[Theorem 17.31]{AB}:

\begin{theorem}[Maximum theorem]
Let $X$ and $\Theta$ be topological spaces, $f: X \times \Theta \to \mathbb R$ be a continuous function on $X \times \Theta$, and $C: \Theta \rightrightarrows X$ be a compact-valued correspondence (the family of feasible sets) such that $C(\theta) \ne \emptyset$ for all $\theta \in \Theta$. Define the value function $f^*: \Theta \to \mathbb R$ by
\begin{equation}
f^*(\theta) = \sup \{f(x, \theta) : x\in C(\theta)\}
\end{equation}
and the set of maximizers $C^*: \Theta \rightrightarrows X$ by
\begin{equation}
C^*(\theta) =
\argmax\{f(x,\theta): x \in C(\theta)\} = \{x \in C(\theta) : f(x, \theta) = f^*(\theta)\}.
\end{equation}

If $C$ is continuous (i.e. both upper and lower hemicontinuous), then $f^*$ is continuous and $C^*$ is upper hemicontinuous with nonempty and compact values.
\end{theorem}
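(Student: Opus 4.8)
The plan is to establish the three conclusions in turn: (a) that $C^*(\theta)$ is nonempty and compact for every $\theta$; (b) that the value function $f^*$ is continuous, which I would get by proving lower and upper semicontinuity separately — using lower hemicontinuity of $C$ for the former, and upper hemicontinuity together with compactness of the values of $C$ for the latter; and (c) that $C^*$ is upper hemicontinuous, which I would deduce from (b) by writing $C^*$ as the intersection of $C$ with an auxiliary closed-graph correspondence and invoking the intersection result for correspondences stated just above.

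For (a): since $C(\theta)$ is nonempty and compact and $x \mapsto f(x,\theta)$ is continuous, the supremum defining $f^*(\theta)$ is finite and attained, so $C^*(\theta) \ne \emptyset$; moreover $C^*(\theta) = \{x \in C(\theta) : f(x,\theta) - f^*(\theta) = 0\}$ is the intersection of the compact set $C(\theta)$ with the zero-set of a continuous function, hence compact. For the lower semicontinuity of $f^*$ at a point $\theta_0$: given $\varepsilon > 0$, I would pick a maximizer $x_0 \in C^*(\theta_0)$, so $f(x_0,\theta_0) = f^*(\theta_0)$; by continuity of $f$ there are open sets $W \ni x_0$ and $V_1 \ni \theta_0$ with $f(x,\theta) > f^*(\theta_0) - \varepsilon$ on $W \times V_1$. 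Since $W$ meets $C(\theta_0)$, lower hemicontinuity of $C$ at $\theta_0$ yields a neighbourhood $V_2 \ni \theta_0$ with $C(\theta) \cap W \ne \emptyset$ for all $\theta \in V_2$; choosing $x \in C(\theta) \cap W$ for $\theta \in V_1 \cap V_2$ gives $f^*(\theta) \ge f(x,\theta) > f^*(\theta_0) - \varepsilon$.

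For the upper semicontinuity of $f^*$ at $\theta_0$: for each $x \in C(\theta_0)$, continuity of $f$ together with $f(x,\theta_0) \le f^*(\theta_0)$ gives open $W_x \ni x$ and $V_x \ni \theta_0$ with $f(x',\theta) < f^*(\theta_0) + \varepsilon$ on $W_x \times V_x$; finitely many $W_{x_1},\dots,W_{x_k}$ cover the compact set $C(\theta_0)$, and by upper hemicontinuity of $C$ there is a neighbourhood $V''$ of $\theta_0$ with $C(\theta) \subseteq \bigcup_i W_{x_i}$ for $\theta \in V''$; then for $\theta \in V'' \cap \bigcap_i V_{x_i}$, every $x \in C(\theta)$ lies in some $W_{x_i}$, so $f(x,\theta) < f^*(\theta_0) + \varepsilon$, whence $f^*(\theta) \le f^*(\theta_0) + \varepsilon$. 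Combining the two inequalities gives continuity of $f^*$. For (c), with $f^*$ now known continuous, I would set $D(\theta) = \{x \in X : f(x,\theta) \ge f^*(\theta)\}$; this is closed-valued (sublevel-type set of a continuous function of $x$) and in fact has closed graph by continuity of $f$ and $f^*$, so since $C^* = C \cap D$ with $C$ compact-valued and upper hemicontinuous, the proposition on intersections of correspondences above shows $C^*$ is upper hemicontinuous, its values being nonempty and compact by (a).

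The hard part will be the upper semicontinuity of $f^*$: unlike lower semicontinuity, it cannot be read off from a single maximizer, and one genuinely needs compactness of $C(\theta_0)$ to promote a local bound on $f$ near each feasible point to a bound uniform over all of $C(\theta)$ for nearby $\theta$ — this is precisely where the compact-valuedness and upper hemicontinuity of the feasible-set correspondence enter. A secondary point to watch in step (c) is that what one really uses is that $D$ is a \emph{closed-graph} correspondence (which holds here by continuity of $f$ and $f^*$), so that intersecting it with the compact-valued upper hemicontinuous $C$ does preserve upper hemicontinuity.
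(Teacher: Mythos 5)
Your proof is correct. Note, however, that the paper does not prove this statement at all: it is quoted verbatim from Aliprantis--Border \cite[Theorem 17.31]{AB} as a black box, so there is no ``paper proof'' to compare against. Your argument is the standard Berge proof and all steps check out: attainment and compactness of $C^*(\theta)$ from compactness of $C(\theta)$; lower semicontinuity of $f^*$ from a single maximizer plus lower hemicontinuity of $C$; upper semicontinuity of $f^*$ from a finite subcover of $C(\theta_0)$ plus upper hemicontinuity; and upper hemicontinuity of $C^*$ by writing $C^* = C \cap D$ with $D(\theta) = \{x : f(x,\theta) \ge f^*(\theta)\}$. Your closing parenthetical is in fact the one place where care is genuinely needed, and you got it right: the intersection lemma requires $D$ to have \emph{closed graph}, not merely closed values (the paper's own Proposition on intersections is stated with ``closed-valued,'' which taken literally is too weak --- a closed-valued but wildly discontinuous single-valued $g$ gives a counterexample --- and it also assumes the target is metric, whereas the closed-graph version of the intersection lemma holds for general topological spaces and is what your argument actually uses). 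Since $f$ and $f^*$ are continuous, $\mathrm{Gr}(D)$ is the preimage of $[0,\infty)$ under $(\theta,x) \mapsto f(x,\theta) - f^*(\theta)$ and hence closed, so the argument goes through.
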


We can now state and prove the first result of the subsection:

\begin{proposition}\label{pos_john_contin_on}
Let $K, L \in \mathcal K^n$, and for each $U \in O_n$, let $P^*(U), z^*(U)$ be defined such that $P^*(U) UL + z^*(U)$ is the positive image of $L$ with maximum volume in $K$. Then $P^*$ and $z^*$ are continuous functions on $O_n$.
\end{proposition}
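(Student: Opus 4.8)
The plan is to apply the Berge maximum theorem to the optimization problem defining $P^*, z^*$. First I would set up the parameter space as $\Theta = O_n$ (with $K, L$ fixed), the decision variable space as $X = \mathcal P^n \times \mathbb R^n$ — or rather a suitable compact subset thereof — and the objective function $f((Q, w), U) = \det Q$, which is manifestly continuous. The feasible-set correspondence is $C(U) = \{(Q, w) \in \mathcal P^n \times \mathbb R^n : Q(UL) + w \subset K\}$. Using the bounds $r B^n_2 \subset L$, $K \subset R B^n_2$ from the proof of Proposition \ref{pos_john_ex}, one sees that for every $U \in O_n$ the feasible operators satisfy $\|Q\| \le 2R/r$ and $|w| \le R$, so after intersecting with a fixed large closed ball (and noting $0 \in \intr K$ ensures $C(U) \ne \emptyset$, e.g. $(\epsilon I, 0) \in C(U)$ for small $\epsilon$) we may take $X$ to be a fixed compact set and $C$ to be compact-valued with nonempty values.

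The crux is verifying that $C: O_n \rightrightarrows X$ is continuous, i.e. both upper and lower hemicontinuous; I expect this to be the main obstacle, though it is quite manageable. For upper hemicontinuity I would use the sequential characterization: if $U_k \to U$ and $(Q_k, w_k) \in C(U_k)$ with $(Q_k, w_k) \to (Q, w)$, then since multiplication of matrices, translation, and the inclusion relation are all continuous with respect to the Hausdorff metric, $Q_k(U_k L) + w_k \to Q(UL) + w$ and the limit still satisfies $Q(UL)+w \subset K$ (the inclusion $A \subset K$ is a closed condition in the Hausdorff metric), so $(Q, w) \in C(U)$. For lower hemicontinuity the natural approach is: given $U_k \to U$ and $(Q, w) \in C(U)$, I must produce $(Q_k, w_k) \in C(U_k)$ with $(Q_k, w_k) \to (Q, w)$. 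The subtlety is that $Q(U_k L) + w$ need not be contained in $K$. To fix this I would shrink slightly: note $Q(UL) + w \subset K$; since $0 \in \intr K$, for any $\lambda < 1$ close to $1$ we have $\lambda(Q(UL)+w) \subset \intr K$ (here I would center appropriately — it is cleanest to first translate so that the barycenter or some interior point is handled, but since $0 \in \intr K$, $\lambda \cdot K \subset \intr K$, and more to the point $\lambda \big(Q(UL)+w\big) \subset \lambda K \subset \intr K$). Wait — $Q(UL)+w$ is only known to lie in $K$, not to be a dilate of $K$; so instead I pick an interior point $p \in \intr\big(Q(UL)+w\big)$ that is also interior to $K$ (possible as $0\in\intr K$ — in fact take $p$ to be any point of $\intr(Q(UL)+w)$, and then $(1-\delta)(Q(UL)+w-p)+p$ lies in the interior of $K$ for small $\delta$). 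Then $Q_k := (1-\delta)Q$ and $w_k := (1-\delta)(w - QU_kL\text{'s offset}) \ldots$ — concretely, since $U_k L \to UL$ in Hausdorff metric, $(1-\delta)Q(U_k L) + w'_k \to (1-\delta)Q(UL)+w'$ for the appropriate translate $w'$, and for $k$ large this is within $\delta$-neighborhoods of a set strictly inside $K$, hence contained in $K$; letting $\delta = \delta_k \to 0$ slowly gives the required sequence converging to $(Q,w)$.

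Once continuity of $C$ is established, the maximum theorem yields that $C^*(U) = \argmax\{\det Q : (Q, w) \in C(U)\}$ is upper hemicontinuous with nonempty compact values. By Proposition \ref{pos_john_ex}, $C^*(U)$ is a single point $\{(P^*(U), z^*(U))\}$ for every $U$ (uniqueness of the positive John image, together with uniqueness of the representing pair from Corollary \ref{pos_aff_sym}). A single-valued upper hemicontinuous correspondence is exactly a continuous function, so $P^*$ and $z^*$ are continuous on $O_n$, completing the proof. I would remark that the same argument, allowing $K$ and $L$ to vary as well, gives the joint continuity asserted in Proposition \ref{pos_john_contin}, since all the relevant operations remain continuous in $(K, L)$ with respect to the Hausdorff metric.
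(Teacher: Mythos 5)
Your proposal is correct and follows essentially the same route as the paper: the paper also applies the Berge maximum theorem to the feasible-set correspondence $C$, verifies upper hemicontinuity via the sequential characterization and lower hemicontinuity via a small dilation into the interior of $K$, and then concludes from single-valuedness (Proposition \ref{pos_john_ex}) that the upper hemicontinuous correspondence of maximizers is a continuous function. The only cosmetic difference is that the paper takes the parameter space to be $\mathcal K^n \times \mathcal K^n$ and deduces the $O_n$-version by composing with $U \mapsto UL$, exactly as you note in your closing remark.
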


We will derive this from a more general proposition which is a simple application of the maximum theorem. (We will have use for the more general version later.) For comparison, we also give a direct proof which avoids the use of set-valued analysis.

\begin{proposition}\label{pos_john_contin} The function $(P^*, z^*): \mathcal K^n \times \mathcal K^n \to \mathcal P^n \times \mathbb R^n$ defined such that $P^*(K, L) L + z^*(K, L)$ is the positive John image of $L$ contained in $K$ is continuous with respect to the Hausdorff metric.
\end{proposition}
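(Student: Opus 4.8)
The plan is to recognise $(P^*,z^*)$ as the (unique) maximiser of a parametrised optimisation problem and to invoke the Berge maximum theorem. Since continuity is a local property, I would fix a point $(K_0,L_0)\in\mathcal K^n\times\mathcal K^n$, choose $r,R>0$ and a Hausdorff neighbourhood $\mathcal N\ni(K_0,L_0)$ with $rB^n_2\subset L,K\subset RB^n_2$ for all $(K,L)\in\mathcal N$, and set
\[
\mathcal X=\{(P,z):P\ \text{positive semidefinite},\ \|P\|\le 2R/r,\ |z|\le R\},
\]
a compact subset of $\Sym^{n\times n}\times\mathbb R^n$. On $\mathcal N$ I take as feasible correspondence $C(K,L)=\{(P,z)\in\mathcal X:PL+z\subset K\}$ and as objective the continuous function $f((P,z),(K,L))=\det P$. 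Because $\mathcal P^n$ sits inside the positive-semidefinite cone and any positive affine transformation $(P,z)$ with $PL+z\subset K$, $(K,L)\in\mathcal N$, satisfies $\|P\|\le 2R/r$ and $|z|\le R$ (the same estimate as in Proposition~\ref{pos_john_ex}), the family $\mathcal A_{K,L}$ of Proposition~\ref{pos_john_ex} is contained in $C(K,L)$, while $C(K,L)\setminus\mathcal A_{K,L}$ consists only of pairs with $\det P=0$; since $\tfrac rR L\subset rB^n_2\subset K$ gives $\max\{\det P:(P,z)\in C(K,L)\}\ge(r/R)^n>0$, the maximiser over $C(K,L)$ lies in $\mathcal A_{K,L}$ and, by Proposition~\ref{pos_john_ex}, is unique; it is $(P^*(K,L),z^*(K,L))$. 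Thus, once $C$ is shown continuous, the maximum theorem gives that the maximiser correspondence is upper hemicontinuous, and an upper hemicontinuous single-valued correspondence is exactly a continuous function; this is continuity of $(P^*,z^*)$ at $(K_0,L_0)$.

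It then remains to verify that $C:\mathcal N\rightrightarrows\mathcal X$ is both upper and lower hemicontinuous. Upper hemicontinuity should be routine: if $(K_j,L_j)\to(K,L)$ in $\mathcal N$ and $(P_j,z_j)\to(P,z)$ in $\mathcal X$ with $P_jL_j+z_j\subset K_j$, then $P_jL_j+z_j\to PL+z$ and $K_j\to K$ in the Hausdorff metric (multiplication by a matrix, translation, and Hausdorff limits being mutually compatible), and an inclusion of convex bodies survives passage to a Hausdorff limit, so $PL+z\subset K$.

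The hard part will be lower hemicontinuity, which can fail precisely when $PL+z$ touches $\partial K$: then an arbitrarily small shrinking of $K$ destroys feasibility, and $K_j$ may be such a shrinking. My remedy is to contract $PL+z$ towards the origin before perturbing. Given $(K_j,L_j)\to(K,L)$, a point $(P,z)\in C(K,L)$, and $\varepsilon>0$: since $0\in\intr K$ and $PL+z\subset K$, the body $t(PL+z)=(tP)L+(tz)$ lies in $\intr K$ for every $t\in[0,1)$ and $(tP,tz)\to(P,z)$ as $t\uparrow1$, so fix such a $t$ with $(P',z'):=(tP,tz)$ within $\varepsilon$ of $(P,z)$ (still in $\mathcal X$). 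Then $P'L+z'$ is a compact subset of $\intr K$, hence $P'L+z'+\delta B^n_2\subset K$ for some $\delta>0$; and for $j$ large, $P'L_j\subset P'L+\tfrac\delta2B^n_2$ (as $L_j\to L$) and $K\subset K_j+\tfrac\delta2B^n_2$, so
\[
P'L_j+z'+\tfrac\delta2B^n_2\ \subset\ P'L+z'+\delta B^n_2\ \subset\ K\ \subset\ K_j+\tfrac\delta2B^n_2,
\]
and cancelling the common Minkowski summand $\tfrac\delta2B^n_2$ (legitimate since $K_j$ is convex) yields $P'L_j+z'\subset K_j$. So $C(K_j,L_j)$ contains points arbitrarily close to $(P,z)$ for all large $j$, and an interleaving selection of such points verifies the sequential criterion for lower hemicontinuity. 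Everything except this shrink-and-perturb move is bookkeeping with Hausdorff-continuous operations, so that move is where the real content lies. (One can also bypass set-valued language entirely: along any $(K_j,L_j)\to(K_0,L_0)$, extract a convergent subsequence of $(P^*(K_j,L_j),z^*(K_j,L_j))$ in $\mathcal X$; the upper-hemicontinuity argument shows its limit is feasible for $(K_0,L_0)$, while the shrink argument shows $\det P^*(K_j,L_j)\ge(1-o(1))^n\det P^*(K_0,L_0)$, so by the uniqueness in Proposition~\ref{pos_john_ex} the limit must be $(P^*(K_0,L_0),z^*(K_0,L_0))$; as every subsequence has a further subsequence with the same limit, the whole sequence converges.)
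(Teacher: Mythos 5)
Your proposal is correct and follows essentially the same route as the paper's first proof: apply the Berge maximum theorem to the feasible correspondence $C(K,L)=\{(P,z):PL+z\subset K\}$, verify upper hemicontinuity by passing inclusions to Hausdorff limits, verify lower hemicontinuity by first shrinking a feasible $(P,z)$ into the interior of $K$ (the paper contracts by a factor $1-\epsilon$ and uses multiplicative neighbourhoods where you use a $\delta$-collar and Minkowski cancellation, a cosmetic difference), and conclude from the uniqueness in Proposition~\ref{pos_john_ex} that the upper hemicontinuous maximiser correspondence is single-valued, hence continuous. Your extra bookkeeping (compactifying the ambient set $\mathcal X$ and checking the maximiser has $\det P>0$) is sound and only makes explicit details the paper leaves implicit.
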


Proposition \ref{pos_john_contin_on} follows immediately from Proposition \ref{pos_john_contin} upon noticing that the function $f: O_n \to \mathcal K^n$ defined by $f(U) = UL$ is continuous.

\begin{proof}[First proof of Proposition \ref{pos_john_contin}.]
Let $X = \overline{\mathcal P}_n \times \mathbb R^n$, where $\overline{\mathcal P}_n$ is the set of positive semidefinite matrices (a closed convex cone in the space of $n \times n$ matrices), $\Theta = \mathcal K^n \times \mathcal K^n$, $f(P, z, K, L) = \det(P)$, and $C(K, L) = \{(P, z) \in X: PL + z \subset K\}$. It is not hard to check that $C$ is both upper and lower hemicontinuous. For upper hemicontinuity, recall that in the course of the proof of Proposition \ref{pos_john_ex} we saw that $C(K, L)$ is compact, so we can use the sequential characterization: if $(K_m, L_m) \to (K, L)$, $(P_m, z_m) \to (P, z)$, and $P_m L_m + z_m \subset K_m$ then clearly $P L + z \subset K$ because $K$ is closed. For lower hemicontinuity, we use the definition: if $(P, z) \in V \cap C(K, L)$ for some open set $V$, then $PL + z \subset K$ by definition of $C(K, L)$, and $((1 - \epsilon)^2 P, (1 - \epsilon) z) \in V$ for some $\epsilon > 0$ because $V$ is open. Now let $U$ be a neighborhood of $(K, L)$ such that $L' \subset (1 - \epsilon)^{-1} L$ and $(1 - \epsilon) K \subset K'$ for all $(K', L') \in U$, so that
\begin{equation}
(1 - \epsilon)^2 P L' + (1 - \epsilon) z \subset (1 - \epsilon)P L + (1 - \epsilon) z \subset (1 - \epsilon) K \subset K',
\end{equation}
giving $((1 - \epsilon)^2P, (1 - \epsilon)z) \in V \cap C(K', L')$ for all $(K', L') \in U$; in particular, $V \cap C(K', L')$ is nonempty.

Hence, by the maximum theorem, $C^*(K, L)$ is upper hemicontinuous; but by Proposition \ref{pos_john_ex}, $C^*(K, L)$ is single-valued, and a single-valued upper hemicontinuous correspondence is continuous, hence $P^*, z^*$ are continuous functions of the pair $K, L$, as desired.
\end{proof}

\begin{proof}[Second proof of Proposition \ref{pos_john_contin}.]
Write $f^*(K, L) = \det(P^*(K, L))$. We first claim that $f^*: \mathcal K^n \times \mathcal K^n \to \mathbb R^+$ is continuous. Indeed, given $K, L$ and $\epsilon > 0$, let $U$ be a neighborhood of $(K, L)$ such that for all $(K', L') \in U$ one has $(1 + \epsilon)^{-1} K \subset K' \subset (1 + \epsilon) K$ and similarly for $L$.
Then if $PL + z \subset K$ then $(1 + \epsilon)^{-1} P L' + z \subset (1+\epsilon) K'$ and similarly with $K, L$ and $K', L'$ interchanged, implying $f^*(K', L') \in ((1 + \epsilon)^{-2n} f^*(K, L), (1 + \epsilon)^{2n} f^*(K, L))$, so $f^*$ is continuous.

Next, we claim that the graph of $(P^*, z^*): \mathcal K^n \times \mathcal K^n \to \overline{\mathcal P}_n \times \mathbb R^n$ is closed: indeed, if $(K_m, L_m) \to (K, L)$ and $(P^*(K_m, L_m), z^*(K_m, L_m)) \to (P, z)$, we must have
\begin{equation}
f(P) = \lim_{m \to \infty} f(P^*(K_m, L_m)) = \lim_{m \to \infty} f^*(K_m, L_m) = f^*(K, L).
\end{equation}
But we have $P^*(K_m, L_m) L_m + z^*(K_m, L_m) \subset K_m$ and hence, taking the limit, $P L + z \subset K$. Thus $P L + z$ must be the positive image of $L$ of maximum volume in $K$, i.e., $(P, z) = (P^*(K, L), z^*(K, L))$, showing that the graph of $(P^*, z^*)$ is closed.

Finally, restricting to the neighborhood $U$ defined above, the range of $(P^*, z^*)$ can be taken to be compact (as before, letting $r, R$ such that $r B^n_2 \subset L$, $K \subset R B^n_2$, we have that
\begin{equation}\im\left.(P^*, z^*)\right|_U \subset \{P \in \overline{\mathcal P}_n: \|P\| \le \frac{R}{r} (1 + \epsilon)\} \times (1 + \epsilon) K,
\end{equation}
which is compact); hence, by the closed graph theorem of point-set topology, $(P^*, z^*)$ is continuous, as desired.
\end{proof}

In particular, one obtains that the John and L\"owner ellipsoids of a convex body $K$ are continuous in $K$, which was pointed out in \cite[p. 966]{ABT}.

The proof of Proposition \ref{pos_john_contin} makes no use of the fact that we are working with positive matrices specifically, and so a similar result can be stated for general affine images: the position of maximal volume of $L$ in $K$ defines an upper hemicontinuous correspondence $\mathcal K^n \times \mathcal K^n \rightrightarrows \mathcal P^n \times \mathbb R^n$. In general, however, this correspondence will not be single-valued.

\subsection{Differentiability of the positive John position}
Our next result shows that under certain conditions on $K, L$, one can obtain stronger regularity of the function $(P^*, z^*)$:

\begin{theorem}\label{pos_john_sm} Let $K \in \mathcal K^n$ be a $C^k_+$ body ($k \ge 2$) and $L \in \mathcal K^n$ be a polytope. Consider the positive image map $(P^*, z^*): O_n \to \mathcal P^n \times \mathbb R^n$ determined by $K, L$. Let $U_0 \in O_n$ such that the contact pairs $(x_i, y_i)$ and constants $c_i$ satisfying the conclusions of Theorem \ref{pos_john_contact} (Equations \eqref{iso_trans}, \eqref{iso_dil}) are uniquely determined. Then $(P^*, z^*)$ is $C^{k - 1}$ in a neighborhood of $U_0$.
\end{theorem}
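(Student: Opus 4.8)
The strategy is to realize $(P^*, z^*)$ locally as the solution of a smooth system of equations to which the implicit function theorem applies. Near $U_0$, the relevant constraints are those active at $U_0$, i.e. the contact pairs $(x_i, y_i)$. Since $L$ is a polytope, its boundary decomposes into finitely many facets, and a contact point $x_i$ of $U_0 L$ lies in the relative interior of a unique facet $F_i$ of $U_0 L$ (after possibly perturbing we may assume no contact point is on a lower-dimensional face — this genericity is part of what ``uniquely determined'' buys us, together with the fact that the normal cone at a vertex is full-dimensional so contacts at vertices would give redundant constraints that could be removed). Correspondingly, $y_i$ is the (fixed) outer unit normal direction to that facet, rescaled; as $U$ varies near $U_0$, the facet $UF_i$ of $UL$ has outer normal $U u_i$ where $u_i$ is the normal to the corresponding facet of $L$. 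On the $K$ side, since $K$ is $C^k_+$, its boundary is a $C^k$ hypersurface and the Gauss map $\nu_K : \partial K \to S^{n-1}$ is a $C^{k-1}$ diffeomorphism with $C^{k-1}$ inverse $\nabla h_K|_{S^{n-1}}$.

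**Setting up the equations.** For a positive affine image $PUL + z$ to touch $\partial K$ along the facet $UF_i$ with the matching tangency, we need: the affine hull of $P(UF_i) + z$ is tangent to $\partial K$, and the touching point is $P x_i' + z$ where $x_i'$ is the vertex-data of $F_i$ transported by $U$ — more precisely, the condition is that the support function of $PUL+z$ in the direction $(P^{-1})^T U u_i$ equals $h_K$ in that direction and the boundary point of $K$ realizing it is $P U v + z$ for the appropriate vertex $v$ of $F_i$. Concretely I would package the unknowns as $(P, z) \in \mathcal P^n \times \R^n$ together with the multipliers $(c_1, \dots, c_m) \in \R^m_{>0}$, and write down the map
\begin{equation}
\Phi(P, z, c; U) = \left( \sum_i c_i \bigl(\xi_i(P,z,U) \otimes \eta_i(P,U)\bigr)_{sym} - I_n,\ \sum_i c_i \eta_i(P,U),\ \bigl(\langle \xi_i, \eta_i\rangle - 1\bigr)_{i=1}^m \right),
\end{equation}
where $\eta_i(P,U) = (P^{-1})^T U u_i$ is (a scalar multiple of) the contact normal and $\xi_i(P,z,U) = \nabla h_K(\eta_i / |\eta_i|) $ — or rather, $\xi_i$ should be read off from the requirement that $P^{-1}(\xi_i - z)$ is the designated vertex of the facet $F_i$; the cleanest formulation is to use that $\xi_i = P U v_i + z$ for the vertex $v_i$, and the genuine constraint is that this point lies on $\partial K$ with $K$-normal $\eta_i/|\eta_i|$, i.e. $\xi_i = \nabla h_K(\eta_i/|\eta_i|)$. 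One checks $\Phi$ is $C^{k-1}$ in all variables jointly (the only non-smooth ingredient, $\nabla h_K$, is $C^{k-1}$ since $K$ is $C^k_+$; everything else is rational or polynomial in the entries, with $P$ invertible). At $U = U_0$ we have $\Phi(I_n \text{ or the actual } P^*(U_0), z^*(U_0), c; U_0) = 0$ by Theorem \ref{pos_john_contact}.

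**Invertibility of the differential.** The heart of the matter is showing $D_{(P,z,c)}\Phi$ is invertible at the base point. The number of equations matches the number of unknowns: $\dim \Sym^{n\times n} + n + m$ equations against $\dim \mathcal P^n + n + m$ unknowns. Invertibility should follow from the uniqueness hypothesis: if the differential had nontrivial kernel, one could produce a one-parameter family of solutions to $\Phi(\cdot; U_0) = 0$, contradicting uniqueness — but one must be careful, since uniqueness of the $(x_i, y_i, c_i)$ does not literally preclude a curve in $(P,z,c)$-space unless one also knows $(P^*, z^*)(U_0)$ is isolated among critical configurations. Here I would argue more directly: a kernel element $(\dot P, \dot z, \dot c)$ gives, after differentiating the dilation equation, a relation $\sum_i \dot c_i (\xi_i \otimes \eta_i)_{sym} + \sum_i c_i (\cdots)_{sym} = 0$; pairing against $\dot P$ (viewed in $\Sym^{n\times n}$) and using the concavity estimate \eqref{tr_det_am_gm} from the proof of Theorem \ref{pos_john_contact} — which shows the feasible set has a genuinely strictly-curved boundary at the optimum, i.e. the second-order conditions for the maximization of $\det$ are strict — forces $\dot P = 0$, whence $\dot z = 0$ and $\dot c = 0$ follow from the remaining equations (using that the $((x_i\otimes y_i)_{sym}, y_i)$ span, which is exactly the content of $(I_n, 0)$ being in the interior of their positive cone plus the uniqueness). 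This strict-second-order-condition argument, i.e. upgrading the log-concavity of $\det$ on $\mathcal P^n$ used in Proposition \ref{pos_john_ex} to the statement that the optimization is nondegenerate, is the step I expect to be the main obstacle and will require the most care.

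**Conclusion.** Granting invertibility, the implicit function theorem yields $C^{k-1}$ functions $U \mapsto (P(U), z(U), c(U))$ near $U_0$ with $\Phi(P(U), z(U), c(U); U) = 0$ and agreeing with the base point at $U_0$. It remains to see that $P(U) L + z(U)$ — wait, $P(U)(U L) + z(U)$ — is genuinely the positive John image of $UL$ in $K$, not merely a critical point: by continuity of $(P^*, z^*)$ (Proposition \ref{pos_john_contin_on}) and of $U \mapsto (P(U), z(U))$, both agree at $U_0$ and stay close, and since the solution of the optimization is unique (Proposition \ref{pos_john_ex}) while a small perturbation of the optimal configuration remains feasible and near-optimal, one checks that for $U$ near $U_0$ the only critical configuration near the base point is the maximizer — so $(P(U), z(U)) = (P^*(U), z^*(U))$. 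Hence $(P^*, z^*)$ is $C^{k-1}$ near $U_0$, as claimed.
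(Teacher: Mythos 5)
Your overall strategy---realize $(P^*,z^*)$ locally as the zero set of the first-order optimality system and apply the implicit function theorem, with the uniqueness hypothesis supplying the nondegeneracy---is the same as the paper's. But there are two genuine problems. First, your contact geometry is backwards. Since $K$ is $C^k_+$ it is in particular strictly convex, so a point of $\partial(PUL+z)\cap\partial K$ lying in the relative interior of a face of positive dimension would force that entire face into the supporting hyperplane of $K$ at that point, hence into the singleton $K\cap H$; thus the contact points are necessarily \emph{vertices} of the polytope, and the normals $y_i$ are read off from the Gauss map of $K$ at those points (namely $y_i=\nabla g_K(x_i)$), not from facet normals of $L$. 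Your system $\Phi$, built from facet normals $Uu_i$ transported by $(P^{-1})^T$, describes the dual configuration ($K$ a polytope, $L$ smooth --- the case the paper relegates to a remark), and so does not parametrize the actual contact structure here. A related slip: your stationarity equation $\sum_i c_i(\xi_i\otimes\eta_i)_{sym}=I_n$ with $\xi_i$ a contact point of the \emph{image} $PUL+z$ is not the first-order condition for $(P,z)$ to maximize $\log\det$ over positive images of $UL$; that condition reads $P^{-1}=\sum_i\mu_i\nabla_P g_i$ (equivalently one must conjugate by $P^{1/2}$ as in Proposition \ref{prop:pos_john_im}), so the system you feed to the implicit function theorem is not the one whose solution set is the graph of $(P^*,z^*)$.

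Second, and more seriously, the invertibility of the differential---which you correctly identify as the heart of the matter---is not actually proved; you flag it as ``the main obstacle'' and sketch an argument via the AM--GM inequality \eqref{tr_det_am_gm}, which is a first-order separation statement and does not deliver the second-order nondegeneracy you need. The paper closes this gap by choosing the right formulation: the unknowns are $(P,z,\mu)$ and the system is $\nabla_{(P,z,\mu)}\mathcal L=0$ for the Lagrangian $\mathcal L(P,z,\mu)=\log\det P-\sum_j\mu_j\bigl(g_K(PUx_{i_j}+z)-1\bigr)$, restricted to the constraints binding at $U_0$. The Jacobian is then the bordered Hessian $\left(\begin{smallmatrix} D^2_{P,z}\mathcal L & D_{P,z}G\\ (D_{P,z}G)^T & 0\end{smallmatrix}\right)$; the block $D^2_{P,z}\mathcal L$ is negative definite because $\log\det$ is strictly concave on $\mathcal P^n$ and each $g_K(PUx_i+z)$ is strictly convex in $(P,z)$ (this is exactly where $C^k_+$ with $k\ge 2$ is used), while $D_{P,z}G$ has full column rank because its columns are the vectors $((x_j\otimes y_j)_{sym},y_j)$, whose linear independence is precisely the content of the uniqueness hypothesis on the $c_i$. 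Invertibility of such a bordered matrix is then a two-line exercise. Concavity also settles your final worry for free: every stationary point of $\mathcal L$ is automatically the global maximizer, so the implicit-function branch is the positive John image without any separate identification argument.
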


The idea of the proof is that under these conditions, $(P^*(U), z^*(U))$ is a solution to a finite-dimensional convex optimization problem, and such solutions can be shown to vary regularly with the parameter, under certain conditions on the constraints, by translating the optimization problem into an implicit function problem. The main tool for accomplishing this translation is the method of Lagrange multipliers, which, when inequalities are involved, is known as the Karush-Kuhn-Tucker theorem \cite[Corollaries 5.2.2, 5.2.3]{C}:

\begin{theorem}[Karush-Kuhn-Tucker]\label{kkt_thm} Let $\Omega \subset \mathbb R^N$ be a domain, $f, g_1,\ldots,g_m: \Omega \to \mathbb R$ twice-differentiable functions, and consider the problem of maximizing $f$ on $\Omega$ subject to the constraints $g_1 \le 0, \ldots, g_m \le 0$. Let $\Gamma = \bigcap_{i = 1}^m \{x \in \Omega: g_i(x) \le 0\}$ denote the feasible set.

If $x^*$ is a local optimum of $f$ on $\Gamma$, then there exists $\mu^* \in (\mathbb R_+)^m$ such that $(x^*, \mu^*)$ is a stationary point of the Lagrangian $\mathcal (x, \mu) = f(x) - \sum \mu_i g_i(x)$, i.e., $\left.\nabla_x \mathcal L(x, \mu)\right|_{(x^*, \mu^*)} = 0$.
\end{theorem}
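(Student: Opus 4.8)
The statement is the classical Karush--Kuhn--Tucker first-order necessary condition of nonlinear programming, used here only as an off-the-shelf tool, so I would merely indicate the standard argument. One point deserves mention at the outset: as literally worded the conclusion requires a constraint qualification. For instance, maximizing $f(x)=x$ on $\Omega=\mathbb R$ subject to $g_1(x)=x^2\le 0$ has feasible set $\{0\}$ and optimum $x^*=0$, yet $\nabla f(0)=1$ cannot be written as $\mu_1\nabla g_1(0)=0$. The cited reference supplies such a hypothesis --- in the convex setting in which those corollaries are stated, Slater's condition (existence of a strictly feasible point) suffices, and one could equally invoke the Mangasarian--Fromovitz condition --- and in every application in the present paper the feasible set has nonempty interior, so Slater's condition holds and the caveat is harmless.

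The plan is as follows. Write $I(x^*)=\{i:g_i(x^*)=0\}$ for the active index set and set $\mu_i^*=0$ for $i\notin I(x^*)$; by continuity of $g_i$ these constraints are inactive on a neighborhood of $x^*$, so they contribute nothing, and it remains to produce nonnegative $\mu_i^*$ for $i\in I(x^*)$ with $\nabla f(x^*)=\sum_{i\in I(x^*)}\mu_i^*\,\nabla g_i(x^*)$ --- this gives $\left.\nabla_x\mathcal L\right|_{(x^*,\mu^*)}=0$, and since $\mu_i^*=0$ off the active set one also gets complementary slackness for free. Equivalently, the task is to show $\nabla f(x^*)$ lies in the closed convex cone $\Sigma$ generated by $\{\nabla g_i(x^*):i\in I(x^*)\}$.

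Suppose for contradiction that $\nabla f(x^*)\notin\Sigma$. By the separating-hyperplane theorem (Farkas' lemma) there is a vector $d\in\mathbb R^N$ with $\langle\nabla f(x^*),d\rangle>0$ and $\langle\nabla g_i(x^*),d\rangle\le 0$ for all $i\in I(x^*)$. Now invoke the constraint qualification to upgrade $d$ to a strictly feasible ascent direction: picking a Slater point $x_0$, convexity gives $\langle\nabla g_i(x^*),x_0-x^*\rangle\le g_i(x_0)-g_i(x^*)=g_i(x_0)<0$ for active $i$, so replacing $d$ by $d_\varepsilon=d+\varepsilon(x_0-x^*)$ yields, for $\varepsilon>0$ small, $\langle\nabla g_i(x^*),d_\varepsilon\rangle<0$ for all active $i$ while still $\langle\nabla f(x^*),d_\varepsilon\rangle>0$. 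A first-order Taylor expansion then shows that for small $t>0$ one has $g_i(x^*+td_\varepsilon)<0$ for active $i$ (and also for inactive $i$, by continuity), hence $x^*+td_\varepsilon\in\Gamma$, whereas $f(x^*+td_\varepsilon)=f(x^*)+t\langle\nabla f(x^*),d_\varepsilon\rangle+o(t)>f(x^*)$. This contradicts the local optimality of $x^*$, so $\nabla f(x^*)\in\Sigma$ and the desired multipliers exist.

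The only genuinely substantive step is the construction of the feasible ascent direction, that is, the point at which the constraint qualification enters; the Farkas alternative and the Taylor estimate are routine. An alternative route, which I would fall back on if one prefers to avoid Farkas, is to introduce slack variables $s_i$, convert the problem into maximizing $f(x)$ subject to the equalities $g_i(x)+s_i^2=0$, apply the classical equality-constrained Lagrange multiplier theorem at $(x^*,s^*)$ (itself an implicit-function-theorem argument), and then read off from the relations $\lambda_i s_i^*=0$ and a second-order inspection that the active multipliers are nonnegative --- but the sign bookkeeping is cleaner in the separation argument above, so that is the version I would present.
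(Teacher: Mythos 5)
The paper does not actually prove this statement at all: it is quoted as an off-the-shelf tool, with a citation to Carter's \emph{Foundations of Mathematical Economics} (Corollaries 5.2.2--5.2.3), and is then used as a black box in the proofs of Theorem \ref{pos_john_sm} and Theorem \ref{saddle_john_contact}. So there is no in-paper argument to compare yours against; what you have written is a correct sketch of the standard proof in the convex/Slater setting --- Farkas' lemma (the finitely generated cone of active gradients is closed), perturbation of the separating direction by a strictly feasible direction, and a first-order Taylor estimate producing a feasible ascent direction that contradicts local optimality. Your preliminary observation is also well taken: as transcribed, the theorem omits a constraint qualification and is literally false (your example of maximizing $x$ subject to $x^2\le 0$ is decisive); the corollaries cited from Carter carry such a hypothesis. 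The omission is harmless for the paper, since in the actual applications the objective $\log\det P$ is concave, the constraints $g_i(P,z)=g_K(PUx_i+z)-1$ are convex, and a strictly feasible point exists (take $P=\epsilon I$, $z=0$ with $\epsilon$ small), so Slater's condition holds. Two small caveats on your write-up: the inference ``the feasible set has nonempty interior, so Slater's condition holds'' is a slightly loose shortcut (a constraint could vanish identically on an interior ball), though strict feasibility is immediate in the applications as just indicated; and your upgrade $\langle\nabla g_i(x^*),x_0-x^*\rangle\le g_i(x_0)<0$ uses convexity of the $g_i$, which the statement as written does not assume --- fine for the corollaries being invoked and for the paper's use, but for the general nonconvex KKT theorem you would need to argue via Mangasarian--Fromovitz instead, as you note.
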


\begin{proof}[Proof of Theorem \ref{pos_john_sm}] First, fix $U \in O_n$. Let $g_K$ be the gauge function of $K$, which is $C^k_+$ by assumption, and let $x_1, \ldots, x_m$ be the vertices of $L$. Set $\Omega = \mathcal P^n \times \mathbb R^n$, define $f: \Omega \to \mathbb R$ by $f(P, z) = \log \det(P)$, and define $g_1, \ldots, g_m: \Omega \to \mathbb R$ by $g_i(P, z) = g_K(P U x_i + z) - 1$. Then $P(UL) + z \subset K$ if and only if $(P, z) \in \Gamma = \{(P, z): g_1(P, z), \ldots, g_m(P, z) \le 0\}$. By assumption, each $g_i$ is strictly convex and $k$ times differentiable. In addition, $P \mapsto \log \det(P)$ is strictly concave on $\mathcal P^n$ by an inequality of Minkowski \cite[Lemma B.4.1]{AAGM}.

We wish to characterize the solution to the optimization problem $\max_{(P, z) \in \Gamma} f(P)$. By the Karush-Kuhn-Tucker theorem, any local optimum $(P_0, z_0)$ of $f$ on the feasible set is a stationary point of the Lagrangian $\mathcal L(P, z, \mu) = f(P) - \sum \mu_i g_i(P, z)$, i.e., $\nabla_{P, z} f(P_0, z_0) = \sum_i \mu_i \nabla_{P, z} g_i(P_0, z_0)$ and $\mu_i g_i(P_0, z_0) = 0$ for all $i$. For every $(P, z) \in \Gamma$, we have
\begin{equation*}
0 \ge \mu_i g_i(P, z) \ge \mu_i \nabla_{P, z} g_i(P_0, z_0) \cdot (P - P_0, z - z_0) + \mu_i g_i(P_0, z_0)
\end{equation*}
 because $\mu_i g_i$ is convex, implying that
\begin{align*}
f(P, z) &\le f(P_0, z_0) + \nabla_{P, z} f(P_0, z_0) \cdot (P - P_0, z - z_0) \\&= f(P_0, z_0) + \sum_i \mu_i \nabla_{P, z} g_i(P_0, z_0)\cdot (P - P_0, z - z_0) \le f(P_0, z_0)
\end{align*}
for any $(P, z) \in \Gamma$, where the first inequality uses the concavity of $f$. Hence any local optimum of $f$ on $\Gamma$ is a global maximum, i.e., corresponds to a positive John image of $UL$ in $K$, and as we have already seen, this image is unique. Thus we see that the positive John image of $UL$ is characterized by the equation $\left.\nabla\right|_{(P, z, \mu)} \mathcal L(P_0, z_0, \mu_0) = 0$.

We now allow $U$ to vary in $O_n$, and consider $\mathcal L$ to be a function of $U$ as well. By what we have seen so far, the graph of the function $(P^*, z^*): O_n \to \Omega$ can be described as the projection onto the first three coordinates of the set $Z = \{(U, P, z, \mu): \left.\nabla\right|_{(P, z, \mu)} \mathcal L(U, P_0, z_0, \mu_0) = 0\}$. Hence we can study the regularity of $(P^*, z^*)$ by means of the implicit function theorem.

Let $U_0$ be as in the statement of the theorem, abbreviate $(P_0, z_0) = (P^*(U_0), z^*(U_0))$ and let $x_{i_1}, \ldots, x_{i_k}$ be the vertices of $L$ whose image under $(P_0, z_0)$ lie on the boundary of $K$. The $g_{i_1}, \ldots, g_{i_k}$ corresponding to $x_{i_1}, \ldots, x_{i_k}$ are precisely the binding constraints of the optimization problem at $U_0$, i.e., we have $g_{i_j}(P_0, z_0) = 0$ for all $j$. Since $(P^*, z^*)$ is continuous in $U$, we know that in a neighborhood of $U_0$, $x_{i_1}, \ldots, x_{i_k}$ are the only vertices of $L$ whose images possibly lie on the boundary of $K$. Restricting to this neighborhood, we may replace the original optimization problem by the problem defined using only the constraints $g_{i_1}, \ldots, g_{i_k}$. Let $\mu^*$ be the Lagrange multiplier for the new problem at $U = U_0$.

The crucial point is that under our assumption on $U_0$, the constraints $g_{i_1}, \ldots, g_{i_k}$ are regular at $U_0$, i.e., $\nabla_{P, z} g_{i_1}, \ldots, \nabla_{P, z} g_{i_k}$ are linearly independent at $(P_0, z_0)$. Indeed, we compute $\nabla_{P, z} g_{i_j}(P_0 U_0 x_{i_j} + z_0) = \left.\nabla_u g_K(u)\right|_{u = P_0 U_0 x_{i_j} + z_0} \cdot A_{x_{i_j}}$, where $A_x$ denotes the linear map $(P, u) \mapsto Px + z$, and for $u \in \partial K$, $v = \nabla_u g_K(u)$ is precisely the contact pair of $u$, namely, the unique vector $v \in \partial K^\circ$ such that $\langle v, u\rangle = 1$ (see e.g.~\cite[1.39]{S}). Under the identification of $\Sym^{n\times n} \times \mathbb R^n$ with its dual via the respective Euclidean structures, we obtain that $\nabla_{P, z} g_{i_j}(P_0 U_0 x_{i_j} + z_0) = ((u_j \otimes v_j)_{sym}, v_j)$, where $(u_j, v_j)$ is a contact pair of $K$ and $P_0 U_0 L + z_0$ defined by $u_j = P_0 U_0 x_{i_j} + z_0$ and $v_j$ is the unique corresponding point on $\partial K^\circ$. Our assumption on $U_0$ precisely means that the equation $\sum c_j (u_j \otimes v_j)_{sym} = (I, 0)$ has a unique solution, so the $((u_j \otimes v_j)_{sym}, v_j)$ must be linearly independent.

Now let $H: O_n \times \Sym^{n\times n} \times \mathbb R^n \times (\mathbb R^k)_+ \to  \Sym^{n\times n} \mathbb R^n \times \mathbb R^k$ be defined by $H(U, P, z, \mu) = \nabla_{P, z, \mu} \mathcal L(U, P, z, \mu)$. We wish to apply the implicit function theorem at $(U_0, P_0, z_0, \mu_0)$ in order to express the zero set of $H$ as the graph of a function; we thus need to show that the Jacobian $J_{U_0}$ of $H(U_0, \cdot)$ is nonsingular. $J_{u_0}$ is precisely the Hessian of $\mathcal L(U_0, P, z, \mu) = f(U_0, P, z) - \sum_{j = 1}^k \mu_j g_{i_j}(U_0, P, z)$; separating derivatives in $(P, z)$ from derivatives in $\mu$, we write this as the block matrix $J_{U_0} = \left(\begin{smallmatrix} D^2_{P, z} \mathcal L & D_{P, z} G \\ (D_{P, z} G)^T & 0\end{smallmatrix}\right)$, where $G = (g_{i_1}, \ldots, g_{i_k})$. Since $f$ is strictly concave and the $g_i$ are strictly convex, $\mathcal L$ is strictly concave in $(P, z)$ and in particular $D^2_{P, z} \mathcal L$ has full rank; in addition, as the constraints $g_{i_1}, \ldots, g_{i_k}$ are regular, $(D_{P, z} G)^T$ also has full rank. It follows that $J_{U_0}$ has full rank, so by the implicit function theorem, the zero set of $H$ is the graph of a $C^{k - 1}$ function $h: O_n \to \Sym^{n\times n} \times \mathbb R^n \times (\mathbb R^k)_+$ in a neighborhood of $U_0$. As we have seen, the projection to the $(P, z)$ variables of the zero set of $H$ is precisely the positive John image map, so we obtain that $(P^*, z^*): O_n \to \Sym^{n\times n} \times \mathbb R^n$ is $C^{k - 1}$ in a neighborhood of $U_0$, as desired.
\end{proof}

\begin{remark}\mbox{}
\begin{enumerate}
\item As stated, the assumptions of the theorem never apply if $K, L$ are centrally symmetric bodies (in which case the positive John image of $L$ is always obtained at $z = 0$). The reason for this is simply that every contact pair $(x_i, y_i)$ will have a corresponding contact pair $(x_j, y_j) = (-x_i, -y_i)$; as both pairs map to the same matrix $x_i \otimes y_i$, there's always a degree of freedom in choosing the coefficients $c_i$ such that $\sum c_i (x_i \otimes y_i)_{sym} = I_n$. To get around this technicality, one chooses a single representative of each pair of vertices $\{x, -x\}$ of $L$, and requires that the coefficients $c_i$ are unique when restricting to contact pairs involving only those vertices. Since both representatives map to the same constraint in the optimization problem encountered in the proof of the theorem, the proof goes through in this case as well.

\item We sketch an example showing that without the assumption on the uniqueness of the solution to Equations \eqref{iso_trans}, \eqref{iso_dil}, the conclusion of the theorem may fail.

Consider the square $S = B_1^2$ and disk $B = B_2^2$ in the plane. Clearly, for any $U \in O_n$, the positive John image of $US$ inside $B$ is $US$. Let $L = \conv(S, \{\pm v\})$ where $v = (\frac{1}{\sqrt 2}, \frac{1}{\sqrt 2})$, and let $K$ be a centrally symmetric $C^2_+$ body such that $\pm e_1, \pm e_2, \pm v \in \partial K$, $\partial K = \partial B$ in a neighborhood of $\pm e_1, \pm e_2$, and $n_K(v)$ is rotated counterclockwise from $v$ by some nonzero angle $\theta$, which means that small clockwise rotations of $v$ will lie in $K$ but not small counterclockwise rotations. For instance, one can take $K$ to look like the ellipsoid $E = \{(x, y): \frac{3}{4}x^2 + \frac{5}{4} y^2 \le 1\}$ in a neighborhood of $v$. See Figure \ref{kl_fig}.

\begin{figure}[h]
\includegraphics[width=.5\textwidth]{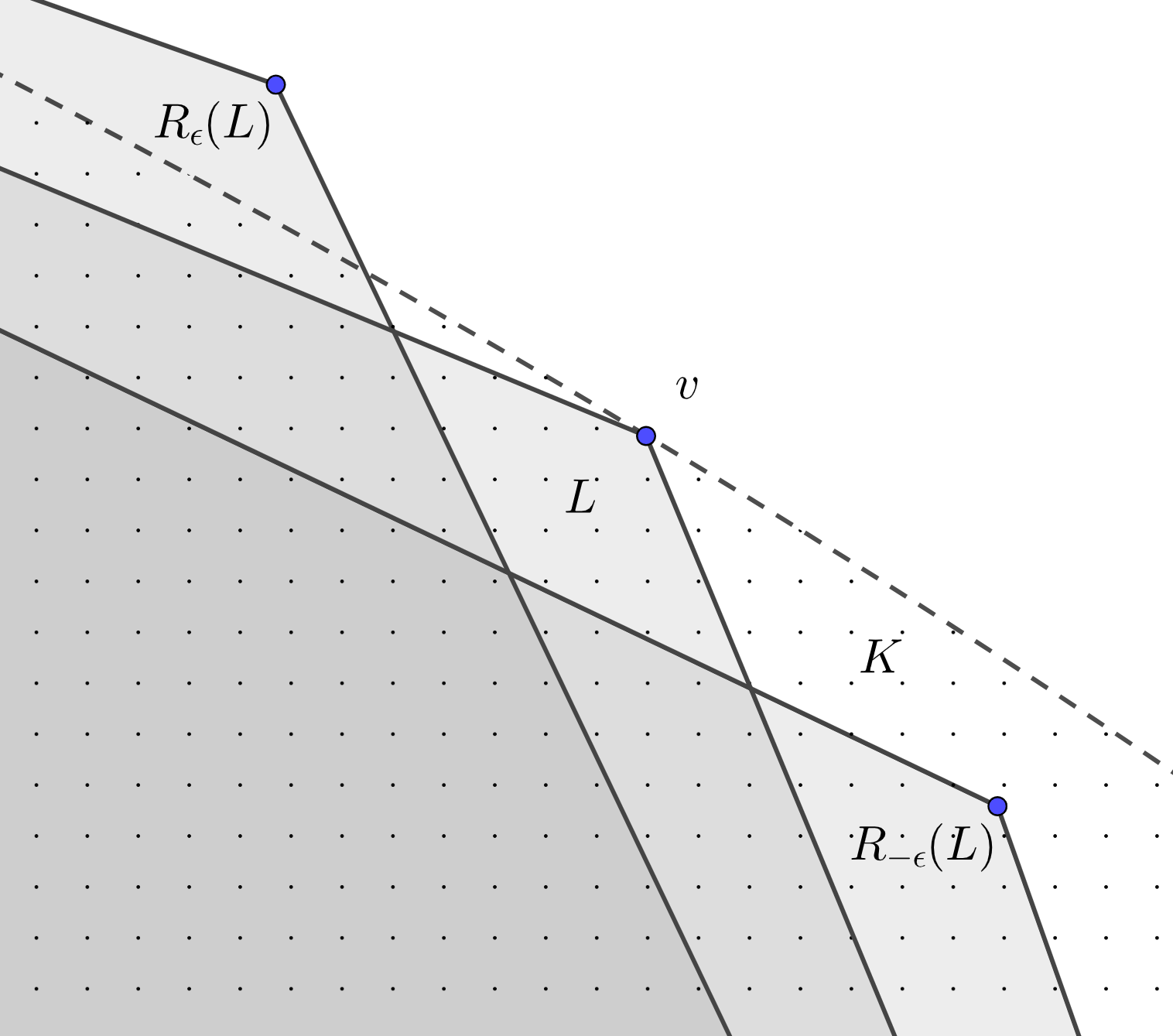}
\caption{$K$, $L$, and small rotations of $L$ in a neighborhood of $v = \left(\frac{\sqrt 2}{2}, \frac{\sqrt 2}{2}\right)$. $L$ and its rotations are shaded; $K$ is dotted. The equation of the dashed line is $\frac{3}{4}x^2 + \frac{5}{4} y^2 = 1$.}
\label{kl_fig}
\end{figure}

We now examine the positive John family of $L$ inside $K$ near the identity. Let $R_\theta = \left(\begin{smallmatrix}\cos\theta & -\sin\theta \\ \sin\theta & \cos\theta\end{smallmatrix}\right)$. Since $R_\theta S$ is in positive John position for all $\theta$ and positive John position is unique, we see that no positive matrix of determinant $1$ satisfies $P R_\theta S \subset B$ except $P = I$; since $K$ looks like $B$ in a neighborhood of $e_1, e_2$ and $L$ contains $S$, we see that for $\theta$ close enough to zero, no positive matrix of determinant $1$ can satisfy $P R_\theta L \subset K$ except possibly $P = I$. For $\theta < 0$, $R_\theta v$ will lie in the interior of $K$, so $R_\theta L$ is in positive John position for such $\theta$, as is the case for $\theta = 0$. However, for $\theta > 0$, $R_\theta v$ will lie outside $K$, and since no positive matrix of determinant $1$ satisfies $P u_1, Pu_2 \in B$ except for the identity, $P^*(R_\theta)$ (which is close to the identity by continuity of positive John position) must have determinant strictly less than $1$. A more careful analysis, which we omit, yields that in fact $\det P^*(R_\theta) = 1 - \Omega(\theta)$ for $\theta > 0$. In particular, $\det P^*$, and hence $P^*$, cannot be differentiable at $I$.

\item A similar theorem may be stated in the case that $K$ is a polytope and $L$ is $C^k_+$. Indeed, the condition $PUL + z \subset K$ can be translated into a finite set of constraints in this case as well: $PUL + z \subset K$ if and only if $h_{PUL + z}(u_i) \le h_K(u_i)$, where $u_i$ are the facet normals of $K$. We can rewrite this:
\begin{align*}h_{PUL + z}(u_i) &= \sup_{x \in PUL + z} \langle x, u_i\rangle = \sup_{y \in UL} \langle y, P u_i\rangle + \langle z, u_i\rangle = h_{UL}(P u_i) + \langle z, u_i\rangle.
\end{align*}
Since $h_{UL}$ is convex, we see that $h_{PUL + z}(u_i)$ is a convex function of $P, z$, and hence $h_{PUL + z}(u_i) \le h_K(u_i)$ is a convex inequality constraint on $(P, z)$. So this case is exactly dual to the case in which $L$ is a polytope, with $h_L$ taking the place of $g_K$.
\item We conjecture that a similar theorem also holds when both $K$ and $L$ are $C^k_+$.
\end{enumerate}
\end{remark}

\section{Saddle-John position}\label{section:saddle_john}
Let $K, L$ be given, and define $(P^*, z^*): O_n \to \mathcal P^n \times \mathbb R^n$ as in the previous section. For $U^* \in \argmax_{O_n} \det P^*(U)$, $L_{max} = P^*(U^*) U^* L + z^*(U^*)$ is a position of maximal volume of $L$ in $K$, regarding which we have finer information than for other positive John images: as shown by \cite{GLMP} (see there for references to earlier work), in this case the contact pairs of $\partial K \cap \partial L_{max}$ support a decomposition of the identity (in the sense of Theorem \ref{gen_john_thm}).
There's another distinguished point on the graph of $\det P^*$, namely the minimum: for given $K, L$, what orthogonal image of $L$ is the ``hardest'' to fit inside $K$, when we allow translations and dilations by positive matrices? Formally, for any $U^* \in \argmin_{O_n} \det P^*(U)$, we consider the image of $L$ given by $L_{sj} = P^*(U^*) U^* L + z^*(U^*)$. Since this image of $L$ is defined by maximizing in one set of variables (the dilation and translation) and minimizing in another (the orthogonal image), we call it the \textit{saddle-John image} of $K$ in $L$; if $L$ is its own saddle-John image, we say that $L$ is in \textit{saddle-John position}.

As with the position of maximal volume, the saddle-John image need not be unique in general: for example, if $L$ is a square and $K$ is a circle, then $(P^*(U), z^*(U)) = (I, 0)$ for all $U \in O_n$ and any rotation of $L$ is simultaneously a position of maximal volume and a saddle-John position.

Saddle points of functions are in particular critical points, which satisfy the same first-order conditions as maxima. Since the optimization result of John (Theorem \ref{john_opt}) which underlies the proof of the properties of the position of maximal volume is obtained by a first-order analysis of the corresponding constrained maximization problem, one might suspect that the saddle-John position of a body $L$ has similar properties as the position of maximal volume, even though it cannot be obtained so simply as the solution to a ``global'' optimization problem defined on all affine images of $L$. Our first goal in this section is to verify this intuition: using Theorem \ref{pos_john_sm} along with an additional tool from mathematical economics, we will show that the saddle-John position, just like the original John position, yields a genuine decomposition of the identity on the contact points of $K$ and $L$. The tool we need is a version of the envelope theorem \cite[Theorem 6.1.1]{C}:

\begin{theorem}\label{envel_thm} Let $X$ and $\Theta$ be smooth manifolds. Consider the parametrized constrained optimization problem
\begin{equation}
\max_{x \in G(\theta)} f(x, \theta)
\end{equation}
with a $C^2$ objective function $f: X \times \Theta \to \mathbb R$ and a parameter-dependent feasible set $G(\theta)$ defined by
\begin{equation}
G(\theta) = \{x \in X: g_j(x, \theta) \le 0, j = 1, \ldots m\},
\end{equation}
where $g_1, \ldots, g_m: X \times \Theta \to \mathbb R$ are $C^2$ functions as well. Define the Lagrangian $L(x, \mu, \theta) = f(x) - \sum_{i = 1}^m \mu_i g_i(x, \theta)$, and the value function $v(\theta) = \sup_{x \in G(\theta)} f(x, \theta)$.

Let $(x_0, \theta_0) \in X \times \Theta$ such that $x_0$ is a strict local maximum of $f(x, \theta_0)$ in $G(\theta_0)$, and suppose that the binding constraints are regular, that is, the vectors $\{\nabla_x g_i(x_0, \theta_0): g_i(x_0, \theta_0) = 0\}$ are linearly independent. Then $v(\theta)$ is $C^1$ in a neighborhood of $\theta_0$, and $\nabla_\theta v(\theta_0) = \nabla_\theta L(x_0, \theta_0, \mu_0)$.
\end{theorem}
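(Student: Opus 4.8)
The plan is to translate the problem, via the Karush--Kuhn--Tucker (KKT) conditions, into a finite-dimensional implicit-function problem, and then read off the envelope formula from a chain-rule computation in which the first-order conditions kill every term involving the derivatives of the optimal solution and of the multipliers. Working in local charts I may assume $X\subset\mathbb R^N$, $\Theta\subset\mathbb R^p$ are open. First I would discard the inactive constraints: with $A=\{i:g_i(x_0,\theta_0)=0\}$, the constraints $i\notin A$ satisfy $g_i<0$ near $(x_0,\theta_0)$ by continuity, so a local maximizer of the problem with constraint set $\{g_i:i\in A\}$ which stays near $x_0$ is automatically a local maximizer of the full problem. Since $x_0$ is a \emph{strict} local maximum, choose a closed ball $\bar B\ni x_0$ on which $f(\cdot,\theta_0)<f(x_0,\theta_0)$ throughout $(G(\theta_0)\cap\bar B)\setminus\{x_0\}$; then by compactness and the maximum theorem recalled above, for $\theta$ near $\theta_0$ the maximizers of $f(\cdot,\theta)$ over the reduced feasible set meeting $\bar B$ lie in $\operatorname{int}\bar B$ close to $x_0$ — hence are local maxima of the full problem — and realize the value $v(\theta)$. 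Regularity of the binding constraints makes the KKT conditions necessary at each such point, with a unique multiplier vector; write $(x_0,\mu_0)$ for the solution/multiplier pair at $\theta_0$, so $\nabla_x L(x_0,\mu_0,\theta_0)=0$, $\mu_{0,i}\ge 0$, $\mu_{0,i}g_i(x_0,\theta_0)=0$.

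The heart of the argument is to upgrade this to a $C^1$ dependence. I would apply the implicit function theorem to the KKT map $\Phi(x,\mu,\theta)=\big(\nabla_x L(x,\mu,\theta),\,(g_i(x,\theta))_{i\in A}\big)$: near $(x_0,\mu_0,\theta_0)$ its zero locus is the graph of a branch $\theta\mapsto(x(\theta),\mu(\theta))$, which (once one checks, via the second-order condition, that it consists of local maxima, and uses that by the maximum theorem the maximizers accumulate only at $x_0$) coincides with the optimal solution and realizes $v$. The Jacobian of $\Phi$ in $(x,\mu)$ at $(x_0,\mu_0,\theta_0)$ is the bordered matrix $\left(\begin{smallmatrix}\nabla^2_{xx}L & -B\\ B^T & 0\end{smallmatrix}\right)$, where $B$ has columns $\nabla_x g_i(x_0,\theta_0)$, $i\in A$: $B$ has full column rank by the regularity hypothesis, and $\nabla^2_{xx}L$ is negative definite on $\ker B^T$ by the second-order sufficiency accompanying a regular strict constrained maximum, so the bordered Hessian is nonsingular and the IFT gives $(x(\cdot),\mu(\cdot))\in C^1$ near $\theta_0$. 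I expect this nonsingularity to be the main obstacle: it is not implied by strictness of the maximum alone (a strict maximum can have degenerate reduced Hessian), and making it rigorous under exactly the stated hypotheses is the delicate work that the cited \cite[Theorem 6.1.1]{C} packages.

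With the $C^1$ branch in hand, the formula follows in one line. For $\theta$ near $\theta_0$, complementary slackness gives $g_i(x(\theta),\theta)=0$ for $i\in A$ and $\mu_i(\theta)=0$ for $i\notin A$, so $v(\theta)=f(x(\theta),\theta)=L(x(\theta),\mu(\theta),\theta)$ identically; since the last expression is $C^1$, so is $v$. Differentiating at $\theta_0$ and using the chain rule,
\[
\nabla_\theta v(\theta_0)=\nabla_x L(x_0,\mu_0,\theta_0)\,D_\theta x(\theta_0)+\nabla_\mu L(x_0,\mu_0,\theta_0)\,D_\theta\mu(\theta_0)+\nabla_\theta L(x_0,\mu_0,\theta_0),
\]
where the first term vanishes by stationarity $\nabla_x L(x_0,\mu_0,\theta_0)=0$, and the second vanishes since $\nabla_\mu L=-(g_i)_i$ is zero in the binding coordinates while $D_\theta\mu$ is zero in the non-binding ones. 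Hence $\nabla_\theta v(\theta_0)=\nabla_\theta L(x_0,\mu_0,\theta_0)$, as asserted.
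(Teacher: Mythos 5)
You should know at the outset that the paper does not prove this statement at all: it is quoted as an external tool from Carter \cite[Theorem 6.1.1]{C}, so there is no internal proof to compare against. Your route --- discard inactive constraints, invoke KKT with a unique multiplier vector under the regularity (LICQ) hypothesis, apply the implicit function theorem to the KKT map to get a $C^1$ branch $\theta \mapsto (x(\theta), \mu(\theta))$, and then kill all terms in the chain rule using stationarity and complementary slackness --- is the standard textbook proof of smooth envelope theorems, and your final computation is correct \emph{granted} a $C^1$ branch that realizes $v(\theta)$. It also mirrors how the authors themselves use the implicit function theorem in the proof of Theorem \ref{pos_john_sm}.

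The genuine gap is the one you flag, and it is not something the citation can be blamed for absorbing if the goal is to prove the theorem under the hypotheses as literally stated: strict local maximality plus LICQ does \emph{not} give the nonsingularity of the bordered Jacobian. Strictness yields only second-order \emph{necessary} conditions (negative semidefiniteness of $\nabla^2_{xx}L$ on $\ker B^T$), not the definiteness you invoke as ``the second-order sufficiency accompanying a regular strict constrained maximum''; already $f(x,\theta) = -x^4 + \theta x$ with no constraints has a strict maximum at $x_0 = 0$ with $\nabla^2_{xx}L = 0$, so no implicit-function branch exists (the conclusion happens to survive there, but not by your argument). Two further holes: your KKT map pins $g_i(x(\theta),\theta)=0$ for \emph{every} $i \in A$, so if some binding constraint has $\mu_{0,i}=0$ (no strict complementarity) the true maximizer may abandon that constraint for nearby $\theta$, and the branch may acquire negative multipliers, breaking the identification $v(\theta)=f(x(\theta),\theta)$; and since $v$ is defined as a global supremum, a strict \emph{local} maximum need not realize $v$ at all --- your ball construction only controls a local value function, and nothing in the hypotheses ties it to $v$. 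The honest conclusion is that the theorem as transcribed in the paper is a loose rendering of Carter's result, whose actual hypotheses include exactly the nondegeneracy you are missing; in the paper's sole application (the proof of Theorem \ref{saddle_john_contact}) strict concavity of $\log\det$ and strict convexity of the constraints supply that nondegeneracy, so nothing downstream is harmed, but your write-up proves the theorem only under those additional second-order and strict-complementarity assumptions.
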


\begin{theorem}\label{saddle_john_contact}
Let $K \in \mathcal K^n_o$, $L \in \mathcal K^n$, and suppose $L$ is in saddle-John position inside $K$. Then there exist contact pairs $(x_1, y_1), \ldots, (x_m, y_m)$  of $K, L$ and $c_1,\ldots,c_m > 0$ such that:
\begin{enumerate}
\item $\sum_{i = 1}^m c_i y_i = 0$.
\item $\sum_{i = 1}^m c_i (x_i \otimes y_i) = I_n$.
\end{enumerate}
\end{theorem}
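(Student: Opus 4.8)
The strategy is to supplement the \emph{symmetrized} decomposition supplied by Theorem~\ref{pos_john_contact} with the first-order information available at a critical point of the volume functional $U\mapsto\det P^*(U)$ on $O_n$. Since $L$ is in saddle-John position it is, in particular, its own positive John image in $K$, so Theorem~\ref{pos_john_contact} produces contact pairs $(x_i,y_i)$ of $K,L$ and weights $c_i>0$ with $\sum_i c_i y_i=0$ and $\sum_i c_i (x_i\otimes y_i)_{sym}=I_n$. Write $M=\sum_i c_i (x_i\otimes y_i)$ and let $A=M-I_n$ be its antisymmetric part; the entire content of the theorem beyond Theorem~\ref{pos_john_contact} is the assertion that $A=0$, which turns the second identity into $\sum_i c_i (x_i\otimes y_i)=I_n$. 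The reason $A$ should vanish: $I\in O_n$ is a global minimum, hence a critical point, of $U\mapsto\det P^*(U)$, so the differential of this map vanishes along every $N\in\mathfrak{o}_n(\mathbb R)$; and computing that differential through the Lagrangian of the underlying constrained problem gives exactly $N\mapsto -\langle N,M\rangle$, whose vanishing on all antisymmetric $N$ means precisely that $M$ is symmetric, i.e.\ $A=0$.

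To make the differentiation rigorous I would first reduce, by approximation, to the setting of Theorem~\ref{pos_john_sm}: $K$ a $C^\infty_+$ body, $L$ a polytope, and the contact pairs and weights of \eqref{iso_trans}--\eqref{iso_dil} uniquely determined. Here one approximates $K$ by smooth strictly convex bodies and $L$ by polytopes, uses continuity of the positive John family (Proposition~\ref{pos_john_contin}) together with upper hemicontinuity of the argmin defining the saddle-John image to keep the approximants (essentially) in saddle-John position, and makes a further small perturbation to secure the uniqueness hypothesis. Since the conclusion is a closed condition -- the existence of at most $\binom{n+1}{2}+n$ contact pairs and positive weights (the bound by Carath\'eodory) with $\sum_i c_i y_i=0$ and $\sum_i c_i(x_i\otimes y_i)=I_n$ -- extracting a convergent subsequence of the contact data and using closedness of the contact-pair relation under Hausdorff convergence transfers the conclusion back to the original pair $K,L$.

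In the regular setting the computation is an application of the envelope theorem. With $g_K$ the gauge of $K$ and $x_1,\dots,x_m$ the vertices of $L$, fix $N\in\mathfrak{o}_n(\mathbb R)$, set $U_t=e^{tN}\in O_n$, and consider the problems $\max\log\det P$ subject to $g_i(P,z,t):=g_K(PU_tx_i+z)-1\le 0$, with value function $v(t)=\log\det P^*(U_t)$. By Theorem~\ref{pos_john_sm} (and the uniqueness hypothesis, which makes the binding constraints regular, as in the proof of that theorem) $v$ is $C^1$ near $t=0$, so Theorem~\ref{envel_thm} gives $v'(0)=\partial_t\mathcal L(P_0,z_0,\mu_0,0)$, where $\mathcal L=\log\det P-\sum_i\mu_i g_i$, $(P_0,z_0)=(I,0)$, and the multipliers $\mu_i$ coincide with the weights $c_i$ of Theorem~\ref{pos_john_contact}. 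For a contact vertex $x_i$ one has $\partial_t g_i(I,0,0)=\langle\nabla g_K(x_i),Nx_i\rangle=\langle y_i,Nx_i\rangle$ since $\nabla g_K(x_i)=y_i$, while $\mu_i=0$ for non-contact vertices; hence $v'(0)=-\sum_i c_i\langle y_i,Nx_i\rangle=-\langle N,\sum_i c_i(x_i\otimes y_i)\rangle=-\langle N,M\rangle$. Since $t=0$ minimizes $v$, this is zero for every antisymmetric $N$, so $M$ is orthogonal to $\mathfrak{o}_n(\mathbb R)$, i.e.\ symmetric; with $M_{sym}=I_n$ this gives $M=I_n$, and together with $\sum_i c_i y_i=0$ this is exactly conclusions (1) and (2).

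The principal obstacle is the regularization step -- simultaneously arranging smoothness of $K$, polytopality of $L$, and uniqueness of the contact configuration while (approximately) preserving the saddle-John property, and then running the limiting argument. One can bypass it by a self-contained variational argument using only Theorem~\ref{pos_john_contact} and continuity of the positive John family: assuming $A\ne 0$, test with the rotation $U_t=e^{tA}$ and compare $K$ with the polytope $H=\bigcap_i\{x:\langle y_i,x\rangle\le 1\}\supseteq K$, for which $L$ is in positive John position. If $(I+R_t)(U_tL)+w_t\subseteq H$ realizes the positive John image of $U_tL$ in $H$, then $R_t,w_t\to 0$ by continuity, and applying the subgradient inequality $h_L(y_i+v)\ge 1+\langle x_i,v\rangle$ at each contact pair, multiplying by $c_i$ and summing (using $\sum_i c_i y_i=0$ and $\langle A,I_n+A\rangle=\|A\|^2$) yields $\Tr R_t+t\|A\|^2\le o(t)$; hence $\det P^*_{K,L}(U_t)\le\det P^*_{H,L}(U_t)=\det(I+R_t)\le e^{\Tr R_t}<1$ for small $t>0$, contradicting that $L$ is in saddle-John position in $K$ (where $\det P^*\ge 1$ on $O_n$). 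Either route yields $A=0$, completing the proof.
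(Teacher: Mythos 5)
Your proposal is correct, but the route you actually recommend (the ``self-contained variational argument'') is genuinely different from the paper's, and it is worth comparing the two. The paper's proof has the same first half as yours -- the envelope-theorem computation showing that $\sum c_i (x_i\otimes y_i)$ is Hilbert--Schmidt orthogonal to $\mathfrak o_n(\mathbb R)$ at a critical point of $U\mapsto\det P^*(U)$ -- but its reduction to the regular case is \emph{not} a limiting approximation: it is a one-shot sandwich construction $L'\subset L\subset K\subset K'$ with $K'$ a $C^2_+$ body and $L'$ a polytope chosen so that a fixed \emph{minimal} set of contact pairs of $K,L$ survives as the full set of contact pairs of $K',L'$, whence $\det Q^*\ge\det P^*$ forces $L'$ to be in saddle-John position inside $K'$ and the regular case applies. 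Your first route (approximate, keep the approximants ``essentially'' in saddle-John position, perturb for uniqueness, pass to the limit) does have a real gap as stated -- nothing guarantees the approximating pairs remain at or near a minimum of their own $\det P^*$, and ``essentially in saddle-John position'' is not a hypothesis under which the regular case applies -- so on its own it would not be acceptable. But your second route closes the argument completely and more economically: replacing $K$ by the polytope $H=\bigcap_i\{x:\langle y_i,x\rangle\le 1\}$ (bounded, since $\sum c_i y_i=0$ with the $y_i$ spanning puts $0$ in the interior of $\conv\{y_i\}$), noting that the same contact data makes $L$ positive-John in $H$ by the sufficiency half of Theorem \ref{pos_john_contact}, and running the first-order perturbation $U_t=e^{tA}$ by hand via the subgradient inequality $h_L(y_i+v)\ge 1+\langle x_i,v\rangle$, you get $\Tr R_t\le -t\|A\|^2+o(t)$ and hence $\det P^*_{K,L}(U_t)\le\det P^*_{H,L}(U_t)\le e^{\Tr R_t}<1$ for small $t>0$, contradicting $\det P^*_{K,L}\ge 1$. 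I checked the computation ($\langle e^{-tA}(I+R_t)-I, M^T\rangle=\Tr R_t+t\|A\|^2+o(t)\le 0$, using $R_t\to 0$ from Proposition \ref{pos_john_contin_on}) and it is sound. This version dispenses with the envelope theorem, the smoothness and uniqueness hypotheses of Theorem \ref{pos_john_sm}, and the minimality of the contact set -- indeed it shows that \emph{every} weighted contact configuration satisfying \eqref{iso_trans}--\eqref{iso_dil} is automatically unsymmetrized, slightly strengthening the remark following the paper's proof. What the paper's framework buys in exchange is the reusable envelope-theorem machinery, which it deploys again in the subsequent proposition on images $P^{-\frac12}L'$.
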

(The first conclusion already follows from the fact that $L$ is in positive John position, but we have included it for completeness in the statement of the theorem.)

\begin{proof}The proof consists of two main steps: first, if $K, L$ are a sufficiently nice pair of bodies and $L$ is in saddle-John position in $K$, the envelope theorem will enable us to use the condition that $\nabla_U \left.\det P^*\right|_{U = I} = 0$ to obtain an decomposition of the identity supported on contact pairs. The existence of a decomposition of the identity for the saddle-John positions of general pairs of bodies will follow by approximation.

First, suppose that $K$ is $C^2_+$, $L = \conv(x_1, \ldots, x_m)$ is a polytope, and that $K, L$ satisfy the hypotheses of Theorem \ref{pos_john_sm}. As in the proof of that theorem, $(P^*(U), z^*(U))$ is the solution to the optimization problem with objective function $f(U, P, z) = \log \det P$ and constraints $g_i(U, P, z) = g_K(P U x_i + z) - 1$; our assumptions imply that $(P^*(I), z^*(I)) = (I, 0)$, that $\nabla_U f(I, I, 0) = 0$, and, as we saw in the course of the proof, that the vectors $\nabla_{P, z} g_i(I, I, 0) = (x_i \otimes y_i, y_i)$ for $x_i \in \partial K \cap \partial L$ are linearly independent, and that the weights $c_i$ in the equation $\sum c_i ((x_i \otimes y_i)_{sym}, y_i) = (I, 0)$ are the Lagrange multipliers for the optimization problem at $U = I$. Letting $U(t) = e^{tA}$ be an arbitrary one-parameter subgroup of $O_n$, with $A$ antisymmetric, the envelope theorem thus yields that
\begin{align*}
0 &= \left.\frac{d}{dt}\right|_{t = 0} f(U(t), P^*(U(t)), z^*(U(t))) = \left.\frac{d}{dt}\right|_{t = 0} \left( f(U(t), I, 0) - \sum_{i: x_i \in \partial K} c_i g_i(U(t), I, 0) \right) \\
&= 0 - \sum_{i: x_i \in \partial K} c_i \left.\frac{d}{dt}\right|_{t = 0} g_K(e^{tA} x_i) = \sum c_i \langle y_i, Ax_i\rangle = \left\langle A, \sum c_i (x_i \otimes y_i)\right\rangle.
\end{align*}
Since $\left\langle A, \sum c_i (x_i \otimes y_i)\right\rangle$ vanishes for all one-parameter subgroups $e^{tA}$, i.e., all antisymmetric matrices $A$, we obtain that $\sum c_i (x_i \otimes y_i)$ is symmetric. Since $L$ is in positive John position, its symmetric part is $I_n$, so we obtain that $\sum c_i (x_i \otimes y_i) = I_n$, as desired.

Now let $K, L$ be arbitrary convex bodies such that $L$ is in saddle-John position inside $K$, and let $(x_i, y_i)$, $i = 1, \ldots, k$ be a minimal set of contact pairs of $K, L$ such that there exist weights $c_i$ satisfying Equations \eqref{iso_trans}, \eqref{iso_dil}. Let $K'$, $L'$ be convex bodies satisfying the following conditions:
\begin{enumerate}
  \item $L' \subset L \subset K \subset K'$.
  \item $K'$ is $C^2_+$ and $L$ is a polytope.
  \item $y_i \in \partial (K')^\circ$ for all $i$ (which implies $x_i \in \partial K'$) and $x_i \in \partial L'$ for all $i$ (which implies $y_i \in (\partial L')^\circ$).
  \item $\partial L \cap \partial K' = \{x_1, \ldots, x_k\}$.
\end{enumerate}
Let $(Q^*, w^*)$ be the positive John families associated to the pair $K', L'$, and $(P^*, z^*)$ the family associated to $K, L$. Since $P^*(U) U L' + z^*(U) \subset P^*(U) U L + z^*(U) \subset K \subset K'$, while $Q^*(U) U L' + w^*(U)$ maximizes the volume of positive images of $U L'$ contained in $K'$, we must have $\det Q^* \ge \det P^*$ on $O_n$.

By assumption, $1 = \det I = \min_U \det P^*(U)$; since $(x_i, y_i)$ are contact pairs of $K', L'$ satisfying Equations \eqref{iso_trans}, \eqref{iso_dil} and positive John position is characterized by these conditions, we have $(Q^*(I), w^*(I)) = (I, 0)$ as well, and in particular, $\det Q^*(I) = 1$. Since $\det Q^* \ge \det P^*$, we have $1 = \min Q^*$, so that $L'$ is in saddle-John position inside $K'$. We claim that $K', L'$ satisfy the hypotheses of Theorem \ref{pos_john_sm}. First, since $L' \subset L \subset K'$,
\begin{equation}
\{x_1, \ldots, x_k\} \subset \partial L_m \cap \partial L \subset \partial L_m \cap \partial K = \{x_1, \ldots, x_k\}.
\end{equation}
As each point $x_i \in \partial K' \cap \partial L'$ uniquely determines the contact pair $y_i = \frac{n_K(x_i)}{h_K(x_i)}$ because $K'$ is $C^2_+$, we see that the only contact pairs of $K'$ and $L'$ are the $(x_i, y_i)$, and by our choice of $(x_i, y_i)$, the weights $c_i$ such that $\sum c_i ((x_i \otimes y_i)_{sym}, y_i) = (I, 0)$ are unique. Hence, by the first part of the proof, $\sum c_i (x_i \otimes y_i) = I_n$, and we are done.
\end{proof}

\begin{remark}Examining the proof shows that the statement of the theorem can actually be mildly strengthened: if $L$ is in saddle-John position inside $K$ and $\{(x_i, y_i)\}_{i = 1}^k$ is a minimal set of contact pairs such that $\sum c_i ((x_i \otimes y_i)_{sym}, y_i) = (I, 0)$ then necessarily $\sum c_i (x_i \otimes y_i) = I$.
\end{remark}

It follows immediately from the definition that if $L' = P^*(U) UL + z^*(U)$ is a saddle-John image of $L$ inside $K$ (i.e., $(P^*(U), z^*(U)) = (P, z)$ and $\det P^*(V) \ge \det P$ for all $V \in O_n$), then $L'$ is a saddle-John image of $VL$ for every $V \in O_n$. However, as for positive John position, it is not necessarily the case that $L'$ is itself in saddle-John position in $K$. By analogy with Proposition \ref{prop:pos_john_im}, one might expect that $L'' = P^*(U)^{\frac{1}{2}} L + P^*(U)^{-\frac{1}{2}} z^*(U)$ is in saddle-John position inside $P^{-\frac{1}{2}} K$. We do not know how to prove this, but we can prove that $L''$ shares with the saddle-John position the property of supporting a genuine decomposition of the identity:

\begin{proposition}Suppose that $L' = P UL$ is a saddle-John image of $L$ inside $K$, and let $L'' = P^{-\frac{1}{2}} L'$, $K' = P^{-\frac{1}{2}} K$. Then $L''$ is in positive-John position inside $K'$; moreover, there exist contact pairs $(x_1, y_1), \ldots, (x_m, y_m)$  of $K', L''$ and $c_1,\ldots,c_m > 0$ such that:
\begin{enumerate}
\item $\sum_{i = 1}^m c_i y_i = 0$.
\item $\sum_{i = 1}^m c_i (x_i \otimes y_i) = I_n$.
\end{enumerate}
\begin{proof}We will prove this in the case where $K, L$ satisfy the hypotheses of Theorem \ref{pos_john_sm}; the extension to general pairs is similar to the argument of the previous proof and left to the reader.

The first statement is just Proposition \ref{prop:pos_john_im}. By Theorem \ref{pos_john_contact}, we thus get contact pairs $(x_i', y_i')$ of $K', L''$, along with weights $c_i$, such that $\left(\sum c_i (x_i' \otimes y_i')\right)_{sym} = I_n$. Write $(x_i', y_i') = (P^{\frac{1}{2}} x_i,  P^{\frac{1}{2}} y_i)$; noting that $(P^{-\frac{1}{2}} K)^\circ = P^{\frac{1}{2}} K^\circ$, we see that the $(x_i, y_i)$ are contact pairs of $UL$ and $K$. Letting $M = \sum c_i x_i \otimes y_i$ we see that $(P^{\frac{1}{2}} M P^{\frac{1}{2}})_{sym} \propto I$; equivalently, $M_{sym} \propto P^{-1}$. 

Next, applying the envelope theorem to $UL$ and $K$ as in the proof of Theorem \ref{saddle_john_contact}, and using the fact that $x_i$ and $y_i$ are contact pairs, we obtain that for any antisymmetric $A$, 
$$0 = \left.\frac{d}{dt}\right|_{t = 0} \sum c_i g_K(P e^{tA} x_i) = \sum c_i \langle y_i, P A x_i\rangle = \sum c_i \langle P y_i, A x_i\rangle$$ 
(the first equality follows from noting, as before, that $y_i \propto n_K(x_i)$), which implies that $\left(\sum c_i x_i \otimes y_i\right) P = MP$ is symmetric. We claim, moreover, that $M$ and $P$ commute. Assuming this, $M = MP \cdot P^{-1}$ is the product of two symmetric commuting matrices, hence symmetric, so $M = M_{sym} \propto P^{-1}$ and thus $P^{\frac{1}{2}} M P^{\frac{1}{2}} =  \sum c_i x_i' \otimes y_i'$ is proportional to $I_n$, and by simply scaling the $c_i$ we may obtain $I_n$, as desired.

It remains to prove that $M$ and $P$ commute. Let $S = [P, M]$ be their commutator; we have $S = PM - MP = M^T P - P M^T = (PM - MP)^T = S^T$. In addition, $[P, MP] = [P, M] P = SP$, the product of a symmetric matrix and a positive-definite matrix, which must therefore have real eigenvalues (as in the proof of Lemma \ref{gen_pol_decomp}); but $P$ and $MP$ are symmetric, and the commutator of symmetric matrices is obviously antisymmetric, and thus has imaginary eigenvalues. The only way both statements can hold is if $S = 0$, and we are done. 
\end{proof}{}
\end{proposition}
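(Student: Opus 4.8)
The first assertion is immediate: since the conclusion refers only to $L'$, $L''=P^{-1/2}L'$, $K$, and $K'=P^{-1/2}K$, I may replace $L$ by $UL$ — which leaves the family $\{P^*(V)VL+z^*(V):V\in O_n\}$, and hence the saddle-John image, unchanged — and thereby assume $U=I$, so that $L'=PL$ with $\det P^*(I)=\det P$ minimal on $O_n$. Then Proposition \ref{prop:pos_john_im} says precisely that $L'=PL$ is the positive John image of $L$ in $K$ if and only if $L''=P^{1/2}L$ is in positive John position inside $K'$. The substance of the proposition is the un-symmetrised decomposition of the identity, and the plan is to mimic the proof of Theorem \ref{saddle_john_contact}: first treat the case in which $K$ is $C^2_+$, $L$ is a polytope, and the contact data at $U=I$ is unique, so that Theorem \ref{pos_john_sm} applies; then pass to general $K,L$ by the same approximation argument as in that proof, which I would only sketch.

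In the smooth-polytope case, Theorem \ref{pos_john_contact} applied to $K',L''$ produces contact pairs $(a_i,b_i)$ of $K',L''$ and weights $c_i>0$ with $\sum c_ib_i=0$ (this is conclusion (1)) and $\bigl(\sum c_i\,a_i\otimes b_i\bigr)_{sym}=I_n$. Writing $a_i=P^{1/2}x_i$ and, using $(P^{-1/2}K)^\circ=P^{1/2}K^\circ$, $b_i=P^{1/2}\tilde y_i$, a direct check shows $x_i\in\partial L$, $Px_i\in\partial K$, $\tilde y_i\in\partial K^\circ$, $\langle Px_i,\tilde y_i\rangle=1$, and $P\tilde y_i\in\partial L^\circ$, so that $(Px_i,\tilde y_i)$ is a genuine contact pair of $K$ and $L'=PL$ and, as $K$ is $C^2_+$, $\tilde y_i\propto n_K(Px_i)$. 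Setting $M=\sum c_i\,x_i\otimes\tilde y_i$, the relation $\bigl(\sum c_i\,a_i\otimes b_i\bigr)_{sym}=P^{1/2}M_{sym}P^{1/2}=I_n$ gives $M_{sym}=P^{-1}$. Now the saddle hypothesis enters as in Theorem \ref{saddle_john_contact}: since $\log\det P^*$ has a minimum on $O_n$ at $I$ and is differentiable there by Theorem \ref{pos_john_sm}, the envelope theorem (Theorem \ref{envel_thm}), applied along one-parameter subgroups $e^{tA}$ with $A$ antisymmetric and using $\tilde y_i\propto n_K(Px_i)$, should yield $0=\sum c_i\langle P\tilde y_i,Ax_i\rangle=\langle A,MP\rangle$ for all antisymmetric $A$; hence $MP$ is symmetric.

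The last step — which I expect to be the real obstacle — is to deduce from ``$M_{sym}=P^{-1}$ and $MP$ symmetric'' that $M$ itself is symmetric, equivalently that $M$ and $P$ commute (a product of commuting symmetric matrices being symmetric). Here I would use the eigenvalue trick of Section \ref{sec:Prelim-and-LinAlg}: from $M+M^{T}=2P^{-1}$ and $MP=(MP)^{T}$ one computes that $S:=[P,M]=PM-MP$ is symmetric, while $[P,MP]=SP$ is a commutator of symmetric matrices and hence antisymmetric; but $SP$ is similar to the symmetric matrix $P^{1/2}SP^{1/2}$, so it has real eigenvalues, and being simultaneously antisymmetric it must vanish, forcing $S=0$. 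Then $M=M_{sym}=P^{-1}$, whence $\sum c_i\,a_i\otimes b_i=P^{1/2}MP^{1/2}=I_n$, which is conclusion (2). The transport of the contact data through $P^{\pm1/2}$ and the envelope-theorem computation are routine but are where sign errors are most likely to creep in; the commutator argument is the conceptual heart of the proof.
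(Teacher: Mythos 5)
Your proposal is correct and follows essentially the same route as the paper: reduce the first claim to Proposition \ref{prop:pos_john_im}, obtain the symmetrized decomposition from Theorem \ref{pos_john_contact}, transport the contact data through $P^{\pm 1/2}$ to get $M_{sym}\propto P^{-1}$, use the envelope theorem at the minimum of $\det P^*$ over $O_n$ to show $MP$ is symmetric, and conclude via the commutator/eigenvalue argument that $[P,M]=0$, hence $M$ is symmetric. The only differences are cosmetic (your normalization gives $M_{sym}=P^{-1}$ exactly rather than up to a constant, and your bookkeeping of which points lie on which boundaries is in fact slightly more careful than the paper's), and like the paper you restrict to the hypotheses of Theorem \ref{pos_john_sm} and defer the general case to the approximation argument of Theorem \ref{saddle_john_contact}.
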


Gordon, Litvak, Meyer and Pajor \cite[Theorem 5.1]{GLMP} showed that for any two convex bodies $K, L$, there exists a translation $K'$ of $K$ and an affine image $L'$ of $L$ such that $L' \subset K' \subset -n L'$. These are obtained as follows: first, one finds a maximal volume image $L''$ of $L$ in $K$, and then translates $L''$ and $K$ according to the formulas we gave in the remark following the proof of Theorem \ref{pos_john_contact} (just before \S \ref{subsection:set_valued}); the main observation is that the existence of contact pairs $(x_i, y_i)$ and weights $c_i$ such that $\sum c_i x_i = \sum c_i y_i = 0$ and $\sum c_i x_i \otimes y_i = Id$ for $L \subset K$ guarantees that $K \subset -nL$. Since saddle-John images also yield decompositions of the identity, the same argument shows that (up to translations), $K$ is contained in $-n L_{saddle}$; for completeness, we shall give the proof, which directly follows \cite{GLMP}. 

\begin{proposition}\label{saddle_dilate} Suppose that $L_s = PUL + z$ is the saddle-John image of $L$ inside $K$. Then there exists $a \in \mathbb R^n$ such that $K - a \subset -n(L_s - a)$.
\begin{proof}By the preceding proposition, the pair $L' = P^{-\frac{1}{2}} L_s$, $K' = P^{-\frac{1}{2}} K$ supports a decomposition of the identity with contact pairs $(x_i, y_i)$ and weights $c_i$, and by the remark following the proof of Theorem \ref{pos_john_contact}, by translating $L'$ and $K'$ we may assume $\sum c_i x_i = \sum c_i y_i = 0$. It suffices to show that under these conditions, $K' \subset -n L'$; multiplying $K'$ and $L'$ by $P^{\frac{1}{2}}$ will then yield the desired conclusion.

Let $x \in K'$; we wish to show $x \in -nL'$. Since $\sum c_i (x_i \otimes y_i) = I_n$, we have $x = \sum c_i x_i \langle y_i, x\rangle$. Since $\sum c_i x_i = 0$, the RHS also equals $\sum c_i (1 - \langle y_i, x\rangle) (-x_i)$; as $y_i \in (K')^\circ$, $\langle y_i, x\rangle \le 1$, all the coefficients are positive and so 
$$\sum c_i (1 - \langle y_i, x\rangle) (-x_i) \in \sum c_i (1 - \langle y_i, x\rangle) (-L')$$
by convexity, as $-x_i \in -L'$. But 
\begin{equation}
\sum c_i (1 - \langle y_i, x\rangle) = \sum c_i - \left\langle \sum c_i y_i, x \right\rangle = \sum c_i = n
\end{equation}
because $\sum c_i y_i = 0$, so we obtain $x \in -n L'$, as desired.
\end{proof}
\end{proposition}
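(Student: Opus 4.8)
The plan is to reduce the statement to the mechanism behind \cite[Theorem 5.1]{GLMP}, namely the implication: if $\tilde L \subset \tilde K$ and there are contact pairs $(x_i, y_i)$ of $\tilde K, \tilde L$ and weights $c_i > 0$ with $\sum c_i x_i = \sum c_i y_i = 0$ and $\sum c_i (x_i \otimes y_i) = I_n$, then $\tilde K \subset -n \tilde L$. One cannot feed the pair $L_s \subset K$ into this directly, because $L_s$ is only the saddle-John \emph{image} of $L$ and need not be in saddle-John position, so it is not guaranteed a \emph{genuine} (un-symmetrized) decomposition of the identity. The remedy is supplied by the preceding proposition: passing to $K' = P^{-\frac{1}{2}} K$ and $L'' = P^{-\frac{1}{2}} L_s$, that proposition gives that $L''$ is in positive John position inside $K'$ and, crucially, that there are contact pairs $(x_i, y_i)$ of $K', L''$ and $c_i > 0$ with $\sum c_i y_i = 0$ and $\sum c_i (x_i \otimes y_i) = I_n$. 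Since $L_s \subset K$ (it is a positive image of $L$ contained in $K$), we have $L'' \subset K'$, so this pair is admissible.

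Next I would apply the recentering recorded in the remark following the proof of Theorem~\ref{pos_john_contact}: translating $K'$ and $L''$ by one common vector $-t$, with $t = \frac{1}{n+1}\sum c_i x_i$, and correcting the contact pairs and weights accordingly produces, for the translated pair $\tilde K = K' - t$, $\tilde L = L'' - t$, new contact pairs and positive weights which, besides still satisfying $\sum c_i y_i = 0$ and $\sum c_i (x_i \otimes y_i) = I_n$, now also satisfy $\sum c_i x_i = 0$; taking the trace of the last identity gives $\sum c_i = n$. (I am reusing the symbols $(x_i, y_i)$, $c_i$ for the recentered data.)

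With the doubly centered decomposition in hand, the containment $\tilde K \subset -n \tilde L$ follows by the short argument of \cite{GLMP}. For $x \in \tilde K$, the identity $\sum c_i (x_i \otimes y_i) = I_n$ gives $x = \sum c_i \langle y_i, x\rangle x_i$, which, using $\sum c_i x_i = 0$, equals $\sum c_i (1 - \langle y_i, x\rangle)(-x_i)$. As $y_i \in \tilde K^\circ$ we have $\langle y_i, x\rangle \le 1$, so the coefficients $c_i(1 - \langle y_i, x\rangle)$ are nonnegative, and they sum to $\sum c_i - \langle \sum c_i y_i, x\rangle = n$; since $-x_i \in -\tilde L$, convexity then gives $x \in -n\tilde L$.

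Finally I would transport the inclusion back through $P^{\frac{1}{2}}$: applying $P^{\frac{1}{2}}$ to $\tilde K = K' - t \subset -n(L'' - t) = -n \tilde L$ yields $K - P^{\frac{1}{2}} t \subset -n(L_s - P^{\frac{1}{2}} t)$, so the proposition holds with $a = P^{\frac{1}{2}} t$. I expect essentially all the content to lie upstream, in the preceding proposition — that $P^{-\frac{1}{2}} L_s$ supports a genuine rather than merely symmetrized decomposition of the identity inside $P^{-\frac{1}{2}} K$ — and in particular in making that proposition valid for arbitrary pairs $K, L$, not only $C^2_+$-versus-polytope pairs; once that is granted, the present argument is the standard GLMP manipulation and is largely bookkeeping.
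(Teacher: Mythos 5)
Your proposal is correct and follows essentially the same route as the paper: reduce via the preceding proposition to the pair $P^{-\frac{1}{2}}K$, $P^{-\frac{1}{2}}L_s$ which carries a genuine (un-symmetrized) decomposition of the identity, recenter using the remark after Theorem \ref{pos_john_contact} so that $\sum c_i x_i = \sum c_i y_i = 0$, run the standard GLMP convexity argument to get the inclusion in $-n$ times the body, and transport back through $P^{\frac{1}{2}}$. Your explicit bookkeeping of the translation vector $a = P^{\frac{1}{2}}t$ is a slightly more careful rendering of the paper's final sentence, but the substance is identical.
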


\subsection{Examples}\label{subsec:saddle_examples}
Given a pair of bodies $K, L$, it is interesting to compare the volumes of the maximal-volume and saddle-John positions of $L$ inside $K$. It is also natural to ask whether the position of maximal volume or the saddle-John position of $L$ inside $K$ is more ``typical'': more precisely, one can ask whether the volumes of the positive John family of $L$ inside $K$ are close to the maximal volume ``most of the time'' (in the sense of Haar measure on $O_n$), close to the minimal volume, or perhaps neither. We give three examples showcasing various kinds of behavior.

First, let $K = B^n_\infty$, $L = B^n_1$. It's clear that $L$ is in saddle-John position, as for any $U$ we have $UL \subset B^n_2 \subset K$. On the other hand, it is well-known that for any $n$ there exists an orthonormal basis $\{u_i\}_{i = 1}^n$ such that $\max_i \max_j |\langle u_i, e_j\rangle| \le \frac{2}{\sqrt n}$, which (following \cite{K}), we call a Walsh basis. In particular, for $n = 2^m$, the usual Hadamard-Walsh basis of $\mathbb R^n$ satisfies $|\langle u_i, e_j\rangle| = \frac{1}{\sqrt n}$ for all $i, j$, which is optimal. $L' = \conv \{\pm u_i\}$ is an orthogonal image of $L$ which clearly satisfies $\frac{\sqrt n}{2} L' \subset K$, and we claim that this is asymptotically optimal: i.e., if $L_{max\text{-}vol} = M L$ is a maximal volume image of $L$, then $\det(M)^{\frac{1}{n}} = O(\sqrt n)$. Indeed, let $M \in GL_n$ be arbitrary and let $V D U$ be its singular value decomposition; then $M L \subset K$ is equivalent to $D(UL) \subset V^T K$. Letting $u_i = U e_i$, $v_i = V^T e_i$, this is equivalent to the condition
\begin{equation}
\max_{ij} d_i |\langle u_i, v_j\rangle| \le 1.
\end{equation}
Let $i_0 = \argmax d_i$; we have $\sum_j \langle u_{i_0}, v_j\rangle^2 = 1$ and so $\max_j |\langle u_{i_0}, v_j\rangle| \ge \frac{1}{\sqrt n}$, implying $d_i \le \sqrt n$. Hence, $\left(\frac{|L_{max\text{-}vol}|}{|L_{saddle}|}\right)^{\frac{1}{n}} \le \sqrt n$ and up to a factor of $2$ (which we can do without if $n = 2^m$), this is achieved by a rotation to a Walsh basis followed by a dilation.

What about a random orthogonal basis? It's well-known that if $U$ is a random orthogonal matrix, with high probability every entry of $U$ satisfies $|u_{ij}| \le \sqrt{\frac{\log n}{n}}$. We'll copy the simple proof from \cite[Lemma 2.3]{K}: each row of $U$ is a uniformly distributed vector on $S^{n - 1}$, and Lipschitz concentration on the sphere applied to the function $x \mapsto x_j$ yields 
\begin{equation}
\mathbb P(|u_{ij}| \ge \epsilon) \le c e^{-\frac{\epsilon^2 n}{2}}
\end{equation} for any $i, j$ and $\epsilon$. Taking $\epsilon = 20 \sqrt{\frac{\log n}{n}}$, say, we obtain that $|u_{ij}| \le 20 \sqrt{\frac{\log n}{n}}$ with probability $1 - n^{-10}$, and a union bound yields that 
\begin{equation}
\mathbb P\left(\max_{i, j} |u_{ij}| < 20 \sqrt{\frac{\log n}{n}}\right) \ge 1 - n^{-8}.\end{equation}
Thus $\frac{1}{20}\sqrt{\frac{n}{\log n}} U B^n_1 \subset B^n_\infty$ with high probability, i.e., the positive John family of $B^n_1$ is usually close to its position of maximal volume, up to a logarithmic factor.

Dually, take $L = B^n_\infty$, $K = B^n_1$. We have $\frac{1}{n} L \subset K$, with contact pairs $\{(\frac{1}{n} \epsilon, \epsilon): \epsilon \in \{\pm 1\}^n\}$; it's clear from symmetry that these support a decomposition of the identity, so this is the positive John image of $L$ in $K$. Moreover, $\frac{1}{n} L$ is a saddle-John position: indeed, $\frac{1}{n} L \subset \frac{1}{\sqrt n} B^n_2 \subset K$, so $\frac{1}{n} UL \subset K$ for any $U \in O_n$, implying that all images in the positive John family of $L$ inside $K$ have at least the volume of $\frac{1}{n} L$. On the other hand, 
\begin{equation}
\frac{\vol(K)}{\frac{1}{n} \vol(L)} = \frac{2^n}{n!} \cdot \left(\frac{2}{n}\right)^{-n} \sim \frac{e^n}{\sqrt{2\pi n}}
\end{equation}
by Stirling's approximation, which means that any image of $L$ inside $K$ has volume at most $(e + o(1))^n$ times the volume of the saddle-John image. In this case, then, the disparity between saddle-John position and position of maximal volume is not too significant.

As a third example, consider $K = L = B^n_\infty$. Obviously, $B^n_\infty$ is in a position of maximal volume inside itself. On the other hand, suppose $n = 2^m$, and let $U$ be an orthogonal matrix sending some Hadamard-Walsh basis of $\mathbb R^n$ to the standard basis. Then $UB^n_\infty$ contains the vectors $\pm \sqrt n e_1, \ldots, \pm \sqrt n e_n$ and is contained in $\sqrt n B^n_2$, so the standard basis yields a set of contact pairs for $\frac{1}{\sqrt n} UB^n_\infty \subset B^n_\infty$. Hence $\frac{1}{\sqrt n} UB^n_\infty$ is the positive John position of $UB^n_\infty$ inside $B^n_\infty$; since $V B^n_\infty \subset \sqrt n B^n_\infty$ for any $V \in O_n$, $\frac{1}{\sqrt n} U B^n_\infty$ is obviously a saddle-John position of $B^n_\infty$ inside itself. Conversely, one easily sees that any saddle-John position of $B^n_\infty$ is obtained by this construction from some Hadamard basis of $\mathbb R^n$, i.e., a set of $n$ orthogonal vertices of $B^n_\infty$. (More precisely, this holds in any dimension $n$ for which there exists a Hadamard basis; we do not know how to characterize the saddle-John position of $B^n_\infty$ in itself in other dimensions.)

Finally, we claim that in any dimension, $\det P^*(U)^{\frac{1}{n}} = O(\sqrt{\frac{\log n}{n}})$ with high probability over $U \in O_n$ (recall that $P^*(U)$ is defined such that $P^*(U) UB^n_\infty$ is the positive John image of $UB^n_\infty$ in $B^n_\infty$). First, note that if $\alpha B^n_1 \subset UB^n_\infty$ for some $\alpha > 0$ then $\det P^*(U)^{\frac{1}{n}} \le \alpha^{-1}$. Indeed, $P^*(U) (\alpha B^n_1) \subset P^*(U) UB^n_\infty \subset B^n_\infty$, i.e., $P^*(U) (\alpha B^n_1)$ is a positive image of $\alpha B^n_1$ contained in $B^n_\infty$, and so $\vol(P^*(U) (\alpha B^n_1)) = \alpha^n \det P^*(U) \vol(B^n_1)$ is bounded above by the volume of the positive John image of $\alpha B^n_1$ in $B^n_\infty$. But we know, by the above, that this image is precisely $B^n_1$, which means that $ \alpha^n \det P^*(U) \le 1$, as claimed. So it is sufficient to show that $c \sqrt{\frac{n}{\log n}} B^n_1 \subset U B^n_\infty$ with high probability over $U$ for some absolute constant $c$; but this is precisely what we showed above when considering the positive John family of $B^n_1$ in $B^n_\infty$. In other words, the positive John family of $B^n_\infty$ in itself is usually ``closer'' to the saddle-John position, in terms of volume, than to the position of maximal volume.

\section{Positive John images inside ellipsoids}\label{section:pos_john_ellips}
If $K$ or $L$ is a Euclidean ball, the family of positive John images of $L$ inside $K$ is of course ``trivial'': for $L = B^n_2$, $P^*(U), z^*(U)$ do not depend on $U$ at all, and if $K = B^n_2$, $P^*$ and $z^*$ vary formulaically with $U$: if $(P^*(I), z^*(I)) = (P_0, z_0)$, then $(P^*(U), z^*(U)) = (U P_0 U^T, Uz)$, so that $P^*(U) UL + z^*(U) = U (P_0 L + z_0)$. In particular, the volume of each of the positive John images in either of these cases is independent of $U$.

Since ellipsoids are related to the Euclidean ball by a positive transformation, it is reasonable to expect that the positive John family of a convex body $L$ inside an ellipsoid $E$, or of an ellipsoid $E$ inside a convex body $L$, is also trivial in a similar sense. This expectation will be partially vindicated: in the course of the section, we shall see that in this case, $P^*$ and $z^*$ vary predictably with $U$, and that $\det P^*$ is constant, but the formula defining $P^*$ in terms of $U$ turns out to be rather complicated.

In the sequel, we will treat only the positive John family of a convex body $L$ inside an ellipsoid $E$; the case of the positive John family of an ellipsoid inside a convex body $K$ involves the same ideas. In addition, for simplicity, we shall assume $L$ is centrally symmetric, obviating the need to deal with translations; the extension to the non-symmetric case is routine.

\begin{proposition}\label{prop-PJPin-ellipsoid}
	Let $P \in \mathcal P^n$ be a positive matrix, $E = P B^n_2$ the corresponding 
	ellipsoid, and $L \in \mathcal K^n_s$ an arbitrary centrally symmetric convex body. Then $E$ is in positive John position with respect to $L$ if and only if $B^n_2$ is in positive John position with respect to $P^{-1} L$, i.e., $P^{-1} L$ is in L\"owner position. In particular, all the bodies in the positive John family of $L$ inside $E$ have the same volume.
\end{proposition}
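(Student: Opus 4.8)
The plan is to reduce the positive John position inside an ellipsoid to the classical minimal-volume (L\"owner) ellipsoid, after which both directions of the equivalence become a short volume computation. I would begin with a reduction: since $L$ is centrally symmetric and $E=PB^n_2$ is a centered ellipsoid, every positive image entering the optimization may be taken centered, because if $QL+z\subset E$ then $QL-z=-(QL+z)\subset -E=E$, whence $QL=\tfrac12(QL+z)+\tfrac12(QL-z)\subset E$ by convexity while $\vol(QL)=\vol(QL+z)$. Thus the positive John image of any rotation $UL$ ($U\in O_n$) inside $E$ has the form $Q^*(U)\,UL$ with $Q^*(U)\in\mathcal P^n$, and ``$L$ is in positive John position inside $E$'' means precisely that $L\subset E$ and $\det Q\le 1$ for every $Q\in\mathcal P^n$ with $QL\subset E$. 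Since $E=E_{\min}(L)$ also entails $L\subset E$, in both conditions one may assume $L\subset E$ from the outset.

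The crux is the following claim, which I would isolate and prove separately: \emph{for any centered ellipsoid $\mathcal E$ and any centrally symmetric body $M$,}
\[
\max\{\det Q:\ Q\in\mathcal P^n,\ QM\subset\mathcal E\}=\frac{\vol(\mathcal E)}{\vol(E_{\min}(M))},
\]
\emph{where $E_{\min}(M)$ is the minimal-volume ellipsoid containing $M$ (centered, by uniqueness and the symmetry of $M$), the maximum being attained at the unique $Q_0\in\mathcal P^n$ with $Q_0^{-1}\mathcal E=E_{\min}(M)$.} The proof would run as follows: $QM\subset\mathcal E$ iff $M\subset Q^{-1}\mathcal E$, and $Q\mapsto Q^{-1}\mathcal E$ is a bijection from $\mathcal P^n$ onto the set of centered ellipsoids. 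Surjectivity is the one nontrivial point: carrying an arbitrary centered ellipsoid onto $\mathcal E$ by a \emph{symmetric} positive-definite matrix amounts to solving an equation of the form $QSQ=T$ for $S,T\in\mathcal P^n$, whose unique positive solution $Q=S^{-1/2}(S^{1/2}TS^{1/2})^{1/2}S^{-1/2}$ is exactly the one exhibited in the proof of Lemma \ref{gen_pol_decomp}, and injectivity is the same argument used there. Since $\vol(Q^{-1}\mathcal E)=\vol(\mathcal E)/\det Q$, maximizing $\det Q$ over $\{QM\subset\mathcal E\}$ is identical to minimizing the volume of a centered ellipsoid containing $M$; and the minimum over centered ellipsoids equals the minimum over all ellipsoids because $E_{\min}(M)$ is centered. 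This gives the claim.

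Granting the claim, I would conclude as follows. Applying it with $\mathcal E=E$ and $M=L$: as $L\subset E$ forces $\vol(E)\ge\vol(E_{\min}(L))$, the displayed maximum equals $1$ iff $\vol(E)=\vol(E_{\min}(L))$, i.e.\ (by uniqueness of the minimal ellipsoid) iff $E=E_{\min}(L)$; hence $L$ is in positive John position inside $E$ iff $E=E_{\min}(L)$. Applying it with $\mathcal E=B^n_2$ and $M=P^{-1}L$, and using $E_{\min}(P^{-1}L)=P^{-1}E_{\min}(L)$: $B^n_2$ is in positive John position with respect to $P^{-1}L$ iff $B^n_2=P^{-1}E_{\min}(L)$, i.e.\ iff $E_{\min}(L)=PB^n_2=E$. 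So both conditions are equivalent to $E=E_{\min}(L)$, which is the claimed equivalence; and ``$B^n_2=E_{\min}(P^{-1}L)$'' is by definition the statement that $P^{-1}L$ is in L\"owner position. For the final sentence, the positive John image of $UL$ inside $E$ has volume $\det Q^*(U)\cdot\vol(L)=\big(\vol(E)/\vol(E_{\min}(UL))\big)\,\vol(L)$ by the claim, and $\vol(E_{\min}(UL))=\vol(UE_{\min}(L))=\vol(E_{\min}(L))$ since $U$ is orthogonal, so this volume does not depend on $U$.

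The only step requiring genuine input is the surjectivity of $Q\mapsto Q^{-1}\mathcal E$, i.e.\ that a centered ellipsoid can be mapped onto another by a positive-definite matrix and not merely by a general element of $GL_n$; but this is precisely the matrix-equation solution already recorded in Section \ref{sec:Prelim-and-LinAlg}, so no new ideas are needed, and everything else is routine bookkeeping together with the classical uniqueness of the minimal-volume ellipsoid.
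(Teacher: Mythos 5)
Your proof is correct, but it takes a genuinely different route from the paper's. The paper argues through the contact-pair characterization: it invokes Theorem \ref{pos_john_contact} to produce contact pairs $(x_i,y_i)=(Pu_i,P^{-1}u_i)$ with $\sum c_i(x_i\otimes y_i)_{sym}=I_n$, shows by a trace-and-eigenvalue argument that $\sum c_i\, u_i\otimes u_i=I_n$, and then appeals to the classical John--Ball theorem to conclude that $P^{-1}L$ is in L\"owner position; the volume statement follows because all L\"owner positions of a body are orthogonal images of one another. You instead bypass contact pairs entirely: after reducing to centered images via central symmetry, you identify the optimization $\max\{\det Q:\ Q\in\mathcal P^n,\ QM\subset\mathcal E\}$ with the minimal-volume-ellipsoid problem through the bijection $Q\mapsto Q^{-1}\mathcal E$ from $\mathcal P^n$ onto centered ellipsoids, whose only nontrivial ingredient --- the unique positive solvability of $RSR=T$ --- is exactly Equation \eqref{gen_sq_root} from Lemma \ref{gen_pol_decomp}. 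Your route buys an explicit formula $\det Q^*(U)=\vol(E)/\vol(E_{\min}(UL))$, which makes the ``in particular'' immediate and handles both directions of the equivalence symmetrically (the paper's written proof spells out only the forward implication in detail); what it costs is reliance on the existence, uniqueness and affine equivariance of the L\"owner ellipsoid rather than on the decomposition-of-identity machinery, and it does not exhibit the isotropic measure on contact points that the paper's argument produces along the way.
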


\begin{proof}First, note that the polar body of $E$ is $P^{-1} B^n_2$, and for any $x \in \partial E$, which can be expressed as $Pu$ for $u \in S^{n - 1}$, the unique point $y \in \partial E^\circ$ such that $\langle y, x\rangle = 1$ is given by $y = P^{-1} u$.

Suppose that $E$ is in positive John position with respect to $L$. By Theorem \ref{pos_john_contact}, there exist contact pairs $(x_i, y_i) \in (\partial E \cap \partial L) \times (\partial E^\circ \times \partial L^\circ)$ such that $\sum_{i = 1}^m c_i (x_i \otimes y_i)_{sym} = I_n$. Let $x_i = P u_i$, $y_i = P^{-1} u_i$ for $u_i \in S^{n - 1}$; then
\begin{equation}
I = \sum_{i = 1}^m c_i (x_i \otimes y_i)_{sym} = \sum_{i = 1}^m c_i (Pu_i \otimes P^{-1} u_i)_{sym} = \left(P \left(\sum_{i = 1}^m c_i (u_i \otimes u_i)\right) P^{-1} \right)_{sym}
\end{equation}
We claim that $M = \sum_{i = 1}^m c_i (u_i \otimes u_i)$ is itself equal to $I$. Indeed, $\Tr M = n$, so if the positive-definite symmetric matrix $M$ does not equal $I$ then it has an eigenvector with eigenvalue $\lambda > 1$, hence so does $P M P^{-1}$; letting $v$ be such an eigenvector, we have
\begin{equation}\langle v, v\rangle = \langle v, (P M P^{-1})_{sym}) v\rangle = \langle v, P M P^{-1} v\rangle = \lambda \langle v, v\rangle,
\end{equation}
contradiction. Hence $M = I$.

It remains only to note that if $(x_i, y_i) = (P u_i, P^{-1} u_i)$ are contact points of $E = P B^n_2$ and $L$, then $(u_i, u_i)$ are contact points of $B^n_2$ and $P^{-1} L$; since $\sum_{i = 1}^m c_i (u_i \otimes u_i) = I_n$, the standard John's theorem yields that $P^{-1} L$ is in L\"owner position, as desired.

Thus, the positive John family of a body $L$ inside $E$ may be described as $\{P U L': U \in O_n\}$ for any L\"owner position $L'$ of $L$. The last statement follows immediately by noting that all the L\"owner positions of a body are related by orthogonal transformations, and in particular have the same volume.
\end{proof}
 
This proposition enables us to give an explicit formula for $P^*(U)$ given $P_0 = P^*(I)$:

\begin{corollary}\label{john_ellips_char} Under the same hypotheses, let $P_0 L$ be the positive John image of $L$ inside $E$. Then for any $U \in O_n$, $P^*(U)$ is the unique positive matrix for which there exists $V \in O_n$ such that $P^*(U) P^{-1} V = U P_0 P^{-1}$.
\begin{proof}If $P^*(U) U L$ is the positive John position of $UL$ inside $E$, by the proposition, $P^{-1} P^*(U) U L$ and $P^{-1} P_0 L$ are L\"owner positions of $L$, so $P^{-1} P^*(U) U L = V P^{-1} P_0 L$ for some $V \in O_n$. This implies that
\begin{equation}\label{john_ellips_formula}
P^{-1} P^*(U) U = V P^{-1} P_0 A
\end{equation}
for some $A$ in the symmetry group of $L$. The symmetry group of a convex body must preserve the minimal-volume ellipsoid containing the body; as $L' = P^{-1} P_0 L$ is in L\"owner position, its symmetry group must be contained in $O_n$. In addition, for any set $S$ and $M \in GL_n$, $\Aut(MS) = M \Aut(S) M^{-1}$, so putting these facts together we get $A = P_0^{-1} P W P^{-1} P_0$ for some $W \in O_n$. Substituting in Equation \eqref{john_ellips_formula}, we see that $P^*(U) U L$ is a positive John position of $UL$ inside $E$ if and only $P^{-1} P^*(U) U = V W P^{-1} P_0$ for some $V \in O_n$, $W \in \Aut(L')$, and since $\Aut(L') \subset O_n$ we may absorb $W$ into $V$. Rearranging yields $P^*(U) P^{-1} V = U^T P_0 P^{-1}$, an equation of the form treated in Lemma \ref{gen_pol_decomp}, for which we know there exists a unique solution, explicitly given by Equation \eqref{gen_sq_root} (though the ensuing formula is not very enlightening).
\end{proof}
\end{corollary}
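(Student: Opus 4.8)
The plan is to reduce the whole statement to the generalized polar decomposition of Lemma \ref{gen_pol_decomp}. Abbreviate $P_0 = P^*(I)$. Since $L$ (hence every $UL$) and $E$ are centrally symmetric, all the positive John images in sight are centered, so translations play no role and I may work purely with matrices. The crucial input is Proposition \ref{prop-PJPin-ellipsoid}, or rather the description of the positive John family obtained in the course of its proof: for every $U \in O_n$ the body $P^{-1}P^*(U)\,UL$ is a L\"owner position of $L$ (the positive John image of $UL$ in $E = PB^n_2$ is exactly $P$ times a L\"owner position of $UL$, and a L\"owner position of $UL$ is the same thing as a L\"owner position of $L$). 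In particular $P^{-1}P^*(U)\,UL$ and $P^{-1}P_0 L$ are two L\"owner positions of $L$; since any two L\"owner positions of a body differ by an orthogonal transformation, there is $V \in O_n$ with $P^{-1}P^*(U)\,UL = V\,(P^{-1}P_0 L)$.

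Next I would pass from this equality of \emph{bodies} to an equality of \emph{matrices}. Because $L$ may have nontrivial affine symmetries, this only gives $P^{-1}P^*(U)\,U = V P^{-1}P_0\, A$ for some $A \in \Aut(L)$. Here lies the one genuinely delicate point: $\Aut(L)$ need not consist of orthogonal maps, since $L$ itself is not assumed to be in L\"owner position. But the body $L' := P^{-1}P_0 L$ \emph{is} in L\"owner position, and the affine symmetry group of any convex body preserves its circumscribed (L\"owner) ellipsoid, so $\Aut(L') \subseteq O_n$; combined with $\Aut(L') = (P^{-1}P_0)\,\Aut(L)\,(P^{-1}P_0)^{-1}$, this forces every $A \in \Aut(L)$ to have the form $A = P_0^{-1}P\,W\,P^{-1}P_0$ with $W \in \Aut(L') \subseteq O_n$. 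Substituting this and observing that the factors $P^{-1}P_0$ and $P_0^{-1}P$ cancel, the entire symmetry ambiguity collapses into a single orthogonal matrix: $P^{-1}P^*(U)\,U = V'\, P^{-1}P_0$ with $V' = VW \in O_n$.

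Finally I would rearrange this into the advertised form and quote Lemma \ref{gen_pol_decomp}. Taking transposes of $P^{-1}P^*(U)\,U = V'\,P^{-1}P_0$ (legitimate because $P$, $P_0$, and $P^*(U)$ are all symmetric) and rearranging yields $P^*(U)\,P^{-1}\,V' = U P_0 P^{-1}$ with $V' \in O_n$. This is precisely an equation of the shape $f(Q, V') = Q M V'$ treated in Lemma \ref{gen_pol_decomp}, with the fixed matrix $M = P^{-1}$ and the fixed right-hand side $U P_0 P^{-1} \in GL_n$; that lemma says $(Q, V') \mapsto Q M V'$ is a global diffeomorphism $\mathcal P^n \times O_n \to GL_n$, so there is a unique positive $Q$ (and a unique matching orthogonal $V'$) solving it, with $Q$ given explicitly by \eqref{gen_sq_root}. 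Since $P^*(U)$ has just been exhibited as such a $Q$, it must be this unique one, which is the assertion.

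I expect the only real obstacle to be the symmetry-group bookkeeping in the second paragraph: one must verify that the indeterminacy introduced by $\Aut(L)$ really does collapse to a single orthogonal factor, rather than enlarging the set of candidate positive matrices — otherwise the uniqueness claimed in the statement would be lost. Everything else is routine: the passage between bodies and matrices, the transpose manipulation, and the recognition of the final identity as an instance of Lemma \ref{gen_pol_decomp}.
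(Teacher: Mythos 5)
Your proposal is correct and follows essentially the same route as the paper's proof: identify $P^{-1}P^*(U)UL$ and $P^{-1}P_0L$ as L\"owner positions of $L$, collapse the $\Aut(L)$ ambiguity using $\Aut(L')\subset O_n$ and the conjugation formula, and invoke Lemma \ref{gen_pol_decomp} for uniqueness. Your transpose computation in fact lands on $UP_0P^{-1}$, matching the corollary's statement (the paper's proof writes $U^TP_0P^{-1}$ at the corresponding step, which appears to be a typo).
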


\begin{remark}
It is interesting to note that Lemma \ref{gen_pol_decomp} is not strictly necessary for the proof of Corollary \ref{john_ellips_char}: indeed, since the existence and uniqueness of $P^*(U)$ follow from earlier results, it is only necessary to show that $P^*(U) UL$ is a positive John position if and only if there exists $V \in O_n$ such that $P^*(U) P^{-1} V = U P_0 P^{-1}$. Existence and uniqueness of the matrix $P^*(U)$ satisfying this condition -- from which it is trivial to derive the full statement of Lemma \ref{gen_pol_decomp} -- are thus obtained as a corollary of the existence and uniqueness of positive John position.
\end{remark}

We conclude the section with the following conjecture, which states that the behavior exhibited by ellipsoids with regard to positive John position is unique to that class:

\begin{conjecture}Suppose that $K, L$ are convex bodies, neither of which is an ellipsoid. Then the positive John family $(P^*, z^*)$ associated to $K, L$ does not satisfy $\det P^* = C$ identically.
\end{conjecture}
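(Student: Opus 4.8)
The plan is to establish the contrapositive: assuming the positive John family of $K,L$ satisfies $\det P^*\equiv C$, one deduces that $K$ or $L$ is an ellipsoid. After replacing $L$ by a scalar multiple we may take $C=1$, so that for every $U\in O_n$ the body $P^*(U)UL+z^*(U)$ has volume exactly $\vol(L)$ and is therefore a maximal-volume image of $L$ inside $K$. The plan is to first carry out the argument under the regularity assumptions of Theorem \ref{pos_john_sm} (say $K$ is $C^k_+$ for large $k$ and $L$ a polytope satisfying the uniqueness hypothesis of that theorem for a.e.\ $U$, or the dual situation), and only afterwards address the general case. One must be wary here: ``$\det P^*$ non-constant'' is not a closed condition, so a naive approximation argument fails, and one would instead have to replay the argument with a set-valued/subdifferential version of Theorem \ref{pos_john_sm} (in the style of the set-valued analysis of \S\ref{subsection:set_valued}) in order to handle non-smooth pairs.

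The first substantive step is to exploit criticality. If $\det P^*$ is constant, then every $U\in O_n$ is a critical point of $U\mapsto\det P^*(U)$, so applying the envelope theorem exactly as in the proof of Theorem \ref{saddle_john_contact} yields, at every $U$ where the contact pairs of $K$ and $P^*(U)UL+z^*(U)$ are uniquely determined, a \emph{genuine} decomposition of the identity $\sum_i c_i\,x_i\otimes y_i=I_n$, not merely its symmetric part. In other words, every member of the positive John family of $L$ in $K$ is simultaneously in a position of maximal volume; in particular the maximal-volume position and the saddle-John position of $L$ in $K$ coincide.

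The hard part is the rigidity step: turning ``every orthogonal image of $L$ is a maximal-volume position inside $K$'' into the statement that one of $K,L$ is an ellipsoid. Consider the map $M\colon O_n\to SL_n$, $M(U)=P^*(U)U$; by uniqueness of the positive John image (Corollary \ref{pos_aff_sym}) it is injective modulo the compact group of linear symmetries of $L$, so its image is a $\binom{n}{2}$-dimensional family of pairwise genuinely distinct maximal-volume positions of $L$ in $K$. For each $U$ one has finitely many contact points $M(U)x_i(U)\in\partial K$ with supporting functionals $M(U)^{-T}y_i(U)\in\partial K^\circ$, subject to the identity relations of the previous step. The crux is to show that the contact points, swept out over all $U$, force a relatively open piece of $\partial K$ (or dually of $\partial L$) to coincide with a piece of an ellipsoid, and then to propagate this local roundness over all of $\partial K$ by connectedness. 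This is expected to be the genuine obstacle. Two routes suggest themselves: differentiating the contact relations along the curves $t\mapsto M(Ue^{tA})$, $A$ antisymmetric, to extract a rigid second-order condition on the relevant boundary piece — mimicking, in reverse, the computation $\sum c_i u_i\otimes u_i=I_n$ from the proof of Proposition \ref{prop-PJPin-ellipsoid}; or establishing, as a lemma of independent interest, a classification of convex bodies admitting a $\binom{n}{2}$-dimensional family of affine maximal-volume positions inside a fixed body, in the spirit of the characterization of the ellipsoid by the richness of its affine automorphism group.

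For the base and inductive cases, the plan is to first settle $n=2$, where $M\colon SO_2\to SL_2$ is a single curve and the two- or three-contact-pair structure of a maximal-area position is explicit enough to analyze directly; and for the passage to higher dimensions, the polarity $PL\subset K\iff PK^\circ\subset L^\circ$ (which shows that the positive John images of $L$ in $K$ and of $K^\circ$ in $L^\circ$ are given by the same positive matrix, in the centrally symmetric case) should let one interchange the roles of the two bodies, while restricting to suitable sections or projections of $K$ should reduce matters to a lower-dimensional instance of the conjecture. In all of this, the rigidity step — extracting genuine rigidity from the abundance of maximal-volume positions — is the one that does not seem reachable with the tools developed in the present paper alone.
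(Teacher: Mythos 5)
You should be aware at the outset that the statement you are attacking is stated in the paper as a \emph{conjecture}: the authors do not prove it, and only verify the special case $K=L$, by observing that $\det P^*(U)\le 1$ with equality exactly when $P^*(U)U$ lies in $\Aut(K)$, that a non-ellipsoid has symmetry group of dimension strictly smaller than $\dim O_n$, and that the polar decomposition being a diffeomorphism then forces the equality set to be lower-dimensional in $O_n$. Your proposal does not close the problem either, and you say so yourself: the ``rigidity step'' --- passing from ``every orthogonal image of $L$ admits a maximal-volume position inside $K$'' to ``$K$ or $L$ is an ellipsoid'' --- \emph{is} the content of the conjecture, and it is precisely the step you leave open. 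Moreover, everything preceding it is essentially a restatement of the hypothesis, and is obtainable more cheaply than you obtain it: once $\det P^*\equiv C$, every member of the positive John family realizes the maximal volume over all affine images, so Theorem \ref{gen_john_thm} already hands you a genuine, unsymmetrized decomposition of the identity at every $U$, with no need for the envelope-theorem computation of Theorem \ref{saddle_john_contact} or the regularity and uniqueness hypotheses of Theorem \ref{pos_john_sm} that you invoke. So the preparatory steps are correct but do not advance toward the conclusion, and the substantive step is absent.

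Two further cautions about the devices you propose for the missing step. Polarity merely interchanges the roles of $K$ and $L$ (and of John-type with L\"owner-type containment); it cannot convert the two-body problem into the one-body problem $K=L$ that the paper does settle. And sections or projections of a pair in maximal-volume position need not be in maximal-volume position in the lower-dimensional ambient space, so the proposed induction on dimension has no evident mechanism for the inductive step; even the $n=2$ base case is not actually carried out. If your goal is only to reproduce what the paper establishes, the efficient route is the authors' symmetry-group dimension count, not a contact-pair analysis. Your observation that constancy of $\det P^*$ forces a genuine decomposition of the identity at every $U$ is a reasonable starting point for a future rigidity argument, but as written this is a research programme with its central lemma unproved, not a proof.
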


We can confirm this conjecture in the case $K = L$. Indeed, in this case we clearly have $\det P^*(U) \le 1$ for all $K$, with equality if and only if $P^*(U) U K + z^*(U) = K$, i.e., $x \mapsto P^*(U) U x + z^*(U)$ is a symmetry of $K$; by taking $K$ to be centered, we may assume $z^*(U) = 0$ for all $U$. If $K$ is not an ellipsoid then its symmetry group has dimension smaller than that of $O_n$, so the set of $U$ such that there exists $P$ for which $PU \in \mathrm{Aut}(K)$ is also lower-dimensional, since the polar decomposition is a diffeomorphism.

\section{Maximal intersection position}\label{section:max_int_pos}
Another way to generalize the John position, introduced by Artstein-Avidan and Katzin \cite{AK}, is to consider affine images of the ball not necessarily contained in $K$. They studied the following question: given a convex body $K$ and a prescribed volume $V$, what can be said about the ellipsoid maximizing $\mathcal E \cap K$ over all ellipsoids with volume $V$? If $V$ is taken to be the volume of the John ellipsoid of $K$, then the maximizing ellipsoid is the John ellipsoid, and similarly for the L\"owner ellipsoid; but of course one does not know which choices of $V$ will yield the John or L\"owner ellipsoid of $K$ unless one already has these ellipsoids in hand.

Artstein-Avidan and Katzin considered this question for symmetric convex bodies $K$. They showed the existence of an ellipsoid $\mathcal E$ maximizing the volume $\vol(\mathcal E \cap K)$ over all ellipsoids of given volume, and defined $K$ to be in maximal intersection position of radius $r$ if this ellipsoid is the ball $r B^n_2$. Their main result was the following:

\begin{theorem}\label{max_ak} Let $K\subset\mathbb R^n$ be a centrally symmetric convex body such that:
\begin{enumerate}
  \item $\mathcal H^{n - 1}(\partial K \cap \partial\mathcal E) = 0$ for all but finitely many ellipsoids $\mathcal E$,
  \item $\mathcal H^{n - 1}(\partial K \cap r S^{n - 1}) = 0$, and
  \item $\mathcal H^{n - 1}(K \cap r S^{n-1}) > 0$.
\end{enumerate}
  If $K$ is in maximal intersection position of radius $r$, then the restriction $\mu$ of the surface area measure on the sphere to $S^{n - 1} \cap r^{-1} K$ is isotropic.
\end{theorem}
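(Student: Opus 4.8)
The plan is to run a first-order variational argument at the maximizer $rB^n_2$, parallel to John's original argument but with the objective functional $\mathcal E \mapsto \vol(\mathcal E \cap K)$ in place of $\vol(\mathcal E)$. For any symmetric $S$ with $\Tr S = 0$, the curve $t \mapsto E_t := e^{tS}(rB^n_2)$ consists of ellipsoids of volume $\vol(rB^n_2)$, since $\det e^{tS} = e^{t\Tr S} = 1$; thus each $E_t$ is an admissible competitor, and because $K$ is in maximal intersection position of radius $r$, the function $g_S(t) := \vol(E_t \cap K)$ attains its maximum at $t = 0$. Hence, once we know that $g_S$ is differentiable at $0$, we get $g_S'(0) = 0$ for every such $S$. (Only the symmetric part of a matrix affects $e^{tS}(B^n_2)$, and translating the ellipsoid is irrelevant because $K$ and $rB^n_2$ are centrally symmetric, so these curves probe all perturbation directions that matter.)

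To compute $g_S'(0)$ I would use that both $E_t$ and $K$ are star-shaped about the origin: writing $\rho_t$ for the radial function of $E_t$ and $r_K$ for that of $K$, and letting $\sigma$ denote the Hausdorff measure on $S^{n-1}$, one has
\begin{equation}
g_S(t) = \frac{1}{n}\int_{S^{n-1}} \min\bigl(\rho_t(u), r_K(u)\bigr)^n\, d\sigma(u).
\end{equation}
Here $\rho_0 \equiv r$, and a short computation gives $\rho_t(u) = r / |e^{-tS}u|$, whence $\left.\frac{d}{dt}\right|_{t=0}\rho_t(u) = r\langle Su, u\rangle$. For $u$ with $r_K(u) > r$, i.e.\ $ru \in \intr K$, the minimum equals $\rho_t(u)$ for $t$ near $0$, so the $t$-derivative of the integrand at $0$ is $n r^n \langle Su, u\rangle$; for $u$ with $r_K(u) < r$ the minimum is the constant $r_K(u)$ and the derivative is $0$; and the set $\{u : r_K(u) = r\}$ is bi-Lipschitz equivalent to $\partial K \cap rS^{n-1}$, which by hypothesis (2) is $\sigma$-null. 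Since $\rho_t(u)$ is jointly smooth and bounded above and below away from $0$, uniformly in $u$, the difference quotients of the integrand are uniformly bounded and dominated convergence applies, giving $g_S'(0) = r^n \langle S, M\rangle$, where
\begin{equation}
M := \int_{S^{n-1}\cap r^{-1}K} u\otimes u \, d\sigma(u),
\end{equation}
and we used $\langle S, u\otimes u\rangle = \langle Su, u\rangle$.

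It remains to deduce the isotropy of $\mu$, the restriction of $\sigma$ to $S^{n-1}\cap r^{-1}K$; this is exactly the assertion $M \propto I_n$. The matrix $M$ is symmetric and, by the above, $\langle S, M\rangle = 0$ for every symmetric $S$ with $\Tr S = 0$; since the orthogonal complement of the trace-zero symmetric matrices inside $\Sym^{n\times n}$ is $\R\, I_n$, we conclude $M = \lambda I_n$. Finally $n\lambda = \Tr M = \sigma(S^{n-1}\cap r^{-1}K) = r^{1-n}\,\mathcal H^{n-1}(K \cap rS^{n-1}) > 0$ by hypothesis (3), so $M$ is a positive multiple of $I_n$; together with the fact that $\int u\,d\mu(u) = 0$ automatically by central symmetry, this says $\mu$ is isotropic. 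The step I expect to be the main obstacle is the rigorous justification that $g_S$ is differentiable at $0$ with the stated derivative: controlling the interface $\partial K \cap rS^{n-1}$ and exchanging $\frac{d}{dt}$ with the integral. The star-shaped representation makes this fairly clean, but it leans essentially on hypothesis (2); hypothesis (1) is not needed for the first-order computation itself but is what guarantees (with a compactness argument) that a maximizing ellipsoid exists, so that "maximal intersection position" is well-defined, and for that part I would simply refer to \cite{AK}. An alternative closer to the original argument is to first prove the identity for a dense class of bodies whose boundaries meet spheres and ellipsoids in null sets (polytopes or $C^2_+$ bodies), where everything is transparent, and then pass to the limit using continuity of $K \mapsto \vol(\mathcal E \cap K)$ and of the maximal intersection position — the approximation route this paper is designed to circumvent.
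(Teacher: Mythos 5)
Your argument is correct and is essentially the route this paper takes: the statement is the special case $L = rB^n_2$ of Theorem \ref{gen_max_int_pos}, whose first proof uses exactly your radial-function representation $\vol(K\cap e^{tA}L)=\frac{1}{n}\int_{S^{n-1}}\min\bigl(r_K^n, r_{e^{tA}L}^n\bigr)\,d\sigma$ together with dominated convergence, with hypothesis (2) ensuring that the non-differentiability set $\{r_K=\rho_0\}$ is $\sigma$-null and hypothesis (3) ensuring the resulting multiple of the identity is positive. As you anticipated, hypothesis (1) is not needed for the first-order computation (the paper makes the same observation in noting that its theorem strengthens the cited one); the only small correction is that existence of a maximizing ellipsoid follows from a direct compactness argument and does not require hypothesis (1) either.
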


In the following, we give a generalization of maximal intersection position along the lines of the generalization of John position to the position of maximal volume. The generalization is twofold: firstly, we allow not-necessarily centrally symmetric bodies, and in addition, we consider general pairs of convex bodies, not just a convex body and a ball:

\begin{definition}[Maximal intersection position] Let $K, L \subset \mathbb R^n$ be convex bodies. We say that $K, L$ are in maximal intersection position if for every $A \in SL_n, z \in \mathbb R^n$ and $L' = AL + z$, we have $\vol(K \cap L') \le \vol(K \cap L)$; clearly, this definition is symmetric with respect to an interchange of $K$ and $L$.
\end{definition}

We first show that for given $K, L$, the maximal intersection position exists, i.e., there exists an image $L'$ of $L$ with $\vol(L') = \vol(L)$ satisfying
\begin{equation}\label{max_int_cond}
\vol(L' \cap K) = \sup \{\vol((AL + z) \cap K)): A \in SL_n, z \in \mathbb R^n\}
\end{equation}
Write $m_{K, L}$ for the RHS of \eqref{max_int_cond}, and consider a sequence of convex bodies $L_j = A_j L + z_j$ with ${A_j \in SL_n}$ and $\vol(L'_j \cap K) \to m_{K, L}$. First, suppose for the sake of contradiction that some coordinate of $A_j$ goes to $\infty$; we have
\begin{equation}
\vol(L_j \cap K) \le \vol((A_j RB^n_2 + z_j) \cap RB^n_2)
\end{equation}
where $R > 0$ is sufficiently large so that $RB^n_2$ contains both $L$ and $K$. We use the singular value decomposition to write $A_j = U_j \Sigma_j V_j$ for $\Sigma_j$ diagonal with positive, decreasing entries on the diagonal and $U_j, V_j$ orthogonal. The maximal entry of $\Sigma_j$ must go to $\infty$, as the orthogonal group $O_n$ and the set of diagonal matrices with entries bounded by $c$ are compact. We have
\begin{equation}
\vol((U_j \Sigma_j V_j RB^n_2 + z_j) \cap RB^n_2) = \vol((\Sigma_j RB^n_2 + z_j') \cap RB^n_2)
\end{equation}
where $z_j' = U_j^L z_j V_j^L$. Since $(\Sigma_j)_{nn} \to 0$ as $j\to \infty$ and since $\vol((\Sigma_j RB^n_2 + z_j') \cap RB^n_2)$ is bounded by the maximal volume of a slice of width $(\Sigma_j)_{nn}$ of $RB^n_2$, which goes to $0$, so we must have that as $j\to \infty$
\begin{equation}
\vol(L_j \cap K) \le \vol((\Sigma_j RB^n_2 + z_j') \cap RB^n_2) \to 0,
\end{equation}
a contradiction.

Thus the coordinates of the matrices $A_j$ are bounded, so  $A_j L$ are all contained within a given compact set, and hence the $z_j$ also must be bounded, as otherwise $A_j L + z_j$ will not intersect $K$ at all for large enough $j$. By compactness, we obtain a subsequence $(A_{j_k}, z_{j_k})$ of the $(A_j, z_j)$ converging to $(A, z)$, which give the desired maximizer $AL + z$.

Hence, for any $K, L$, there exists an affine image $L'$ of $L$ such that $K, L'$ are in maximal intersection position. Our main result, like the general John's theorem (Theorem \ref{gen_john_thm}), gives a decomposition of the identity associated to this position:

\begin{manualtheorem}{\ref{gen_max_int_pos}} Let $K, L \subset \mathbb R^n$ be convex bodies, and suppose that $K, L$ are in maximal intersection position and that $\vol_{n-1}(\partial K\cap\partial L) = 0$. For any $x \in \partial L$, let $\hat n_L(x)$ be the unit normal at $x$, which is defined $\mathcal H^{n - 1}$-almost everywhere on $\partial L$. Then we have
\begin{align}
\int_{K \cap \partial L} \hat n_L(x)\,d\mathcal H^{n - 1}           &= 0          \label{mint_pos_eq1}, \\
\int_{K \cap \partial L} x \otimes \hat n_L(x)\,d\mathcal H^{n - 1} &\propto I_n  \label{mint_pos_eq2}.
\end{align}
The same formulae hold when interchanging the roles of $K, L$.
\end{manualtheorem}

Along with generalizing Theorem \ref{max_ak}, this theorem also strengthens it: it is unnecessary to assume $\vol_{n-1}(\partial K\cap\partial L') = 0$ for all but finitely many affine images $L'$ of $L$.

The strategy of proof is to consider $f(t) = \vol(K \cap L_t)$ for a one-parameter family $L_t$ of affine images of $L$ with $L_0 = L$, and show that $f$ is differentiable at $0$ and $f'(0) = 0$. In order to show differentiability, the proof of Theorem \ref{max_ak} in \cite{AK} used explicit approximations of the indicator functions of the body $K$ and of the ball, $1_K, 1_{B^n_2}$, by smooth functions $\psi_k, \varphi_k$. We shall give two proofs: the first avoids the need for any approximation procedure, but treats translations and linear transformations separately; the second requires approximation (though not by explicitly-constructed functions), and also has the restrictive assumption of Theorem \ref{max_ak}, but it treats a much more general family of transformations (see Theorem \ref{diff_int} for details).

\subsection{A hand-waving argument}
Before we proceed to rigorous proofs, we give a hand-waving argument for Theorem \ref{gen_max_int_pos} based loosely on the theory of distributions, which shows why we should, intuitively, expect the theorem to be true.

First, suppose $K, L$ are in maximal intersection position with respect to translations. For any $x \in \mathbb R^n$, $\vol(K \cap (x + L)) = \int 1_K(y) 1_L(y - x)\,dy$, so non-rigorously, we may write
\begin{equation}
\nabla_x \vol(K \cap (x + L)) = \int 1_K(y) (-\nabla 1_L)(y - x)\,dy.
\end{equation}
But by the vector calculus identity
\begin{equation}
\int_L \nabla f\,dx = \int_{\partial L} f\hat n_L\,d\mathcal H^{n - 1},
\end{equation}
we have that $(-\nabla 1_L)$, considered as a distribution, is just the normal vector to $\partial L$ times a one-dimensional delta function supported on $\partial L$, so we obtain
\begin{equation}\left.\nabla_x \vol(K \cap (x + L))\right|_{x = 0} = \int 1_K(y) \hat n_L(y) \delta_{\partial L}(y) \,dy = \int_{K \cap \partial L} \hat n_L(y)\,dy.
\end{equation}
As $x = 0$ is the maximizer of $\vol(K \cap (x + L))$, the gradient vanishes.

This isn't a proof, of course, because distributions can only be integrated (a priori) against smooth test functions, not against functions like $1_K$. However, one can hope that if the discontinuity of $1_K$ is ``transverse'' to the discontinuity of the delta function supported on $\partial L$ then the results of the computation can be shown to be valid by some approximation procedure. It seems intuitively clear that if $\vol_{n - 1}(\partial K \cap \partial L) > 0$, this procedure has no chance of working, and indeed the theorem does not hold in this case.

A similar ``argument'' can be used to justify the formula for maximal intersection with respect to volume-preserving linear transformations. A local perturbation of the identity within $SL_n$ looks like $A(t) = e^{t M}$ for some matrix $M$ with $\Tr\,M = 0$ (i.e., the Lie algebra of $SL_n$ is the space of traceless matrices), so we have
\begin{align}\left.\frac{d}{dt}\right|_{t = 0}\vol(K \cap A(t) L) &= \int 1_K(x) \left.\frac{d}{dt}\right|_{t = 0} 1_L(e^{-t M} x)\,dx  \nonumber \\
 &= \int 1_K(x) \nabla 1_L \cdot \left.\frac{d}{dt}\right|_{t = 0} (e^{-tM} x)\,dx  \nonumber \\
 &= \int 1_K(x) \left(-\delta_{\partial L}(x) \hat n_L(x) \cdot (-Mx)\right) \nonumber \\
 &= \int_{K \cap \partial L} \langle \hat n_L(x), Mx\rangle\,dx.
\end{align}

Again, if we believe this non-rigorous calculation, we obtain that for $K, L$ in maximal intersection position, $\int_{K \cap \partial L} \langle \hat n_L(x), Mx\rangle\,dx = 0$, or in other words that $\int_{K \cap \partial L} \hat n_L(x) \otimes x\,dx$ is Hilbert-Schmidt orthogonal to $M$. It is not hard to see that a matrix $A$ is Hilbert-Schmidt orthogonal to all traceless matrices if and only if it is proportional to the identity, so we get that $\int_{K \cap \partial L} \hat n_L(x) \otimes x\,dx \propto I_n$, as desired.

We now proceed to the actual proofs.

\subsection{First proof of Theorem \ref{gen_max_int_pos}}

Theorem \ref{gen_max_int_pos} is a consequence of the following two differentiation formulae:

\begin{theorem}\label{thm:derivative-of-volume-translate}Let $K, L\subset\mathbb R^n$ be convex bodies. Let $u\in S^{n - 1}$ and let $V(t):\mathbb{R}\rightarrow\mathbb{R}$ be defined by $V(t)=\vol_{n}(K\cap (L + tu))$. If $\vol_{n-1}(\partial K\cap\partial L) = 0$, then
\begin{equation}\label{trans_der}
\left.\frac{dV(t)}{dt}\right|_{t=0}=\int_{\partial L\cap K}\left\langle \hat n_L(x), u\right\rangle d\sigma_L(x)
\end{equation}
where $\hat n_L(x) \,d\sigma_L(x)$ is the vector surface area measure on $\partial L$.
\end{theorem}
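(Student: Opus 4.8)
The plan is to slice $\mathbb R^n$ perpendicular to $u$, carry out the differentiation fibrewise in one dimension, and then repackage the answer as a surface integral over $\partial L$ via the area formula for the orthogonal projection $\pi_u:\mathbb R^n\to u^\perp$. Assume without loss of generality $u=e_n$, and write points as $(w,s)$ with $w\in u^\perp\cong\mathbb R^{n-1}$. For fixed $w$ the slices $\{s:(w,s)\in K\}$ and $\{s:(w,s)\in L\}$ are (possibly empty) intervals $[\alpha_K(w),\beta_K(w)]$, $[\alpha_L(w),\beta_L(w)]$, and the slice of $L+tu$ is $[\alpha_L(w)+t,\beta_L(w)+t]$; Fubini gives
\begin{equation}
V(t)=\int_{u^\perp}\ell(w,t)\,dw,\qquad \ell(w,t)=\bigl(\min(\beta_K(w),\beta_L(w)+t)-\max(\alpha_K(w),\alpha_L(w)+t)\bigr)_+.
\end{equation}
For each $w$, $t\mapsto\ell(w,t)$ is $1$-Lipschitz (a composition of $1$-Lipschitz maps) and vanishes for $w\notin\pi_u(K)$, a set of finite measure; hence, once $\partial_t\ell(w,0)$ is shown to exist for a.e.\ $w$, dominated convergence yields $V'(0)=\int_{u^\perp}\partial_t\ell(w,0)\,dw$.

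\textbf{The fibrewise derivative.} Next I would compute $\partial_t\ell(w,0)$, and this is where the hypothesis enters. Since $\pi_u$ is $1$-Lipschitz it sends $\mathcal H^{n-1}$-null sets to Lebesgue-null subsets of $u^\perp$, so $\mathcal H^{n-1}(\partial K\cap\partial L)=0$ forces $\pi_u(\partial K\cap\partial L)$ to be null; consequently, for a.e.\ $w$ neither endpoint $(w,\alpha_L(w))$, $(w,\beta_L(w))$ of the $L$-slice lies on $\partial K$. For such a $w$ there are no coincidences among $\alpha_K,\beta_K,\alpha_L,\beta_L$, and a short inspection of which of $\min(\beta_K,\beta_L+t)$, $\max(\alpha_K,\alpha_L+t)$ varies to first order at $t=0$ gives
\begin{equation}
\partial_t\ell(w,0)=1_{\intr K}(w,\beta_L(w))-1_{\intr K}(w,\alpha_L(w))
\end{equation}
(with the convention that an absent endpoint contributes $0$): the top, resp.\ bottom, endpoint of the slice of $K\cap(L+tu)$ moves at unit rate exactly when it is supplied by $L$, i.e.\ exactly when that endpoint of $L$ lies strictly inside the slice of $K$.

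\textbf{Repackaging as a surface integral.} For a.e.\ $w$ the fibre $\pi_u^{-1}(w)\cap\partial L$ consists precisely of the two points $(w,\alpha_L(w))$, $(w,\beta_L(w))$, at which $\langle\hat n_L,u\rangle$ is respectively negative and positive. Hence, putting $g(x)=\operatorname{sgn}\langle\hat n_L(x),u\rangle\cdot 1_{\intr K}(x)$, one has $\partial_t\ell(w,0)=\sum_{x\in\pi_u^{-1}(w)\cap\partial L}g(x)$ for a.e.\ $w$. Since $\pi_u$ restricted to the Lipschitz hypersurface $\partial L$ has tangential Jacobian $|\langle\hat n_L(x),u\rangle|$, the area formula gives
\begin{equation}
V'(0)=\int_{u^\perp}\Bigl(\sum_{x\in\pi_u^{-1}(w)\cap\partial L}g(x)\Bigr)dw=\int_{\partial L}g(x)\,|\langle\hat n_L(x),u\rangle|\,d\mathcal H^{n-1}(x)=\int_{\partial L\cap\intr K}\langle\hat n_L(x),u\rangle\,d\mathcal H^{n-1}(x),
\end{equation}
and since $\mathcal H^{n-1}(\partial L\cap\partial K)=0$ one may enlarge $\intr K$ to $K$, which is exactly \eqref{trans_der}.

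\textbf{Main obstacle.} The genuinely delicate part is the bookkeeping of the $\mathcal H^{n-1}$-null exceptional sets: the ``ties'' among the slice endpoints, the $w$ for which $\pi_u^{-1}(w)\cap\partial L$ is a segment rather than a pair of points, the part of $\partial L$ on which $\langle\hat n_L,u\rangle=0$, and the non-differentiability set of $\partial L$ — together with the correct use of the fact that Lipschitz maps do not enlarge Hausdorff-null sets. It is precisely here that the assumption $\mathcal H^{n-1}(\partial K\cap\partial L)=0$ is needed and cannot be dropped, as the example of two boxes sharing a facet (for which $V$ has a corner at $t=0$) shows; everything else (Fubini, the Lipschitz bound, dominated convergence, the elementary $t$-derivative of $\ell$) is routine. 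As a sanity check one can alternatively run the computation on $\partial K$ via $V(t)=\vol_n((K-tu)\cap L)$ and a thin-layer estimate near $\partial K$; the two forms of the answer coincide by applying the divergence theorem to the constant field $u$ on $K\cap L$.
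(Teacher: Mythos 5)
Your proof is correct and takes essentially the same route as the paper's: slice along $u$, write $V(t)$ as a Fubini integral of fibrewise interval-intersection lengths, differentiate under the integral using the $1$-Lipschitz bound in $t$ together with the fact that the Lipschitz projection sends the $\mathcal H^{n-1}$-null set $\partial K\cap\partial L$ to a null set of ``ties,'' and then convert the resulting projected measures back into $\int_{\partial L\cap K}\langle \hat n_L,u\rangle\,d\mathcal H^{n-1}$ (you phrase this last step via the area formula with tangential Jacobian $|\langle\hat n_L,u\rangle|$, the paper via the equivalent Cauchy projection identity for the upper and lower parts of $\partial L$). No gaps.
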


\begin{theorem}\label{thm:derivative-of-volume-linear}Let $K, L\subset\mathbb{R}^{n}$ be convex bodies with $0 \in \intr(K \cap L)$. Fix $A\in M_{n}(\mathbb{R})$, and let $V(t): \mathbb R \rightarrow \mathbb R $
be defined by $V(t) = \vol_n(K\cap e^{tA} L)$. If $\vol_{n-1}(\partial K \cap \partial L) = 0$, then
\begin{equation}\label{lin_der}
\left.\frac{dV(t)}{dt}\right|_{t=0}=\int_{\partial L\cap K}\left\langle \hat n_L(x),Ax\right\rangle d\sigma_L(x)
\end{equation}
where $\hat n_L(x) \,d\sigma_L(x)$ is the vector surface area measure on $\partial L$.
\end{theorem}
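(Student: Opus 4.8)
The plan is to reduce the computation of $V'(0)$ to a single differentiation under an integral over $S^{n-1}$, which the hypothesis $0\in\intr(K\cap L)$ makes available. Since $0$ then lies in the interior of $K$, of $L$, and hence of $e^{tA}L$ for every $t$ (as $e^{tA}$ is a homeomorphism fixing $0$), each body $K\cap e^{tA}L$ contains the origin in its interior, so polar coordinates give
\[
V(t)=\frac1n\int_{S^{n-1}}\min\bigl(r_K(\theta),\,r_{e^{tA}L}(\theta)\bigr)^{\,n}\,d\theta,
\qquad\text{where}\quad r_{e^{tA}L}(\theta)=\frac1{g_L(e^{-tA}\theta)},
\]
the second identity being immediate from $r\theta\in e^{tA}L\iff r\,g_L(e^{-tA}\theta)\le 1$. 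It thus suffices to differentiate the integrand at $t=0$ and interchange $\frac{d}{dt}$ with $\int$.

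To justify the interchange I would first bound the difference quotients uniformly: $t\mapsto g_L(e^{-tA}\theta)$ is Lipschitz in $t$ with a constant independent of $\theta\in S^{n-1}$, and $g_L$ is bounded below on a compact neighbourhood of $S^{n-1}$ not containing $0$, so $\bigl|r_{e^{tA}L}(\theta)^{\,n}-r_L(\theta)^{\,n}\bigr|\le C|t|$ uniformly in $\theta$, whence the difference quotients of $\min(r_K,r_{e^{tA}L})^{\,n}$ are bounded by $C$ on the finite-measure space $S^{n-1}$. For the pointwise derivative, note that on the open set $\{r_K>r_L\}$ the minimum equals $r_{e^{tA}L}(\theta)$ for all small $t$, and where $g_L$ is differentiable at $\theta$ the chain rule together with Euler's identity $\langle\nabla g_L(\theta),\theta\rangle=g_L(\theta)$ for the $1$-homogeneous $g_L$ gives
\[
\frac{d}{dt}\Big|_{t=0}r_{e^{tA}L}(\theta)^{\,n}=n\,r_L(\theta)^{\,n+1}\,\bigl\langle\nabla g_L(\theta),A\theta\bigr\rangle ,
\]
while on $\{r_K<r_L\}$ the minimum is the constant $r_K(\theta)$, with derivative $0$. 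The remaining set $\{\theta:r_K(\theta)=r_L(\theta)\}$ is carried, under the bi-Lipschitz radial parametrization $\theta\mapsto r_L(\theta)\theta$ of $\partial L$, into $\partial K\cap\partial L$, hence is $\mathcal H^{n-1}$-null by hypothesis; and $g_L$ is differentiable $\mathcal H^{n-1}$-a.e.\ on $S^{n-1}$, via the same parametrization and the a.e.\ uniqueness of the outer normal recalled in \S\ref{sec:Prelim-and-LinAlg}. Dominated convergence then gives
\[
V'(0)=\int_{\{r_K>r_L\}}r_L(\theta)^{\,n+1}\,\langle\nabla g_L(\theta),A\theta\rangle\,d\theta .
\]

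The last step is to rewrite this as a surface integral over $\partial L$ via the substitution $x=r_L(\theta)\theta$. Its image takes $\{r_K>r_L\}$ onto $\partial L\cap\intr K$, hence onto $\partial L\cap K$ up to the $\mathcal H^{n-1}$-null set $\partial L\cap\partial K$; the surface element transforms as $d\mathcal H^{n-1}(x)=\frac{r_L(\theta)^{\,n-1}}{\langle\theta,\hat n_L(x)\rangle}\,d\theta$; and since $\nabla g_L$ is $0$-homogeneous with $\nabla g_L(x)=\hat n_L(x)/\langle\hat n_L(x),x\rangle$ for $x\in\partial L$ (Euler again), substituting $A\theta=\tfrac1{r_L(\theta)}Ax$ makes every power of $r_L$ and the factor $\langle\theta,\hat n_L(x)\rangle$ cancel, leaving the integrand $\langle\hat n_L(x),Ax\rangle\,d\mathcal H^{n-1}(x)$. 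This is exactly \eqref{lin_der}.

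The crux is the justification of differentiation under the integral sign — concretely the measure-theoretic fact that $\{\theta:r_K(\theta)=r_L(\theta)\}$ is $\mathcal H^{n-1}$-negligible, which is precisely where $\vol_{n-1}(\partial K\cap\partial L)=0$ is used and without which the conclusion fails; the a.e.\ differentiability of $g_L$ on $S^{n-1}$ is a second, milder, point of the same flavour. Once these are in place the pointwise computation and the change of variables are routine bookkeeping. Theorem~\ref{thm:derivative-of-volume-translate} is proved by the identical scheme, with a radial parametrization about a point of $\intr(K\cap L)$ and the velocity field $x\mapsto Ax$ replaced by the constant field $u$ (and the degenerate case $\intr(K\cap L)=\varnothing$, where $V\equiv 0$ near $t=0$ and both sides vanish, handled separately); Theorem~\ref{gen_max_int_pos} then follows by taking $A\in\mathfrak{sl}_n$ (resp.\ $u$ arbitrary), using $V'(0)=0$ since $K,L$ are in maximal intersection position, and recalling that a matrix Hilbert--Schmidt-orthogonal to all traceless matrices is a multiple of $I_n$.
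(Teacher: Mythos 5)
Your proposal is correct and follows essentially the same route as the paper: polar coordinates reduce $V(t)$ to $\frac1n\int_{S^{n-1}}\min(r_K^n,r_{e^{tA}L}^n)$, the hypothesis $\vol_{n-1}(\partial K\cap\partial L)=0$ kills the set $\{r_K=r_L\}$ so that dominated convergence applies, and a change of variables returns the result to $\partial L\cap K$. The only differences are cosmetic: you obtain the uniform domination from the Lipschitz continuity of $g_L$ rather than the mean value theorem for the convex function $h_{L^\circ}$, and you perform the radial change of variables directly via the surface-element formula where the paper packages it as a pushforward statement for the vector surface area measure (Lemma \ref{surf_push}).
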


\begin{proof}[Proof of Theorem \ref{gen_max_int_pos}]
Indeed, as we have already argued above, if $K$ and $L$ are in maximal intersection position then
\begin{equation}
\frac{d}{dt}{\rm vol}_{n}(K\cap (L + tu)) = \int_{\partial L\cap K}\left\langle \hat n_L(x),u\right\rangle d\sigma_L(x)
\end{equation}
must vanish for any $u$, so $\int_{\partial L\cap K}\hat n_L(x) d\sigma_L(x) = 0$; furthermore, for any $A$ with trace zero, such that $e^{tA} \in SL_n$,
\begin{equation}
\frac{d}{dt} \vol_{n}(K\cap e^{tA} L) = \int_{\partial L\cap K}\left\langle \hat n_L(x),Ax\right\rangle d\sigma_L(x)
\end{equation}
vanishes, which is equivalent to the condition that $\int_{\partial L\cap K} (n_L(x)\otimes x)\rangle d\sigma_L(x) \propto I_n$.
\end{proof}
To prove the two differentiation theorems rigorously we shall need to differentiate under the integral sign. To this end we shall use the following lemma which follows directly from the dominated convergence theorem.

\begin{lemma}\label{min_deriv} Let $(X, \mu)$ be a measure space, and let $f_t \in L^1(\mu)$ for all $t \in (-\epsilon, \epsilon)$. Suppose $\left.\frac{d}{dt}f_t(x)\right|_{t = 0}$ exists for almost every $x \in X$ and that the family $\{\frac{f_t - f_0}{t}: t \in (-\epsilon, \epsilon)\}$ is dominated by some integrable function $h$.
Then
\begin{equation}
\left.\frac{d}{dt}\right|_{t = 0} \int_X f_t \,d\mu = \int_X \left.\frac{df_t(x)}{dt}\right|_{t = 0}\,d\mu(x)
\end{equation}
\qed
\end{lemma}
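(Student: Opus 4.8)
The plan is to deduce the statement directly from the dominated convergence theorem via the sequential characterization of limits; no approximation or clever trick is needed, since all the work is already encoded in the hypotheses. First I would observe that for each fixed $t \in (-\epsilon,\epsilon)$ with $t \neq 0$, the difference quotient $g_t := \frac{f_t - f_0}{t}$ lies in $L^1(\mu)$, being a scalar multiple of a difference of two $L^1$ functions; hence $\int_X g_t\,d\mu$ is well-defined and, by linearity of the integral, equals $\frac{1}{t}\bigl(\int_X f_t\,d\mu - \int_X f_0\,d\mu\bigr)$. Thus the assertion of the lemma is equivalent to the statement that $\int_X g_t\,d\mu \to \int_X \left.\frac{df_s}{ds}\right|_{s=0}\,d\mu$ as $t \to 0$.

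Next I would reduce the continuous limit to sequential limits: it suffices to show that for every sequence $t_n \to 0$ with $t_n \neq 0$, one has $\int_X g_{t_n}\,d\mu \to \int_X \left.\frac{df_s}{ds}\right|_{s=0}\,d\mu$. For such a sequence the hypotheses give that $g_{t_n}(x) \to \left.\frac{df_s}{ds}\right|_{s=0}(x)$ for $\mu$-almost every $x$, and that $|g_{t_n}(x)| \le h(x)$ for all $n$ and $\mu$-almost every $x$, with $h \in L^1(\mu)$. In particular the pointwise limit $\left.\frac{df_s}{ds}\right|_{s=0}$ is dominated by $h$ and hence integrable, so the right-hand side of the claimed identity makes sense. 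The dominated convergence theorem now applies to the sequence $(g_{t_n})$ and yields $\int_X g_{t_n}\,d\mu \to \int_X \left.\frac{df_s}{ds}\right|_{s=0}\,d\mu$.

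Finally, since this holds for an arbitrary sequence $t_n \to 0$, the limit $\lim_{t\to 0}\int_X g_t\,d\mu$ exists and equals $\int_X \left.\frac{df_s}{ds}\right|_{s=0}\,d\mu$; unwinding the expression for $g_t$ from the first step gives exactly the stated formula. There is essentially no obstacle here: the only points that merit a moment's care are the reduction from the limit as $t\to 0$ to sequential limits (so that the usual, sequential form of the dominated convergence theorem is directly applicable) and the elementary remark that each difference quotient is genuinely in $L^1(\mu)$, so that every integral written down is legitimate.
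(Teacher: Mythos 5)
Your proposal is correct and is exactly the argument the paper intends: the paper states that the lemma ``follows directly from the dominated convergence theorem'' and omits the details, which you have simply written out (reduction to sequences, domination of the difference quotients by $h$, DCT). No further comment is needed.
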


\begin{remark}
To illustrate how we shall use the above lemma, consider the family $f_t = \min(g_t, h)$ for $g_t$ differentiable everywhere in $t$. Clearly $\min(g_t, h)$ is differentiable in $t$ when $g_t \neq h$, with the result being $\frac{dg_t}{dt}$ for $g_t < h$ and $0$ for $g_t > h$. In particular,
at a given $t$, if $\mu(\{g_t = h\}) = 0$
then $f_t$ is differentiable in $t$ almost everywhere in $x$. As for the second condition of the lemma, case analysis shows that $\frac{|\min(g_t, h) - \min(g_0, h)|}{t} \le \frac{|g_t - g_0|}{t}$ pointwise, so if $\frac{g_t - g_0}{t}$ is dominated by an integrable function, so is $\frac{f_t - f_0}{t}$. Under these conditions,
\begin{equation}
\frac{d}{dt}\int_X \min(g_t, h) = \int_{\{g_t < h\}} \frac{dg_t}{dt}.
\end{equation}
(In fact we can say a bit more: under the same conditions, at any $t$ the left-hand derivative of $\int_X \min(g_t, h)$ exists and equals $\int_{\{g_t < h\}} \frac{dg_t}{dt}$, and the right-hand derivative exists and equals $\int_{\{g_t \le h\}} \frac{dg_t}{dt}$.)

The same considerations will apply to the slightly more complicated function built of minima and maxima of simple functions which we shall encounter presently.
\end{remark}

\begin{proof}[Proof of Theorem \ref{thm:derivative-of-volume-translate}]
Given $K, L$ convex bodies, and $u\in S^{n-1}$, let $u^\perp \subset \mathbb R^n$ be the subspace orthogonal to $u$ and let $P_{u^\perp}$ denote the orthogonal projection to $u^\perp$. Let $X = P_{u^\perp}(K) \cap P_{u^\perp}(L) \subset u^\perp$, and define the functions $w_K^+, w_K^-: X \to \mathbb R$ as follows:
\begin{equation}
w_K^+(x) = \max \{s: x + su \in K\}, w_K^-(x) = \min\{s: x + su \in K\}
\end{equation}
and similarly for $L$, $K$ and $L + tu$. We have $w_{L + tu}^{\pm} = w_L^{\pm} + t$, so for any $x$, the length of $K \cap (L + tu) \cap (x + \mathbb Ru)$ is precisely
\begin{equation}
|[w_K^-(x), w_K^+(x)] \cap [w_L^-(x) + t, w_L^+(x) + t]|.
\end{equation}
We may then  write
\begin{equation}\label{vol_trans}
\vol(K \cap (L + tu)) = \int_X |[w_K^-, w_K^+] \cap [w_L^- + t, w_L^+ + t]|\,dx.
\end{equation}
The family of functions $f_t = |[w_K^-, w_K^+] \cap [w_L^- + t, w_L^+ + t]|$ can be written as a sum of minima and maxima of $w_K^+, w_K^-, w_L^+, w_L^-$ in several ways, but it will be easiest to simply examine it directly. Clearly, $\frac{|f_t - f_0|}{t} \le 1$ for all $t$; also, if
\begin{equation}
\{w_L^+(x), w_L^-(x)\} \cap \{w_K^+(x), w_K^-(x)\} = \emptyset,
\end{equation}
i.e., none of the endpoints of the intervals coincide, we have
\begin{equation}
\left.\frac{df_t(x)}{dt}\right|_{t = 0} = 1_{\{w_L^+ \in (w_K^-, w_K^+)\}} - 1_{\{w_L^- \in (w_K^-, w_K^+)\}}.
\end{equation}
As for the nondifferentiabilty points, if $w_K^+(x) = w_L^+(x)$ then $x + w_K^+(x)u \in \partial L \cap \partial K$, and similarly for the other pairs of width functions, so $f_t$ is differentiable except on $P_{u^\perp}(\partial L \cap \partial K)$. But $\mathcal H^{n - 1}(\partial K \cap \partial L) = 0$ by assumption, and as $P_{u^\perp}$ is Lipschitz, $P_{u^\perp}(\partial L \cap \partial K)$ is also $\mathcal H^{n - 1}$-null. Hence the assumptions of Lemma \ref{min_deriv} are satisfied, and we obtain
\begin{equation}
\left.\frac{d}{dt}\right|_{t = 0} \vol(K \cap (L + tu)) = \vol_{n - 1}(\{x: w_K^+ > w_L^+ > w_K^- \}) - \vol_{n - 1}(\{x: w_K^+ > w_L^- > w_K^-\}).
\end{equation}
Letting $\partial L^+, \partial L^-$ be the positive and negative sides (with respect to the $u$-direction) of the boundary of $L$, respectively, we see that the first term equals
\begin{equation}
\left(\int_{\partial L^+ \cap K} \hat n_L(x)\,d\mathcal H^{n - 1}(x)\right)\cdot u
\end{equation}
and the second term equals
\begin{equation}
\left(\int_{\partial L^- \cap K} \hat n_L(x)\,d\mathcal H^{n - 1}(x)\right)\cdot u
\end{equation}
which sum to
\begin{equation}
\left(\int_{\partial L \cap K} \hat n_L(x)\,d\mathcal H^{n - 1}(x)\right)\cdot u = \int_{\partial L \cap K} \langle \hat n_L(x), u\rangle \,d\mathcal H^{n - 1}(x)
\end{equation}
as desired.
\end{proof}

To prove Theorem \ref{thm:derivative-of-volume-linear} we will make use of the following formula.

\begin{lemma}\label{surf_push} Let $K \in \mathcal K^n$ be a convex body with $0 \in \intr K$, and let $F: \partial K \to S^{n - 1}$ be defined by $F(x) = \frac{x}{|x|}$. Then the pushforward of the vector surface area measure $\hat n_K\,d\mathcal H^{n - 1}$ on $\partial K$ by $F$ is the measure
\begin{equation}-r_K^{n - 2} \nabla r_K\,d\mathcal H^{n - 1}
\end{equation}
on $S^{n - 1}$, where $r_K$ is the radial function of $K$, $r_K(x) = \max \{s \ge 0: sx \in K\}$ (here $\nabla$ is the usual $n$-dimensional gradient, not the spherical gradient).\end{lemma}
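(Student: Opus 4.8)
The plan is to realize $F$ as a bi-Lipschitz homeomorphism whose inverse is the radial parametrization $G\colon S^{n-1}\to\partial K$, $G(\theta)=r_K(\theta)\theta$, and then to compute both the outer unit normal $\hat n_K\bigl(G(\theta)\bigr)$ and the Jacobian $J_G(\theta)$ explicitly in terms of $r_K$. The factor $|\nabla r_K(\theta)|$ will appear both in $J_G$ and in the denominator of the normalized normal, and these cancel, leaving exactly $-r_K^{n-2}\nabla r_K$.

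First I would record the regularity. Since $K\in\mathcal K^n$ and $0\in\intr K$, the gauge $g_K$ is a finite convex function bounded below by a positive constant on $S^{n-1}$, so $r_K=1/g_K$ is Lipschitz on $S^{n-1}$ and locally Lipschitz on $\R^n\setminus\{0\}$; by Rademacher it is differentiable $\mathcal H^{n-1}$-a.e., and $\nabla r_K(\theta)\neq 0$ wherever it exists. Because $0\in\intr K$, each ray through the origin meets $\partial K$ in a single point, so $F$ and $G$ are mutually inverse bijections, and since $|x|$ is bounded below on $\partial K$, $F$ is Lipschitz there, hence $G$ is bi-Lipschitz. Consequently the $\mathcal H^{n-1}$-null set of $\theta\in S^{n-1}$ where $r_K$ is not differentiable, together with the $G$-preimage of the $\mathcal H^{n-1}$-null set of points of $\partial K$ with no unique normal, may be discarded throughout.

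At a point $\theta$ of differentiability, Euler's identity for the $(-1)$-homogeneous function $r_K$ gives $\langle\nabla r_K(\theta),\theta\rangle=-r_K(\theta)$, so one has the orthogonal decomposition $\nabla r_K(\theta)=\nabla_S r_K(\theta)-r_K(\theta)\theta$, where $\nabla_S r_K(\theta)\in T_\theta S^{n-1}$ is the spherical gradient. Differentiating $G$ along $e\in T_\theta S^{n-1}$ yields $dG_\theta(e)=\langle\nabla_S r_K(\theta),e\rangle\,\theta+r_K(\theta)\,e$, so relative to an orthonormal basis of $T_\theta S^{n-1}$ the Gram matrix of $\{dG_\theta(e_i)\}$ is $r_K(\theta)^2 I_{n-1}+ww^{T}$, where $w$ is the coordinate vector of $\nabla_S r_K(\theta)$; by the matrix determinant lemma,
\[
J_G(\theta)=\sqrt{\det\bigl(r_K(\theta)^2 I_{n-1}+ww^{T}\bigr)}=r_K(\theta)^{n-2}\sqrt{r_K(\theta)^2+|\nabla_S r_K(\theta)|^2}=r_K(\theta)^{n-2}\,|\nabla r_K(\theta)|.
\]
For the normal, writing $\partial K=\{g_K=1\}$ and using $\nabla g_K=-g_K^{2}\,\nabla r_K$, the outer unit normal at $G(\theta)$ is $\nabla g_K/|\nabla g_K|$ evaluated there, i.e. $\hat n_K\bigl(G(\theta)\bigr)=-\nabla r_K(\theta)/|\nabla r_K(\theta)|$ (the direction being scale-invariant, homogeneity causes no trouble).

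Finally I would apply the area formula for the Lipschitz map $G$: for any continuous $\phi$ on $S^{n-1}$,
\[
\int_{S^{n-1}}\phi\,d\bigl(F_{*}(\hat n_K\,d\mathcal H^{n-1})\bigr)=\int_{\partial K}\phi\bigl(F(x)\bigr)\,\hat n_K(x)\,d\mathcal H^{n-1}(x)=\int_{S^{n-1}}\phi(\theta)\,\hat n_K\bigl(G(\theta)\bigr)\,J_G(\theta)\,d\mathcal H^{n-1}(\theta),
\]
and substituting the two formulas above, the factor $|\nabla r_K(\theta)|$ cancels, leaving $-\int_{S^{n-1}}\phi(\theta)\,r_K(\theta)^{n-2}\,\nabla r_K(\theta)\,d\mathcal H^{n-1}(\theta)$; as this holds for all $\phi$, it is exactly the claimed identity of $\R^n$-valued measures. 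The only genuinely delicate points are the bookkeeping with the null sets and the justification that the change-of-variables (area) formula applies to the merely Lipschitz map $G$ rather than a $C^1$ one — both standard — while the remainder is a direct computation.
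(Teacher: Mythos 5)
Your proof is correct, but it takes a more self-contained route than the paper's. The paper argues in two steps: it first checks that the pushforward is parallel to $-r_K^{n-2}\nabla r_K$ pointwise, using the identity $\nabla g_K(x) = n_K(x)/h_K(n_K(x))$ for $x \in \partial K$ together with the $(-2)$-homogeneity of $\nabla r_K$, and then reduces the remaining scalar statement to the known change-of-variables formula for the surface area measure under radial projection, which it simply cites from Schneider \cite[Eq.~4.33]{S}. You instead prove that scalar formula from scratch: you parametrize $\partial K$ by $G(\theta)=r_K(\theta)\theta$, compute the tangential Jacobian $J_G(\theta) = r_K^{n-2}\sqrt{r_K^2+|\nabla_S r_K|^2} = r_K^{n-2}|\nabla r_K|$ via the Gram matrix $r_K^2 I_{n-1}+ww^T$ and the matrix determinant lemma, using Euler's identity $\langle\nabla r_K(\theta),\theta\rangle = -r_K(\theta)$ for the $(-1)$-homogeneous function $r_K$ to identify $\sqrt{r_K^2+|\nabla_S r_K|^2}$ with $|\nabla r_K|$; you identify the unit normal at $G(\theta)$ as $-\nabla r_K(\theta)/|\nabla r_K(\theta)|$; and you let the $|\nabla r_K|$ factors cancel in the area formula. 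Both arguments ultimately rest on the same relation between $\nabla r_K$ and the normal, but yours buys independence from Schneider's formula (in effect reproving it) at the cost of invoking the area formula for the bi-Lipschitz map $G$ and the attendant null-set bookkeeping, which you correctly flag and which is indeed standard. The computations check out, in particular $\det\bigl(r_K^2 I_{n-1}+ww^T\bigr) = r_K^{2n-4}\bigl(r_K^2+|w|^2\bigr)$ and the final cancellation yielding $-r_K^{n-2}\nabla r_K$.
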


\begin{proof}  We remark that since $r_K = \frac{1}{h_{K^\circ}}$, $r_K$ is differentiable almost everywhere on $S^{n - 1}$, so $\nabla r_K$ makes sense.

Recall that the normal vector $n_K: \partial K \to S^{n - 1}$ is defined $\mathcal H^{n - 1}$-almost everywhere. We first check that the pushforward of $\hat n_K\,d\mathcal H^{n - 1}$ points in the same direction as $-r_K^{n - 2} \nabla r_K\,d\mathcal H^{n - 1}$, i.e., that for $x \in \partial K$, $\nabla r_K(\frac{x}{|x|})$ is parallel to $-n_K(x)$. By definition $r_K = \frac{1}{g_K}$, where $g_K = \|\cdot\|_K$ is the gauge function of $K$, and for $x \in \partial K$ we have $\nabla g_K(x) = \frac{n_K(x)}{h_K(n_K(x))}$ \cite[Eq. 1.39]{S}, so
\begin{equation}\nabla r_K(x) = -g_K(x)^{-2} \frac{n_K(x)}{h_K(n_K(x))} = -\frac{n_K(x)}{\langle x, n_K(x)\rangle}
\end{equation}
where we have used the fact that $g_K(x) = 1$ for $x \in \partial K$. The function $r_K$ is $-1$-homogeneous and so $\nabla r_K$ is $-2$-homogeneous, and hence for $x\in \partial K$
\begin{equation}\nabla r_K\left(\frac{x}{|x|}\right) = -|x|^2 \frac{n_K(x)}{\langle x, n_K(x)\rangle} = -r_K\left(\frac{x}{|x|}\right) \frac{n_K(x)}{\langle \frac{x}{|x|}, n_K(x)\rangle}
\end{equation}
which in particular is parallel to $-n_K(x)$. Thus we can take the dot product of both measures with $-n_K$ and compare the resulting scalar measures, reducing to the claim that the surface area measure on $K$ pushes forward via $F$ to
\begin{equation}
\langle -n_K, r_K^{n - 2} \nabla r_K\rangle\,d\mathcal H^{n - 1} = \frac{r_K^{n - 1}(u)}{\langle u, n_K(r_K(u) u))}\,d\mathcal H^{n - 1}(u)
\end{equation}
on $S^{n - 1}$, where $u = \frac{x}{|x|}$ and so $x= r_K(u)u$. This is precisely \cite[Eq. 4.33]{S}.
\end{proof}

\begin{proof}[Proof of Theorem \ref{thm:derivative-of-volume-linear}] Given convex bodies $K$ and $L$ and $A\in M_{n}(\mathbb{R})$, let $r_K: \mathbb R^n \to \mathbb R^+$ be the radial function of $K$ as in the lemma, and similarly $r_{e^{tA} L}$ and  $r_{K \cap (e^{t A} L)} = \min(r_K, r_{e^{t A} L})$. We have
\begin{equation}\label{vol_dil}
\vol(K \cap (e^{t A} L)) = \frac{1}{n}\int_{S^{n - 1}} \min(r_K^n, r_{e^{t A} L}^n)\,d\sigma
\end{equation}
where $\sigma = \mathcal H^{n - 1}$ is the usual (not normalized) area measure on the sphere. Note that
\begin{equation}
r_{e^{t A} L}(x) = \max \{s: sx \in e^{t A} L\} = \max \{s: s e^{-t A} x \in L\} = r_L(e^{-t A} x).
\end{equation}
Also note that $r_L(e^{-t A} x)$ is differentiable in $t$ almost everywhere on $S^{n - 1}$ because $r_L = \frac{1}{h_L^\circ}$ and the gradient of the support function of a convex body exists $\mathcal H^{n - 1}$-almost everywhere on $S^{n - 1}$.

By assumption, $\vol_{n - 1}(\partial K \cap \partial L) = 0$; since $\{x \in S^{n - 1}: r_K(x) = r_L(x)\}$ is the image of $\partial K \cap \partial L$ under the map $x \mapsto \frac{x}{|x|}$, which is Lipschitz away from $0$, we have $\sigma(\{x \in S^{n - 1}: r_K(x) = r_L(x)\}) = 0$. Hence, by the remark following Lemma \ref{min_deriv}, to apply the lemma to \eqref{vol_dil}, we  need only check that $\frac{|r_L(e^{-t A} x)^n - r_L(x)^n|}{t}$ is dominated by an integrable function. Since $L$ is a convex body with $0 \in \intr L$, there exist $r_-, r_+$ such that $0 < r_- \le r_L(x) \le r_+$, so it is sufficient to show that $\frac{|r_L(e^{-t A} x) - r_L(x)|}{t}$ is dominated. Rewrite this expression as
\begin{equation}
\frac{|h_{L^\circ}(e^{-t A} x)^{-1} - h_{L^\circ}(x)^{-1}|}{t} = \frac{1}{h_{L^\circ}(x) h_{L^\circ}(e^{-t A} x)} \frac{h_{L^\circ}(x) - h_{L^\circ}(e^{-t A} x)}{t}.
\end{equation}
Again, $\frac{1}{h_{L^\circ}(x) h_{L^\circ}(e^{-t A} x)}$ is uniformly bounded, and we need only consider $\frac{h_{L^\circ}(x) - h_{L^\circ}(e^{-t A} x)}{t}$. As $h_{L^\circ}$ is a convex function,   the mean value theorem for convex functions yields that there exists some $z\in \partial h_{L^\circ}(y)$  such that $\frac{h_{L^\circ}(x) - h_{L^\circ}(e^{-t A} x)}{t}  = \langle z, e^{-tA}x - x\rangle $ for some $y \in [x, e^{-t A} x]$, and $\partial h_{L^\circ}(y)$ is the support set of $L^\circ$ at $y$ \cite[Theorem 1.7.4]{S}, which is in particular uniformly bounded. Thus the conditions of Lemma \ref{min_deriv} are satisfied, and we obtain
\begin{equation}
\left.\frac{d}{dt}\vol(K \cap (e^{t A} L))\right|_{t = 0} = \frac{1}{n}\int_{\{r_L < r_K\}} \left.\frac{d}{dt} r_{e^{t A} L}^n\right|_{t = 0}\,d\sigma.
\end{equation}

For all $u \in S^{n - 1}$ such that $\nabla r_L$ exists at $u$, we have
\begin{equation}
\left.\frac{d}{dt} \left(\frac{1}{n} r_{e^{t A} L}^n(u)\right)\right|_{t = 0} = r_L(u)^{n - 1} \left.\frac{d}{dt}\right|_{t = 0} r_L(e^{-t A} u) = -r_L(u)^{n - 1} \langle\nabla r_L(u), Au\rangle
\end{equation}
so we obtain
\begin{equation}
\frac{d}{dt}\vol(K \cap (e^{t A} L)) = -\int_{\{r_L < r_K\}} r_L(u)^{n - 1} \langle\nabla r_L(u), Au\rangle\,d\sigma.
\end{equation}
By Lemma \ref{surf_push}, $-r_L(u)^{n - 2} \nabla r_L(u)\,d\sigma$ is the push-forward of the measure $\hat n_L\,d\mathcal H^{n - 1}$ on $\partial L$ under $x \mapsto \frac{x}{|x|}$, so pulling back the integral under this change of variables, the right hand becomes
\begin{equation}
\int_{\partial L \cap K} r_L\left(\frac{x}{|x|}\right) \left\langle \hat n_L(x), \frac{Ax}{|x|}\right\rangle\,d\mathcal H^{n - 1}.
\end{equation}
But $r_L$ is $-1$-homogeneous, so for $x \in \partial L$ we have
\begin{equation}
r_L\left(\frac{x}{|x|}\right) \left\langle \hat n_L(x), A\frac{x}{|x|}\right\rangle = \langle \hat n_L(x), Ax\rangle,
\end{equation}
and we are done.
\end{proof}

\subsection{Second proof of Theorem \ref{gen_max_int_pos}}

In the previous section we included two detailed proofs for somewhat similar theorems. In both theorems, we consider a one-parameter family of perturbations of a shape, and check how volume is affected. Since the reader may be interested in different families of perturbations, or in the general phenomenon, we provide an alternate route which works for very general families of diffeomorphisms. The drawback, however, is that in this general setting, the assumptions on the bodies are somewhat more restrictive. In particular, we obtain Theorem \ref{gen_max_int_pos}, with more restrictive assumptions, as a corollary of the following general theorem:

\begin{theorem}\label{diff_int}
Let $\varphi_t: \mathbb R^n \to \mathbb R^n$ be a family of diffeomorphisms defined on some interval $(-\epsilon, \epsilon)$ such that $\dot\varphi_t(x) = \frac{d}{dt}\varphi_t(x)$ exists everywhere and is bounded. Let $K$ be a (closed) Lipschitz domain in $\mathbb R^n$ (a set whose boundary $\partial K$ can locally be written as the zero set of a Lipschitz function), and set $K_t = \varphi_t(K)$.

Let $f$ be a bounded upper semicontinuous function defined on a neighborhood of $K$ such that
\begin{equation}
dc(f) = \{x: \text{$f$ is not continuous at $x$}\}
\end{equation}
satisfies $\mathcal H^{n - 1}(\partial K_t \cap dc(f)) = 0$ for all $t \in (-\epsilon, \epsilon)$. Then the function $\int_{K_t} f$ is differentiable on $(-\epsilon, \epsilon)$, and we have
\begin{equation}\label{lip_deriv}
\frac{d}{dt} \int_{K_t} f = \int_{\partial {K_t}} (\dot\varphi_t \cdot n_{K_t}) f\,d\mathcal H^{n - 1}
\end{equation}
where $n_{K_t}$ is the unit normal to $\partial K_t$ (which exists $\mathcal H^{n - 1}$-almost everywhere).
\end{theorem}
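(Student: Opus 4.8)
The plan is to reduce to computing the derivative at a single time, to analyze the thin collar of $\partial K_t$ that is swept as $s\to 0$, and to pass to the limit with the coarea formula, using boundedness and upper semicontinuity of $f$ together with the hypothesis $\mathcal H^{n-1}(\partial K_t\cap dc(f))=0$. Fixing $t_0$ and replacing $\varphi_s$ by $\psi_s:=\varphi_{t_0+s}\circ\varphi_{t_0}^{-1}$ and $K$ by $K_{t_0}$, we may assume $t_0=0$, $\varphi_0=\mathrm{id}$, $K_0=K$: indeed $K_{t_0}$ is again a closed Lipschitz domain (a diffeomorphic image of one), $\dot\psi_s=\dot\varphi_{t_0+s}\circ\varphi_{t_0}^{-1}$ is still bounded, and the null-set hypothesis is unchanged. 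So, writing $v:=\dot\varphi_0$ (a bounded field near $K$), it suffices to show that $s\mapsto\int_{K_s}f$ is differentiable at $0$ with derivative $\int_{\partial K}(v\cdot n_K)f\,d\mathcal H^{n-1}$; for the one-parameter groups used in the application (translations, linear maps) $v$ agrees with the field written "$\dot\varphi_0$" in the statement.

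I would then write $\int_{K_s}f-\int_K f=\int_{K_s\setminus K}f-\int_{K\setminus K_s}f$. Since $|\varphi_s(y)-y|\le C|s|$ near $K$ by boundedness of $\dot\varphi$, both $K_s\setminus K$ and $K\setminus K_s$ lie in the collar $\{|\rho_K|<C|s|\}$, where $\rho_K$ is the signed distance to $\partial K$, negative inside. Changing variables $x=\varphi_s(y)$,
\[
\int_{K_s\setminus K}f\,dx=\int_{\{y\in K:\ \varphi_s(y)\notin K\}}f(\varphi_s(y))\,|\det D\varphi_s(y)|\,dy .
\]
Since $\rho_K$ is $1$-Lipschitz with $|\nabla\rho_K|=1$ a.e., the coarea formula rewrites this as an integral over the level sets $\{\rho_K=-r\}$, $r>0$ small. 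At a.e.\ $y$, $\rho_K(\varphi_s(y))-\rho_K(y)=s\langle\nabla\rho_K(y),v(y)\rangle+o(s)$, so $y$ lies in the inner collar only when $\langle\nabla\rho_K(y),v(y)\rangle>0$ and $0<-\rho_K(y)<s\langle\nabla\rho_K(y),v(y)\rangle+o(s)$, and $\nabla\rho_K(y)\to n_K(y_0)$ as $y\to y_0\in\partial K$ for $\mathcal H^{n-1}$-a.e.\ $y_0$.

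Dividing by $s$ and letting $s\to 0^+$, dominated convergence applies — the difference quotient is bounded by $\|f\|_\infty$ times the coarea density, uniformly in $s$ — and gives
\[
\frac1s\int_{K_s\setminus K}f\,dx\ \longrightarrow\ \int_{\partial K\cap\{v\cdot n_K>0\}}f\,(v\cdot n_K)\,d\mathcal H^{n-1}.
\]
Here, identifying $\lim_s f(\varphi_s(y))$ with the boundary value $f(y_0)$ is exactly where upper semicontinuity (for the $\limsup$) and continuity of $f$ at $\mathcal H^{n-1}$-a.e.\ point of $\partial K$, i.e.\ $\mathcal H^{n-1}(\partial K\cap dc(f))=0$ (for the $\liminf$), are used. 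The symmetric analysis gives $\tfrac1s\int_{K\setminus K_s}f\to-\int_{\partial K\cap\{v\cdot n_K<0\}}f\,(v\cdot n_K)\,d\mathcal H^{n-1}$, and subtracting yields the right derivative $\int_{\partial K}f\,(v\cdot n_K)\,d\mathcal H^{n-1}$; running $s\to 0^-$ produces the equal left derivative, hence differentiability at $0$. As $t_0$ was arbitrary and the hypotheses hold at every $t$, $\int_{K_t}f$ is differentiable on $(-\epsilon,\epsilon)$ with the stated formula. Theorem \ref{gen_max_int_pos} then follows (with the stronger assumption that the boundary of one body meets the boundary of every image of the other in the relevant family in an $\mathcal H^{n-1}$-null set) by taking $f$ to be the indicator of one body, so $dc(f)$ is its boundary, and letting $\varphi_t$ be the groups of translations and of volume-preserving linear maps of the other, for which $\dot\varphi_0(x)=u$ and $\dot\varphi_0(x)=Ax$ respectively.

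The main obstacle is making the thin-collar estimate rigorous for a merely Lipschitz domain: controlling $\mathcal H^{n-1}(\{\rho_K=-r\})$ and its convergence to $\partial K$ as $r\to 0$, showing that the collar $\{y\in K:\varphi_s(y)\notin K\}$ coincides with the region dictated by $\langle\nabla\rho_K,v\rangle$ up to $\mathcal H^n$-negligible error, and above all extracting a pointwise a.e.\ limit from the moving integrands $f(\varphi_s(y))$ — the point where the two continuity-type hypotheses on $f$ must be combined in a $\limsup$/$\liminf$ sandwich before dominated convergence can be invoked. One also needs $D\varphi_s\to I$, which is automatic under any reasonable reading of "family of diffeomorphisms" and certainly for the flows in the application. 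Finally, the weakness of the regularity assumed on $\varphi_t$ — only that $\dot\varphi_t$ exists and is bounded, with no control on $\partial_t D\varphi_t$ — is precisely what forbids the easier route of changing variables to the fixed domain $K$ and differentiating $\int_K f(\varphi_t(y))\,|\det D\varphi_t(y)|\,dy$ under the integral sign.
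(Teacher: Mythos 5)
Your route is genuinely different from the paper's. The paper never touches the geometry of the moving boundary at all: it takes the transport formula for \emph{smooth} $f$ as the known base case, upgrades it to continuous $f$ by uniform approximation (a three-term estimate using $\vol(K_{t+\delta}\triangle K_t)=O(\delta)$), and then upgrades to upper semicontinuous $f$ by writing $f$ as a decreasing limit of continuous $f_n$ (Baire), splitting $\dot\varphi_t\cdot n_{K_t}$ into its positive and negative parts so that $f_n\psi_t^{(\pm)}$ stays monotone in $n$, and invoking Dini plus the ``uniform convergence of derivatives'' theorem; the hypothesis $\mathcal H^{n-1}(\partial K_t\cap dc(f))=0$ enters only to prove continuity in $t$ of $\int_{\partial K_t}f\psi_t^{(\pm)}$. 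You instead attack the difference quotient directly by a collar/coarea analysis. That is the classical geometric proof of the transport formula, and if carried out it would arguably be more self-contained (the paper's base case itself leans on the change-of-variables computation whose justification under the stated regularity of $\varphi_t$ you rightly question).

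However, as written your argument has genuine gaps precisely at the steps you defer to the final paragraph, and these are not routine for a merely Lipschitz $K$ and a merely bounded, a.e.-continuous $f$. First, the identification of $\{y\in K:\varphi_s(y)\notin K\}$ with $\{0<-\rho_K(y)<s\langle\nabla\rho_K(y),v(y)\rangle+o(s)\}$ requires the expansion $\varphi_s(y)=y+sv(y)+o(s)$ to be \emph{uniform} in $y$; the hypothesis only gives pointwise existence of $\dot\varphi_t(x)$ with a uniform bound, so the $o(s)$ is not uniform and the collar need not be captured up to $\mathcal H^n$-negligible error. Second, after the coarea formula you must convert integrals over the level sets $\{\rho_K=-r\}$ into an integral over $\partial K$ as $r\to 0$; for Lipschitz domains the nearest-point projection is neither single-valued nor continuous, $\nabla\rho_K(y)$ need not converge to $n_K(y_0)$ along every approach, and the weak convergence of $\mathcal H^{n-1}$ restricted to the level sets against the discontinuous weight $f\,(v\cdot n_K)$ is exactly the kind of statement that needs rectifiability/Minkowski-content input plus your $\limsup$/$\liminf$ sandwich -- none of which is supplied. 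Third, the invocation of dominated convergence is not literally a domination: the quantity $\frac1s\int_{K_s\setminus K}f$ is an average over $r\in(0,Cs)$ of level-set integrals, so what is needed is a Lebesgue-differentiation-type statement for that average, not a pointwise dominated family. So while the strategy is viable, the proposal is a plan whose hardest steps remain unproven; the paper's approximation-in-$f$ argument is designed specifically to avoid ever having to make the thin-collar estimate rigorous.
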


As a corollary, we get Theorem \ref{gen_max_int_pos}  with a slightly more restrictive assumption, namely that $\vol_{n - 1}(\partial K_t \cap \partial L) = 0$ for all $t$ in a neighborhood of $0$. Indeed, letting $f = 1_L$, which is upper semicontinuous and discontinuous precisely on $\partial L$, and letting $\varphi_t$ be the same families of transformations as before, namely translation by $tu$ for $u \in \mathbb R^n$ and multiplication by $e^{t A}$ for a traceless matrix $A$, we see that $\int_{K_t} f = \vol(\varphi_t(K) \cap L)$.

\begin{proof}[Proof of Theorem \ref{diff_int}]
Our starting point is that \eqref{lip_deriv} holds when $f$ is smooth; this is the usual formula for the derivative of an integral over a time-varying domain (see, e.g., \cite{Fl}). Since \eqref{lip_deriv} does not involve derivatives of $f$, it is easy to show that it holds for all continuous functions. Indeed, let $f$ be continuous. Given $\epsilon > 0$, let $g$ be smooth with $\|g - f\|_\infty < \epsilon$. For any $\delta$, we have
\begin{multline}\label{lip_deriv_approx}
\frac{1}{\delta}\left|\int_{K_{t + \delta}} f - \int_{K_t} f - \delta\int_{\partial {K_t}} f (\dot\varphi_t \cdot n_{K_t}) \right| \le \\ \frac{1}{\delta}\left|\int_{K_{t + \delta}} g - \int_{K_t} g - \delta\int_{\partial {K_t}} g (\dot\varphi_t \cdot n_{K_t})\right| + \frac{1}{\delta}\int_{K_{t + \delta}\triangle K_t} |f - g| + \left|\int_{\partial {K_t}} (f - g) (\dot\varphi_t \cdot n_{K_t}) \right|
\end{multline}
where $\triangle$ denotes the symmetric difference.
The first term in \eqref{lip_deriv_approx} tends to $0$ as $\delta \to 0$ since \eqref{lip_deriv} holds for the smooth function $g$. The second term is bounded by $\frac{\|g - f\|_\infty}{\delta}\vol(K_{t + \delta}\triangle K_t)$, but
\begin{equation}
\frac{1}{\delta}\vol(K_{t + \delta}\triangle K_t) = \frac{1}{\delta} \left(\int_{K_{t + \delta}} 1 - \int_{K_t} 1\right) \to \vol(K_t)',
\end{equation}
which is finite because \eqref{lip_deriv} holds for the smooth function $1$. Finally, the third term is bounded by $\|g - f\|_\infty \||\dot\varphi_t|\|_\infty \mathcal H^{n - 1}(\partial K_t)$. Hence we obtain
\begin{multline}\limsup_{\delta \to 0} \frac{1}{\delta}\left|\int_{K_{t + \delta}} f - \int_{K_t} f - \delta\int_{\partial {K_t}} f (\dot\varphi_t \cdot n_{K_t})\,d\mathcal H^{n - 1} \right| \\\le \epsilon (\vol(K_t)' + \||\dot\varphi_t|\|_\infty \mathcal H^{n - 1}(\partial K_t))
\end{multline}
for any $\epsilon$; taking the limit as $\epsilon \to 0$ shows that the limit on the RHS of \eqref{lip_deriv_approx} exists and equals $0$, i.e., \eqref{lip_deriv} holds for $f$.

We now consider the case where $f$ is upper semicontinuous. By Baire's characterization theorem \cite[\S 42.1]{H}, $f$ may be written as the pointwise limit of a decreasing sequence of continuous functions $f_n$  (one such sequence is the sup-convolution of $f$ with an appropriate sequence of Lipschitz kernels). Let $\psi_t^{(+)} = \max(\dot\varphi_t \cdot n_{K_t}, 0)$, $\psi_t^{(-)} = \min(\dot\varphi_t \cdot n_{K_t}, 0)$, and define the functions
\begin{align}\alpha_n(t) &= \int_{K_t} f_n & \beta_n^{(\pm)}(t) &= \alpha_n'(t) = \int_{\partial K_t} f_n \psi_t^{(\pm)} \\
\alpha(t) &= \int_{K_t} f & \beta^{(\pm)} (t) &= \int_{\partial {K_t}} f \psi_t^{(\pm)}.
\end{align}
Note that $\beta_n^{(+)}(t) + \beta_n^{(-)}(t) = \int_{\partial K_t} f_n (\dot\varphi_t \cdot n_{K_t}) = \alpha_n'(t)$. Let $\beta(t) = \beta^{(+)}(t) + \beta^{(-)}(t)  = \int_{\partial K_t} f (\dot\varphi_t \cdot n_{K_t})$; our goal is to show that $\alpha'(t) = \beta(t)$. By a standard theorem (essentially, the fact that $C^1[0, 1]$ is complete), it's sufficient to show that $\alpha_n, \alpha, \beta_n, \beta$ are continuous in $t$ and that $\alpha_n \to \alpha$, $\beta_n^{(\pm)} \to \beta^{(\pm)}$ uniformly on compact subsets of $(-\epsilon, \epsilon)$.

Assuming all the functions are continuous, the uniform convergence of $\int_{K_t} f_n$ to $\int_{K_t} f$ and of $\int_{\partial {K_t}} f_n \psi_t^{(\pm)}$ to $\int_{\partial {K_t}} f \psi_t^{(\pm)}$ on compact subsets follows immediately by our choice of $f_n$: indeed, for all three sequences, pointwise convergence follows from the monotone convergence theorem (note that for every $t$, $f_n \psi_t^{(-)}$ is monotone increasing in $n$, while $f_n \psi_t^{(-)}$ is monotone decreasing), and pointwise convergence implies uniform convergence on compact subsets by monotonicity and Dini's theorem.

So we have reduced to the following claim: given $K$, $\varphi_t$, $K_t = \varphi_t(K)$ as above, and $f$ bounded such that the discontinuity set $dc(f)$ of $f$ intersects $K_t$ in a $\mathcal H^{n - 1}$-null set for all $t$, the functions $\alpha(t) = \int_{K_t} f$ and $\beta^{(\pm)}(t) = \int_{\partial {K_t}} f \psi_t^{(\pm)}$ are continuous in $t$.

For $\alpha$ this is easy: $|\alpha(t + \delta) - \alpha(t)| \le \|f\|_\infty \vol(K_{t + \delta} \triangle K_t)$, and we have already seen that $\vol(K_{t + \delta} \triangle K_t) = O(\delta)$. For $\beta^{(\pm)}$, we use the change of variables formula for rectifiable sets to write
\begin{equation}\beta(t) = \int_{\partial K} |J_{\partial K}(\varphi_t)| (f \circ \varphi_t) (\psi_t^{(\pm)} \circ \varphi_t)
\end{equation}
where $J_{\partial K}$ is the $(n - 1)$-dimensional Jacobian of $\varphi_t$ restricted to $\partial K$. The details of the formula don't matter: what we need is simply that the discontinuity set of $f \circ \varphi_t$, which is $\varphi_t^{-1}(dc(f))$, has $\mathcal H^{n - 1}$-null intersection with $\partial K$, and the other terms in the integrand are continuous in $t$; hence, writing the integrand as $m(t, x)$, we have that for any $t$ and any given sequence $t_n \to t$, $m(t_n, x) \to m(t, x)$ almost everywhere, so by dominated convergence, $\int_{\partial K} m(t_n, x) \to \int_{\partial K} m(t, x)$, i.e., $\beta^{(\pm)}(t_n) \to \beta^{(\pm)}(t)$. Hence $\beta^{(\pm)}$ is continuous, which concludes the proof.
\end{proof}

\subsection{Further remarks on general maximal intersection position}
In this concluding subsection we make some remarks on general maximal intersection position and its relationship to the general John positions of Section \ref{section:pos_john}, following in the footsteps of Artstein-Avidan and Katzin \cite{AK}.

Artstein-Avidan and Katzin pointed out that the maximal intersection position of a convex body $K$ and $B^n_2$ is unique if a certain variant of the strong (B)-property holds for the uniform measure $\mu_{B^n_2}$ on $B^n_2$. A centrally symmetric measure $\mu$ is said to have the strong (B)-property if for every centrally symmetric convex body $K$ and every diagonal matrix $D$, the function $t \mapsto \mu(e^{tD} K)$ is log-concave. Suppose $\mu_{B^n_2}$ has the strong (B)-property, and suppose that $B_2^n$ and $A B_2^n$ are distinct maximizers of $\{\vol(K \cap \mathcal E): \text{$\mathcal E$ an ellipsoid of volume $1$}\}$. By choosing an appropriate basis we may write $A B^n_2 = e^{-\Lambda} B^n_2$ for some traceless diagonal matrix $\Lambda$. Consider the function
\begin{equation}
f(t) = \vol(K \cap e^{-\Lambda t} B_2^n) = \vol(e^{\Lambda t} K \cap B_2^n) = \mu_{B^n_2}(e^{\Lambda t} K).
\end{equation}

By assumption, $f$ is log-concave and attains its maximum at $0, 1$, so it must be constant on $[0, 1]$. This would yield a contradiction if we make the reasonable further assumption that the only equality cases in the inequality
\begin{equation}
\vol(e^{\frac{\Lambda}{2}} K \cap B_2^n)^2 \ge \vol(K \cap B_2^n) \vol(e^\Lambda K \cap B_2^n)
\end{equation}
are the trivial ones, namely when $K \subset B_2^n$ or $B_2^n \subset K$. We call this the ``double-strength'' (B)-property.

Cordero-Erausquin and Rotem have recently shown that the strong (B)-property holds for rotationally-invariant log-concave measures \cite{CR}, which covers in particular the uniform measure on $B^n_2$ and hence shows that the maximal intersection position of $K$ with respect to $B^n_2$ is in fact unique. (This of course continues to hold when $B^n_2$ is replaced by $r B^n_2$, and more generally when $B^n_2$ is replaced by any ellipsoid, which can be transformed into $r B^n_2$ by an affine transformation whose linear component lies in $SL_n$.) However, they do not examine the equality cases, and it seems difficult to extract them for the uniform measure $B^n_2$ from their method, which directly proves the strong (B)-property for smooth rotationally invariant log-concave measures and then obtains the property for nonsmooth measures by approximation.

In the general setting of two bodies $K, L$, even the ``double-strength'' (B)-property for one of the bodies would not suffice to obtain uniqueness of the maximal intersection position, because two positions of $L$ might not be related by a positive-definite matrix. In fact, we already know that the maximal intersection position of two bodies cannot be unique in general, because the position of maximal volume is not unique and this is a special case of maximal intersection position. However, along the lines of the positive John position we introduced in \S \ref{section:pos_john}, we are led to suggest a definition of positive maximal intersection position, where the images of $K$ intersected with $L$ vary only over positive matrices:

\begin{definition}[Positive maximal intersection position] Let $K, L \subset \mathbb R^n$ be convex bodies. We say that $K, L$ are in positive maximal intersection position with respect to each other if for every positive-definite symmetric matrix $A \in SL_n$ and $z \in \mathbb R^n$ and $L' = AL + z$, we have $\vol(K \cap L') \le \vol(K \cap L)$; clearly, this definition is symmetric with respect to an interchange of $K$ and $L$.
\end{definition}

With this definition, the same argument yields that positive maximal intersection position is unique if the uniform measure on $L$ satisfies the double-strength (B)-property with respect to the body $K$.

We may state and prove a version of Theorem \ref{gen_max_int_pos} in the setting of \textit{positive} maximal intersection position, with symmetrization required to get a decomposition of the identity, as in Theorem \ref{pos_john_contact}. The proof of the following theorem is identical to the proof of Theorem \ref{gen_max_int_pos}, the sole difference being that the orthogonality to {\em symmetric} traceless matrices forces us to consider the symmetric part of $n_K(x) \otimes x$:

\begin{theorem}
	 Let $K, L \subset \mathbb R^n$ be convex bodies, and suppose that $K, L$ are in maximal intersection position and that $\vol_{n-1}(\partial K\cap\partial L) = 0$.  Then we have
	\begin{align}
 \int_{K \cap \partial L} \hat n_L(x)\,d\mathcal H^{n - 1}           &= 0            \\
	 \int_{\partial K \cap L} (n_K(x) \otimes x + x \otimes n_K(x))\,d\mathcal H^{n - 1}(x)   &\propto I_n  .
	\end{align}
\end{theorem}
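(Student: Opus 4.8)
\emph{Proof proposal.} The plan is to copy the first proof of Theorem~\ref{gen_max_int_pos} essentially word for word, changing only the family of admissible perturbations: instead of all of $SL_n$ we are allowed to move $L$ only by positive-definite $SL_n$-matrices (and translations). Since translations are still admissible, for each $u \in \mathbb R^n$ the function $t \mapsto \vol_n(K \cap (L + tu))$ has a local maximum at $t = 0$; by Theorem~\ref{thm:derivative-of-volume-translate}, using the hypothesis $\vol_{n-1}(\partial K \cap \partial L) = 0$, its derivative there equals $\int_{\partial L \cap K}\langle \hat n_L(x), u\rangle\,d\sigma_L(x)$, which must therefore vanish for every $u$. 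This gives $\int_{K \cap \partial L} \hat n_L(x)\,d\mathcal H^{n-1} = 0$, the first displayed identity (here one uses the obvious invariance of positive maximal intersection position under simultaneous translation of $K$ and $L$, which also lets us assume $0 \in \intr(K \cap L)$ for the next step).

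For the second identity I would perturb $L$ inside the positive cone. The key observation is that for a matrix $A$, the path $t \mapsto e^{tA}$ stays inside $\mathcal P^n \cap SL_n$ \emph{precisely} when $A$ is symmetric and traceless: then $e^{tA}$ is symmetric, positive-definite (as the exponential of a symmetric matrix), and has determinant $e^{t\,\Tr A} = 1$. For such $A$ the body $e^{tA}L$ is an admissible competitor, so $t \mapsto \vol_n(K \cap e^{tA}L)$ has a maximum at $t = 0$, and Theorem~\ref{thm:derivative-of-volume-linear} yields
\begin{align*}
0 = \left.\frac{d}{dt}\right|_{t=0}\vol_n(K \cap e^{tA}L) &= \int_{\partial L \cap K} \langle \hat n_L(x), Ax\rangle\,d\sigma_L(x) \\
&= \left\langle A, \int_{\partial L \cap K} x \otimes \hat n_L(x)\,d\sigma_L(x)\right\rangle,
\end{align*}
using the identity $\langle M, x \otimes y\rangle = \langle y, Mx\rangle$. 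Thus $N := \int_{\partial L \cap K} x \otimes \hat n_L(x)\,d\sigma_L(x)$ is Hilbert--Schmidt orthogonal to every symmetric traceless matrix; since $\langle A, N\rangle = \langle A, N_{sym}\rangle$ for symmetric $A$, this forces $N_{sym}$ to be orthogonal, inside $\Sym^{n\times n}$, to the hyperplane of traceless symmetric matrices, i.e. $N_{sym} \propto I_n$. Spelling this out gives $\int_{\partial L \cap K}(x \otimes \hat n_L(x) + \hat n_L(x) \otimes x)\,d\sigma_L(x) \propto I_n$, and interchanging the roles of $K$ and $L$ (the definition is symmetric) produces the stated $\int_{\partial K \cap L}(n_K(x) \otimes x + x \otimes n_K(x))\,d\mathcal H^{n-1}(x) \propto I_n$.

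There is essentially no new obstacle: the only analytic work — differentiating the volume of an intersection of two bodies under a one-parameter perturbation, justified under $\vol_{n-1}(\partial K \cap \partial L) = 0$ — is already carried out in Theorems~\ref{thm:derivative-of-volume-translate} and~\ref{thm:derivative-of-volume-linear} (or, with the more restrictive hypothesis that $\vol_{n-1}(\partial K_t \cap \partial L) = 0$ for all small $t$, in Theorem~\ref{diff_int} applied with $f = 1_L$). The single point that requires care, and the reason symmetrization appears, is the bookkeeping of step two: the admissible directions now span only the \emph{symmetric} traceless matrices, so the vanishing-derivative condition annihilates $N$ only against that subspace, whence $N_{sym}$ — rather than $N$ itself — is proportional to the identity, exactly as in the passage from Theorem~\ref{gen_john_thm} to Theorem~\ref{pos_john_contact}.
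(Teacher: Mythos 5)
Your proposal is correct and follows exactly the route the paper takes: it reuses Theorems~\ref{thm:derivative-of-volume-translate} and~\ref{thm:derivative-of-volume-linear} with the perturbation family restricted to translations and to $e^{tA}$ for $A$ symmetric and traceless, observing that this only annihilates the symmetric part of $\int x\otimes \hat n_L(x)\,d\sigma_L$ against traceless symmetric matrices, whence the symmetrized conclusion. This is precisely the paper's (one-line) argument, so nothing further is needed.
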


We can also consider maximal intersection position from the set-valued analysis perspective introduced in \S \ref{subsection:set_valued}. Here, we observe that the continuity of $\vol(K \cap AL)$ in $A$ means that one can apply the maximum theorem, showing that the map
\begin{equation}\label{int_vol_fun}
f^*: r \mapsto \max \{\vol(K \cap (r A L + z): \det(A) = 1, z \in \mathbb R^n\}
\end{equation}
is continuous in $r$, and the correspondence
\begin{equation}
C^*: r \mapsto \{(A, z): \vol(K \cap (A L + z) = f^*(r)\}
\end{equation}
is upper hemicontinuous. In the case $L = B^n_2$ this was implicitly observed by Artstein-Avidan and Katzin, though they were somewhat cavalier about the possible non-uniqueness of the maximal intersection ellipsoids (see \cite[Lemmas 2.3, 2.4]{AK}). 

Note that in the setting of generic $K, L$, the maximum theorem can be used both for general maximal intersection position, which maximizes over all affine images of $L$ with given volume, and for positive maximal intersection position, which maximizes only over positive images. In the latter case, we obtain a function and a correspondence of two parameters:
\begin{align}
f^*_{pos}: r, U &\mapsto \max \{\vol(K \cap (r P U L + z): P \in \mathcal P^n, z \in \mathbb R^n\,|\,\det(P) = 1\} \\
C^*_{pos}: r, U &\mapsto \{(P, z): \vol(K \cap (r P L + z) = f^*(r, U)\} \label{pos_int_vol_corr}
\end{align}
Maximizing $f^*_{pos}$ with respect to $U$ simply yields the $f^*$ of Equation \eqref{int_vol_fun}, while considering a fixed $U$ leads one to the conjecture stated above, namely that the correspondence $C^*_{pos}$ of \eqref{pos_int_vol_corr} is actually single-valued. In any case, the maximum theorem guarantees that if $K$ is in positive John position with respect to $L$, any sequence $L_n$ of positive images of $L$ having maximal intersection with $K$ such that $\vol(L_n) \to \vol(L)$ necessarily converges to $L$, as Artstein-Avidan and Katzin showed in the case $L = B^n_2$.

Finally, Artstein-Avidan and Katzin showed that if $K$ is in John position, the isotropic measures on $S^{n - 1}$ obtained by taking the maximal intersection position of $K$ with volume restriction approaching the volume of the ball, converge to an isotropic measure on $S^{n - 1}$ \cite[Theorem 1.5]{AK}. In other words, the isotropic measure guaranteed by John's theorem can be thought of as a limit of maximal intersection measures.

We note that the same exact proof applies in the general setting to yield the following generalization of their result:

\begin{theorem}
	\label{thm:measure-limit-john}Let $K, L \in \mathcal K^n$ such that $L$ is in a position of maximal volume with respect to $K$. For every $r > 1$, let
	\begin{equation}
	T_r L + z_r \in \argmax \{\vol((TL + z) \cap K): \det(T) = r, z \in \mathbb R^n\}
	\end{equation}
	be an image of $L$ of volume $r \vol(L)$ having maximal intersection with $K$, and denote by $\mu_{r}$ the uniform probability measure on $\partial K\backslash (T_r L + z_r)$. Also suppose $\vol_{n-1}(\partial K\cap\partial(T_r L + z_r))=0$ for $r$ sufficiently close to $1$. Then there exists a sequence $r_{j}\searrow 1$ such that the sequence of measures $\mu_{r_{j}}$ weakly converges to a measure $\mu$ supported on $\partial K\cap \partial L$, and any such limiting measure satisfies $\int_{\partial K} x \otimes n_K(x)\, d\mu(x) = \frac{I_n}{n}$.
\end{theorem}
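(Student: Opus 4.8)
The plan is to follow the argument of Artstein-Avidan and Katzin \cite[Theorem 1.5]{AK}: one reduces the asymptotic statement to the non-asymptotic decomposition of the identity of Theorem \ref{gen_max_int_pos}, applied to each of the slightly-too-large maximal-intersection images $L^{(r)}:=T_rL+z_r$, and then passes to the limit. \emph{Step 1: the images converge to a maximal-volume position.} As $r\searrow 1$, the bodies $L^{(r)}$ subconverge in the Hausdorff metric to a maximal-volume image $L_0=T_0L+z_0$ of $L$ in $K$. Boundedness of $(T_r,z_r)$ for $r$ near $1$ follows exactly as in the existence proof for maximal intersection position given above (a blow-up of a singular value of $T_r$, or of $\|z_r\|$, forces $\vol(L^{(r)}\cap K)\to 0$). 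Testing against the competitor $r^{1/n}L$ --- feasible since $\det(r^{1/n}I)=r$, and satisfying $r^{1/n}L\supset L$ because $0\in\intr L$, whence $\vol(r^{1/n}L\cap K)\ge\vol(L\cap K)=\vol(L)$ --- gives $\vol(L^{(r)}\cap K)\to\vol(L)$, while $\vol(L^{(r)}\cap K)\le\vol(L^{(r)})=r\vol(L)$; so by continuity of volume and of intersection, any Hausdorff limit $L_0$ satisfies $\vol(L_0\cap K)=\vol(L_0)=\vol(L)$, i.e. $L_0\subset K$ is a maximal-volume image. (When this position is unique, $L_0=L$; otherwise the support statement below is to be read with $\partial L$ replaced by $\partial L_0$.)

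\emph{Step 2: extract the limit measure and locate its support.} The $\mu_r$ are probability measures on the compact set $\partial K$, so every sequence $r_j\searrow 1$ has a subsequence with $\mu_{r_j}\rightharpoonup\mu$ for some probability measure $\mu$ on $\partial K$; passing to a further subsequence, $L^{(r_j)}\to L_0$ as in Step 1. Since $\mu_r$ is the normalized restriction of $\mathcal H^{n-1}$ to $\partial K\cap L^{(r)}$, and for each $\varepsilon>0$ one has $\partial K\cap L^{(r_j)}\subset\{x\in\partial K:\operatorname{dist}(x,L_0)\le\varepsilon\}$ for $j$ large, whose intersection over $\varepsilon>0$ is $\partial K\cap L_0=\partial K\cap\partial L_0$ (using $L_0\subset K$), we conclude $\operatorname{supp}\mu\subset\partial K\cap\partial L_0\subset\partial K\cap\partial L$.

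\emph{Step 3: the non-asymptotic identity and passage to the limit.} For $r$ near $1$ the pair $(L^{(r)},K)$ is in maximal intersection position: the $SL_n$-images of $L^{(r)}$, up to translation, are exactly the affine images of $L$ of volume $r\vol(L)$, and $L^{(r)}$ maximizes the intersection with $K$ among these. Moreover $\vol_{n-1}(\partial L^{(r)}\cap\partial K)=0$ by hypothesis, so Theorem \ref{gen_max_int_pos}, with the roles of its two bodies played by $L^{(r)}$ and $K$, gives
\begin{equation*}
\int_{\partial K\cap L^{(r)}}\hat n_K(x)\,d\mathcal H^{n-1}=0,\qquad \int_{\partial K\cap L^{(r)}}x\otimes\hat n_K(x)\,d\mathcal H^{n-1}=c'_rI_n,
\end{equation*}
with $c'_r>0$ (take traces: $\langle x,\hat n_K(x)\rangle=h_K(\hat n_K(x))>0$). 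Dividing by $Z_r:=\mathcal H^{n-1}(\partial K\cap L^{(r)})$ yields $\int\hat n_K\,d\mu_r=0$ and $\int x\otimes\hat n_K\,d\mu_r=c_rI_n$, and the trace relation $nc_r=\int\langle x,\hat n_K(x)\rangle\,d\mu_r$ gives $c_r\in[r_0/n,\,R/n]$ for $r_0B^n_2\subset K\subset RB^n_2$; so along the subsequence I may also assume $c_{r_j}\to c\in(0,\infty)$. To pass to the limit in these integrals I invoke the standard extension of the portmanteau theorem: the bounded Borel map $x\mapsto x\otimes\hat n_K(x)$ is continuous at every differentiability point of $\partial K$, so it suffices that $\mu$ charges no non-differentiability point of $\partial K$; since $\operatorname{supp}\mu\subset\partial K\cap\partial L_0$, this (together with uniform control of $x\otimes\hat n_K$ on $\operatorname{supp}\mu_{r_j}$ near each contact point) is established exactly as in \cite{AK}, using that $L_0\subset K$ touches $\partial K$ precisely along that set and that $L^{(r)}$ protrudes from $K$ only in shrinking neighbourhoods of it. Granting this, $\int\hat n_K\,d\mu=0$ and $\int x\otimes\hat n_K(x)\,d\mu=cI_n$, and the normalization (forced by $\mu$ being a probability measure, via the trace) gives $\int_{\partial K}x\otimes\hat n_K(x)\,d\mu(x)=\tfrac1nI_n$.

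The only non-formal point is this passage to the limit through the discontinuous integrand $x\otimes\hat n_K(x)$: weak-$*$ convergence of the $\mu_r$ does not by itself pass through it. This is the main obstacle, overcome --- as in \cite{AK} --- by the concentration of the mass of $\mu_r$ on shrinking neighbourhoods of the contact set together with the local regularity of $\partial K$ there. Everything else --- the compactness, the Hausdorff convergence of the images, and the exact identity for each fixed $r$ furnished by Theorem \ref{gen_max_int_pos} --- is routine.
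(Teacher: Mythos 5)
Your proposal is correct and takes exactly the approach the paper intends: the paper offers no proof of this theorem beyond the remark that the argument of \cite[Theorem 1.5]{AK} applies verbatim, and your three steps --- Hausdorff subconvergence of $T_rL+z_r$ to a maximal-volume image, weak-$*$ extraction with localization of the support on the contact set, and passage to the limit in the fixed-$r$ identity supplied by Theorem \ref{gen_max_int_pos} --- are that argument written out (you also, correctly, read $\mu_r$ as supported on $\partial K\cap(T_rL+z_r)$ rather than on the complement as the statement literally has it). The one caveat you flag is real and worth recording: in \cite{AK} the integrand is the continuous map $u\mapsto u\otimes u$ on $S^{n-1}$, so weak convergence passes through it for free, whereas here $x\mapsto x\otimes \hat n_K(x)$ is discontinuous wherever $\partial K$ fails to be smooth, and the concentration-plus-local-regularity argument you sketch is genuinely needed --- it is the only point at which ``the same exact proof'' requires supplementation.
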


\bibliographystyle{amsplain}

\end{document}